\DeclareSymbolFontAlphabet{\mathbb}{AMSb}
\DeclareSymbolFontAlphabet{\mathbbl}{bbold}
\numberwithin{equation}{section}
\newtheorem{notation}[theorem]{Notation}
\newcommand{\bs}{\backslash}
\newcommand{\ceil}[1]{\left \lceil #1 \right \rceil }
\newcommand{\meet}{\wedge}
\newcommand{\join}{\vee}
\newcommand{\proves}{\vdash}
\newcommand{\satisfies}{\vDash}
\renewcommand\AA{{\mathcal A}}
\newcommand\EE{{\mathcal E}}
\newcommand\LL{{\mathcal L}}
\newcommand\MM{{\mathcal M}}
\newcommand\QQ{{\mathcal Q}}
\newcommand\bbzero{\mathbbl 0} 
\newcommand\bbone{\mathbbl 1}  
\newcommand\acl{\hbox{\rm acl}}
\newcommand\dcl{\hbox{\rm dcl}}
\newcommand\<{\langle}
\renewcommand\>{\rangle}
\newcommand\fraisse{Fra\"\i ss\'e }
\newcommand{\tp}{\mathrm{tp}}
\fi\ProvidesPackage{diagrams}[2014/12/31 v3.94 Paul Taylor's commutative
diagrams]
\else\message{WARNING: the \string\diagram\space
command is already defined and will not be loaded again}\expandafter\endinput
\edef\cdrestoreat{
\noexpand\catcode`\noexpand\@=\the\catcode`\@
\noexpand\catcode`\noexpand\#=\the\catcode`\#
\noexpand\catcode`\noexpand\$=\the\catcode`\$
\noexpand\catcode`\noexpand\<=\the\catcode`\<
\noexpand\catcode`\noexpand\>=\the\catcode`\>
\noexpand\catcode`\noexpand\:=\the\catcode`\:
\noexpand\catcode`\noexpand\;=\the\catcode`\;
\noexpand\catcode`\noexpand\!=\the\catcode`\!
\noexpand\catcode`\noexpand\?=\the\catcode`\?
\noexpand\catcode`\noexpand\+=\the\catcode'53
}\catcode`\@=11 \catcode`\#=6 \catcode`\<=12 \catcode`\>=12 \catcode'53=12
\let\diagram@help@messages y\fi
\def\cdps@Rokicki#1{\special{ps:#1}}\let\cdps@dvips\cdps@Rokicki\let
\let\CD@HB\cdps@Rokicki\let\CD@IK\cdps@Rokicki
\let\CD@HB\cdps@Rokicki
\def\cdps@Bechtolsheim#1{\special{dvitps: Literal "#1"}}%
\let\cdps@dvitps\cdps@Bechtolsheim\let\cdps@IntegratedComputerSystems
\def\cdps@Clark#1{\special{dvitops: inline #1}}
\let\cdps@dvitops\cdps@Clark
\let\cdps@OzTeX\empty\let\cdps@oztex\empty\let\cdps@Trevorrow\empty
\def\cdps@Coombes#1{\special{ps-string #1}}
\def\CD@DE{\global\let}\def\CD@RH{\outer\def}
\xdef\CD@o{\string\{}\xdef\CD@yC{\string\}}
\xdef\CD@S{\string\&}
\xdef\CD@nC{\string\$}\gdef\CD@LG{$$}
\gdef\CD@uG{^^J}
\gdef\CD@uG{^^M}
\gdef\CD@uG{^^J}
\mathchardef\lessthan='30474 \mathchardef\greaterthan='30476
\font\tenln=line10\relax
\let\tenlnw\nullfont\else
\font\tenlnw=linew10\relax
\def\cd@shouldnt#1{\CD@KB{* THIS (#1) SHOULD NEVER HAPPEN! *}}
\def\get@round@pair#1(#2,#3){#1{#2}{#3}}
\def\get@square@arg#1[#2]{#1{#2}}
\def\CD@AE#1{\CD@PK\let\CD@DH\CD@@E\CD@@E#1,],}
\def\CD@m{[}\def\CD@RD{]}\def\commdiag#1{{\let\enddiagram\relax\diagram[]#1%
\enddiagram}}
\def\CD@BF{{\ifx\CD@EH[\aftergroup\get@square@arg\aftergroup\CD@YH\else
\aftergroup\CD@JH\fi}}
\def\CD@CF#1#2{\def\CD@YH{#1}\def\CD@JH{#2}\futurelet\CD@EH\CD@BF}
\def\CD@KK{|}
\def\CD@PB{
\tokcase\CD@DD:\CD@y\break@args;\catcase\@super:\upper@label;\catcase\CD@lJ:%
\lower@label;\tokcase{~}:\middle@label;
\tokcase<:\CD@iF;
\tokcase>:\CD@iI;
\tokcase(:\CD@BC;
\tokcase[:\optional@;
\tokcase.:\CD@JJ;
\catcase\space:\eat@space;\catcase\bgroup:\positional@;\default:\CD@@A
\break@args;\endswitch}
\def\switch@arg{
\catcase\@super:\upper@label;\catcase\CD@lJ:\lower@label;\tokcase[:\optional@
;
\tokcase.:\CD@JJ;
\catcase\space:\eat@space;\catcase\bgroup:\positional@;\tokcase{~}:%
\middle@label;
\default:\CD@y\break@args;\endswitch}
\let\CD@tJ\relax\ifx\protect\CD@qK\let\protect\relax\fi\ifx\AtEndDocument
\def\CD@PG{\CD@gB}\def\CD@GF#1#2{}\else\def\CD@PG#1{\edef\CD@CH{#1}%
\expandafter\CD@oC\CD@CH\CD@OD}\def\CD@oC#1\CD@OD{\AtEndDocument{\typeout{%
\CD@tA: #1}}}\def\CD@GF#1#2{\gdef#1{#2}\AtEndDocument{#1}}\fi\def\CD@ZA#1#2{%
\def#1{\CD@PG{#2\CD@mD\CD@W}\CD@DE#1\relax}}\def\CD@uF#1\repeat{\def\CD@p{#1}%
\CD@OF}\def\CD@OF{\CD@p\relax\expandafter\CD@OF\fi}\def\CD@sF#1\repeat{\def
\CD@q{#1}\CD@PF}\def\CD@PF{\CD@q\relax\expandafter\CD@PF\fi}\def\CD@tF#1%
\def\CD@QF{\CD@r\relax\expandafter\CD@QF\fi}\def
\def\CD@rG#1#2{\csname newtoks\endcsname#1#1=%
\expandafter{\csname#2\endcsname}}\else\csname newtoks\endcsname\no@cd@help
\def\CD@rG#1#2{\let#1\no@cd@help}\fi\chardef\CD@lF
\chardef\CD@lI=2 \chardef\CD@MH=5 \chardef\CD@tH=6 \chardef\CD@sH=7
\chardef\CD@PC=9 \dimendef\CD@hI=2 \dimendef\CD@hF=3 \dimendef\CD@mF=4
\def\sdef#1#2{\def#1{#2}%
}\def\CD@L#1{\expandafter\aftergroup\csname#1\endcsname}\def\CD@RC#1{%
\expandafter\def\csname#1\endcsname}\def\CD@sD#1{\expandafter\gdef\csname#1%
\endcsname}\def\CD@vC#1{\expandafter\edef\csname#1\endcsname}\def\CD@nF#1#2{%
\expandafter\let\csname#1\expandafter\endcsname\csname#2\endcsname}\def\CD@EE
\def\CD@AK#1{\csname#1\endcsname}\def\CD@XJ#1{\expandafter\show\csname#1%
\endcsname}\def\CD@ZJ#1{\expandafter\showthe\csname#1\endcsname}\def\CD@WJ#1{%
\expandafter\showbox\csname#1\endcsname}\def\CD@tA{Commutative Diagram}\edef
\edef\CD@dC{\string\diagram}\edef\CD@HD{\string\enddiagram
}\edef\CD@EC{\string\\}\def\CD@eF{LaTeX}\ifx\@ignoretrue\CD@qK\expandafter
\def\@ignoretrue{%
\global\ignore@true}\def\@ignorefalse{\global\ignore@false}\fi
\def\CD@g{{\ifnum0=`}\fi}\def\CD@wC{\ifnum0=`{\fi}}\def\catcase#1:{\ifcat
\noexpand\CD@EH#1\CD@tJ\expandafter\CD@kC\else\expandafter\CD@dJ\fi}\def
\def\CD@kC#1;#2\endswitch{#1}\def\CD@dJ#1;{}\let\endswitch\relax\def\default:%
\def\at@{@}\fi\edef\CD@P{\CD@o pt\CD@yC}%
\lTo\sp{#1}\sb{#2}\CD@z}\CD@RC{\CD@P)}#1)#2){\CD@z\rTo\sp{#1}\sb{#2}\CD@z}%
\def\CD@O{\def\endCD{\enddiagram}\CD@RC{\CD@P A}##1A##2A{\uTo<{##1}>{##2}%
\CD@z\CD@z}\CD@RC{\CD@P V}##1V##2V{\dTo<{##1}>{##2}\CD@z\CD@z}\CD@RC{\CD@P=}{%
\CD@z\hEq\CD@z}\CD@RC{\CD@P\CD@KK}{\vEq\CD@z\CD@z}\CD@RC{\CD@P\string\vert}{%
\vEq\CD@z\CD@z}\CD@RC{\CD@P.}{\CD@z\CD@z}\let\CD@z\CD@Q}\def\CD@IE{\let\tmp
\CD@JE\ifcat A\noexpand\CD@CH\else\ifcat=\noexpand\CD@CH\else\ifcat\relax
\noexpand\CD@CH\else\let\tmp\at@\fi\fi\fi\tmp}\def\CD@JE#1{\CD@nF{tmp}{\CD@P
\string#1}\ifx\tmp\relax\def\tmp{\at@#1}\fi\tmp}\def\CD@z{}\begingroup
\def\aftergroup\CD@T\aftergroup{\aftergroup\def\catcode`\@\active
\aftergroup @\endgroup{\futurelet\CD@CH\CD@IE}}\newcount\CD@uA\newcount\CD@vA
\newdimen\CD@OA\newdimen\CD@PA\CD@tG\CD@gE
\newdimen\CD@RA\newdimen\CD@SA\newcount
\newdimen\CD@QA\newbox\CD@DA\CD@tG\CD@lE\CD@dA\CD@bA
\def\CD@V#1#2{\ifdim#1<#2\relax#1=#2\relax\fi}%
\def\CD@X#1#2{\ifdim#1>#2\relax#1=#2\relax\fi}\newdimen\CD@XH\CD@XH=1sp
\newdimen\CD@zC\CD@zC\z@\def\CD@cJ{\ifdim\CD@zC=1em\else\CD@nJ\fi}\def\CD@nJ{%
\CD@zC1em\def\CD@NC{\fontdimen8\textfont3 }\CD@@J\CD@NJ\setbox0=\vbox{\CD@t
\noindent\CD@k\null\penalty-9993\null\CD@ND\null\endgraf\setbox0=\lastbox
\unskip\unpenalty\setbox1=\lastbox\global\setbox\CD@IG=\hbox{\unhbox0\unskip
\unskip\unpenalty\setbox0=\lastbox}\global\setbox\CD@KG=\hbox{\unhbox1\unskip
\unpenalty\setbox1=\lastbox}}}\newdimen\CD@@I\CD@@I=1true in \divide\CD@@I300
\def\CD@zH#1{\multiply#1\tw@\advance#1\ifnum#1<\z@-\else+\fi\CD@@I\divide#1%
\tw@\divide#1\CD@@I\multiply#1\CD@@I}\def\MapBreadth{\afterassignment\CD@gI
\CD@LF}\newdimen\CD@LF\newdimen\CD@oI\def\CD@gI{\CD@oI\CD@LF\CD@V\CD@@I{4%
\CD@XH}\CD@X\CD@@I\p@\CD@zH\CD@oI\ifdim\CD@LF>\z@\CD@V\CD@oI\CD@@I\fi\CD@cJ}%
\def\CD@RJ#1{\CD@zD\count@\CD@@I#1\ifnum\count@>\z@\divide\CD@@I\count@\fi
\CD@gI\CD@NJ}\def\CD@NJ{\dimen@\CD@QC\count@\dimen@\divide\count@5\divide
\count@\CD@@I\edef\CD@OC{\the\count@}}\def\CD@AJ{\CD@QJ\z@}\def\CD@QJ#1{%
\CD@tI\axisheight\advance\CD@tI#1\relax\advance\CD@tI-.5\CD@oI\CD@zH\CD@tI
\CD@sI-\CD@tI\advance\CD@tI\CD@LF}\newdimen\CD@DC\CD@DC\z@\newdimen\CD@eJ
\def\CD@CJ#1{\CD@sI#1\relax\CD@tI\CD@sI\advance\CD@tI\CD@LF\relax}%
\def\horizhtdp{height\CD@tI depth\CD@sI}\def\axisheight{\fontdimen22\the
\textfont\tw@}\def\script@axisheight{\fontdimen22\the\scriptfont\tw@}\def
\def\CD@NC{0.4pt}\def
\def\CD@UK{\fontdimen3\textfont\z@}\newdimen
\newdimen\CD@nA\CD@nA\z@\def\CD@RG{\ifincommdiag1.3em\else2em\fi}%
\newdimen\CD@YB\def\CellSize{\afterassignment\CD@kB\DiagramCellHeight}%
\newdimen\DiagramCellHeight\DiagramCellHeight-\maxdimen\newdimen
\def\CD@kB{\DiagramCellWidth
\DiagramCellHeight}\def\CD@QC{3em}\newdimen\MapShortFall\def\MapsAbut{%
\MapShortFall\z@\objectheight\z@\objectwidth\z@}\newdimen\CD@iA\CD@iA\z@
\fi\CD@nF{%
ifUglyObsoleteDiagrams}{relax}\newif\ifUglyObsoleteDiagrams\def\CD@nK{\CD@aB
\UglyObsoleteDiagramsfalse}\def\CD@oK{\CD@ZB\UglyObsoleteDiagramstrue}\CD@vE
\def\CD@sK{\ifx\pdfoutput
\CD@qK\else\ifx\pdfoutput\relax\else\ifnum\pdfoutput>\z@\CD@pK\fi\fi\fi} \def
\global\let\CD@oK\relax\global\let\CD@pK\relax\global\let\CD@sK
\def\CD@tK#1{}\ifx\pdfliteral\CD@qK\else\ifx
\let\CD@tK\pdfliteral\fi\fi\ifx\XeTeXrevision\CD@qK
\def\newarrowhead{\CD@mG h\CD@BG\CD@GG>}%
\def\newarrowtail{\CD@mG t\CD@BG\CD@GG>}\def\newarrowmiddle{\CD@mG m\CD@BG
\hbox@maths\empty}\def\newarrowfiller{\CD@mG f\CD@bE\CD@MK-}\def\CD@mG#1#2#3#%
\CD@ZA\CD@MC{\CD@eF\space diagonals are used unless
PostScript is set}\def\defaultarrowhead#1{\edef\CD@sJ{#1}\CD@@J}\def\CD@@J{%
\CD@IJ\CD@sJ<>ht\CD@IJ\CD@sJ<>th}\def\CD@IJ#1#2#3#4#5{\CD@HJ{r#4}{#3}{l#5}{#2%
}{r#4:#1}\CD@HJ{r#5}{#2}{l#4}{#3}{l#4:#1}\CD@HJ{d#4}{#3}{u#5}{#2}{d#4:#1}%
\CD@HJ{d#5}{#2}{u#4}{#3}{u#4:#1}}\def\CD@HJ#1#2#3#4#5{\begingroup\aftergroup
\CD@GJ\CD@L{#1+:#2}\CD@L{#1:#2}\CD@L{#3:#4}\CD@L{#5}\endgroup}\def\CD@GJ#1#2#%
\def\CD@sJ{}\CD@@J\def\CD@GJ#1#2#3#4{\setbox#1=#4}\ifx\tenln
\def\CD@sJ{vee}\else\let\CD@sJ\CD@eF\fi\def\CD@xF#1#2#3{\begingroup
\aftergroup\CD@wF\CD@L{#1#2:#3#3}\CD@L{#1#2:#3}\aftergroup\CD@yF\CD@L{#1#2:#3%
-#3}\CD@L{#1#2:#3}\endgroup}\def\CD@wF#1#2{\def#1{\hbox{\rlap{#2}\kern.4%
\CD@zC#2}}}\def\CD@yF#1#2{\def#1{\hbox{\rlap{#2}\kern.4\CD@zC#2\kern-.4\CD@zC
}}}\CD@xF lh>\CD@xF rt>\CD@xF rh<\CD@xF rt<\def\CD@yF#1#2{\def#1{\hbox{\kern-%
.4\CD@zC\rlap{#2}\kern.4\CD@zC#2}}}\CD@xF rh>\CD@xF lh<\CD@xF lt>\CD@xF lt<%
\def\CD@wF#1#2{\def#1{\vbox{\vbox to\z@{#2\vss}\nointerlineskip\kern.4\CD@zC#%
2}}}\def\CD@yF#1#2{\def#1{\vbox{\vbox to\z@{#2\vss}\nointerlineskip\kern.4%
\CD@zC#2\kern-.4\CD@zC}}}\CD@xF uh>\CD@xF dt>\CD@xF dh<\CD@xF dt<\def\CD@yF#1%
\def\CD@BG#1{\hbox{%
\mathsurround\z@\offinterlineskip\CD@k\mkern-1.5mu{#1}\mkern-1.5mu\CD@ND}}%
\def\hbox@maths#1{\hbox{\CD@k#1\CD@ND}}\def\CD@GG#1{\hbox to\CD@LF{\setbox0=%
\hbox{\offinterlineskip\mathsurround\z@\CD@k{#1}\CD@ND}\dimen0.5\wd0\advance
\dimen0-.5\CD@oI\CD@zH{\dimen0}\kern-\dimen0\unhbox0\hss}}\def\CD@sB#1{\hbox
to2\CD@LF{\hss\offinterlineskip\mathsurround\z@\CD@k{#1}\CD@ND\hss}}\def
\def\CD@bE#1{\hbox{\kern-.15%
\CD@zC\CD@k{#1}\CD@ND\kern-.15\CD@zC}}\def\CD@MK#1{\vbox{\offinterlineskip
\kern-.2ex\CD@GG{#1}\kern-.2ex}}\def\@fillh{\xleaders\vrule\horizhtdp}\def
\def\CD@@D{\hbox{\vrule height 1pt
depth-1pt width 1pt}}\CD@RC{rf:}{\CD@@D}\CD@nF{lf:}{rf:}\CD@nF{+f:}{rf:}%
\def\CD@BD{\CD@U\null
\CD@@D\null\CD@@D\null}\edef\CD@lG{\string\newarrow}\def\newarrow#1#2#3#4#5#6%
\edef\@name{#1}\edef\CD@oJ{#2}\edef\CD@iD{#3}\edef\CD@QG{#4}\edef
\edef\CD@LE{#6}\let\CD@HE\CD@sG\let\CD@FK\CD@BH\let\@x\CD@AH\ifx
\let\CD@oJ\empty\fi\ifx\CD@LE\CD@jD\let\CD@LE\empty\fi\def\CD@LI{%
r}\def\CD@SF{l}\def\CD@IC{d}\def\CD@yJ{u}\def\CD@gH{+}\def\@m{-}\ifx\CD@iD
\let\CD@QG\empty\fi\ifx\CD@LE\empty\ifx\CD@iD\CD@aE\let
\let\@x\CD@zG\fi\fi\else\edef\CD@a{\CD@iD\CD@oJ}\ifx\CD@a\empty
\let\CD@QG\empty\fi\fi\fi\ifmmode\aftergroup\CD@kG\else\CD@@A
\CD@b\CD@L{r\@name}\fi\fi\endgroup}\def\CD@sG{\CD@vG\CD@LI
\CD@SF rl\Horizontal@Map}\def\CD@BH{\CD@vG\CD@IC\CD@yJ du\Vertical@Map}\def
\def\CD@yG{\CD@vG\CD@gH\@m+-\Slant@Map}%
\def\CD@zG{\CD@vG\CD@gH\@m+-\Slope@Map}\catcode`\/=\active\def\CD@vG#1#2#3#4#%
\def\CD@jG#1#2#3#4//{\edef\CD@fG
{#2}\aftergroup\sdef\CD@L{#1\@name}\aftergroup{\aftergroup#3\CD@M#4//%
\aftergroup}}\def\CD@M#1/{\edef\CD@EH{#1}\ifx\CD@EH\empty\else\CD@L{\CD@fG#1}%
\expandafter\CD@M\fi}\catcode`\/=12 \def\CD@nG#1#2#3#4#5#6#7#8{\aftergroup
\sdef\CD@L{#6\@name}\aftergroup{\CD@L{#2\@name}\if#2#4\aftergroup\CD@CI\else
\aftergroup\CD@BI\fi\CD@L{#1\@name}%
\aftergroup(\aftergroup#3\aftergroup,\aftergroup#5\aftergroup)\aftergroup}}%
\def\CD@oB#1#2#3#4{\expandafter\ifx\csname#1#2:#4\endcsname\relax\CD@y\CD@gB{%
arrow#3 "#4" undefined}\fi}\CD@rG\CD@VE{All five components must be defined
before an arrow.}\CD@rG\CD@SE{\CD@lG, unlike \string\HorizontalMap, is a
declaration.}\def\CD@b#1{\CD@YA{Arrows \string#1 etc could not be defined}%
\CD@VE}\def\CD@kG{\CD@YA{misplaced \CD@lG}\CD@SE}\def\newdiagramgrid#1#2#3{%
\CD@RC{cdgh@#1}{#2,],}
\CD@RC{cdgv@#1}{#3,],}}
\def\CD@yH{\CD@VA6 }\def\CD@OB{\CD@VA1 \global\CD@yA1
\CD@DE\CD@YF\empty}\def\CD@YF{}\def\CD@nB#1{\relax\CD@MD\edef\CD@vJ{#1}%
\begingroup\CD@rE\else\ifcase\CD@VA\ifmmode\else\CD@YG\CD@E0\fi\or\CD@cE5\or
\CD@YG\CD@F5\or\CD@YG\CD@B5\or\CD@YG\CD@B5\or\CD@YG\CD@C5\or\CD@cE7\or\CD@YG
\CD@D7\fi\fi\endgroup\xdef\CD@YF{#1}}\def\CD@pB#1#2#3#4#5{\relax\CD@MD\xdef
\CD@vJ{#4}\begingroup\ifnum\CD@VA<#1 \expandafter\CD@cE\ifcase\CD@VA0\or#2\or
#3\else#2\fi\else\ifnum\CD@VA<6 \CD@tJ\CD@YG\CD@B#2\else\CD@YG\CD@G#2\fi\fi
\endgroup\CD@DE\CD@YF\CD@vJ\ifincommdiag\let\CD@ZD#5\else\let\CD@ZD\CD@LK\fi}%
\def\CD@yI{\global\CD@yA=\ifnum\CD@VA<5 1\else2\fi\relax}\def\CD@OI{\CD@VA
\CD@yA}\def\CD@cE#1{\aftergroup\CD@VA\aftergroup#1\aftergroup\relax}\def
\let\CD@yI\relax\let\CD@OI\relax}\def\CD@FH#1#2#3#4#5{\ifincommdiag\let\CD@ZD
#5\else\xdef\CD@vJ{#4}\let\CD@ZD\CD@LK\fi}\def\CD@YG#1{\aftergroup#1%
\aftergroup\relax\CD@cE}\def\CD@B{\CD@YE\CD@S\CD@ME\CD@Q}\def\CD@G{\CD@YE{%
\CD@yC\CD@S}\CD@XE\CD@QD\CD@Q}\def\CD@F{\CD@YE{*\CD@S}\CD@RE\clubsuit\CD@Q}%
\def\CD@C{\CD@YE{\CD@S*\CD@S}\CD@RE\CD@Q\clubsuit\CD@Q}\def\CD@D{\CD@YE\CD@EC
\CD@TE\\}\def\CD@E{\CD@YE\CD@nC\CD@QE\CD@k}\def\CD@LK{\CD@YA{\CD@vJ\space
ignored \CD@dH}\CD@WE}\def\CD@FE{}\def\CD@d{\CD@YA{maps must never be enclosed
in braces}\CD@OE}\def\CD@dH{outside diagram}\def\CD@FC{\string\HonV, \string
\VonH\space and \string\HmeetV}\CD@rG\CD@ME{The way that horizontal and
vertical arrows are terminated implicitly means\CD@uG that they cannot be
mixed with each other or with \CD@FC.}\CD@rG\CD@XE{\string\pile\space is for
parallel horizontal arrows; verticals can just be put together in\CD@uG a cell%
. \CD@FC\space are not meaningful in a \string\pile.}\CD@rG\CD@RE{The
horizontal maps must point to an object, not each other (I've put in\CD@uG one
which you're unlikely to want). Use \string\pile\space if you want them
parallel.}\CD@rG\CD@TE{Parallel horizontal arrows must be in separate layers
of a \string\pile.}\CD@rG\CD@QE{Horizontal arrows may be used \CD@dH s, but
must still be in maths.}\CD@rG\CD@WE{Vertical arrows, \CD@FC\space\CD@dH s don%
't know where\CD@uG where to terminate.}\CD@rG\CD@OE{This prevents them from
stretching correctly.}\def\CD@YE#1{\CD@YA{"#1" inserted \ifx\CD@YF\empty
before \CD@vJ\else between \CD@YF\ifx\CD@YF\CD@vJ s\else\space and \CD@vJ\fi
\fi}}\count@=\year\multiply\count@12 \advance\count@\month\ifnum\count@>24247
\def
\def\CD@TJ{\CD@GB-%
9999 \let\CD@ZD\CD@XD\ifincommdiag\else\CD@cJ\ifinpile\else\skip2\z@ plus 1.5%
\CD@VK minus .5\CD@UK\skip4\skip2 \fi\fi\let\CD@kD\@fillh\CD@nF{fill@dot}{rf:%
.}}\def\Vector@Map{\CD@HK4}\def\Slant@Map{\CD@HK{\CD@EF255\else6\fi}}\def
\def\CD@HK#1#2#3#4#5#6{\CD@LC\def\CD@WK{2}\def\CD@aK{%
2}\def\CD@ZK{1}\def\CD@bK{1}\let\Horizontal@Map\CD@nI\def\CD@OG{#1}\def\CD@NI
{\CD@U#2#3#4#5#6}}\def\CD@nI{\CD@TJ\CD@JB\let\CD@ZD\CD@TD\CD@qD}\CD@tG\CD@pE
\def\cds@missives{\CD@rA}\def\CD@TD{\CD@vE\let\CD@OG\CD@OC
\CD@x\CD@zE\CD@WF\fi\setbox0\hbox{\incommdiagfalse\CD@HI}\CD@pE\CD@aD\else
\global\CD@YC\CD@bD\fi\ifvoid6 \ifvoid7 \CD@eE\fi\fi\CD@zE\else\CD@BD\global
\CD@YC\let\CD@CG\CD@IH\CD@YD\fi\else\CD@NI\CD@MI\global\CD@YC\CD@YD\fi}\def
\def\CD@U#1#2#3#4#5{\let\CD@oJ#1\let\CD@iD#2\let\CD@QG#3%
\let\CD@jD#4\let\CD@LE#5\CD@TB\ifx\CD@iD\CD@jD\CD@UB\fi}\def\CD@qD#1#2#3#4#5{%
\CD@U#1#2#3#4#5\CD@tD}\def\Vertical@Map{\CD@pB433{vertical map}\CD@cD\CD@LC
\CD@GB-9995 \let\CD@kD\@fillv\CD@nF{fill@dot}{df:.}\CD@qD}\def\break@args{%
\def\CD@tD{\CD@ZD}\CD@ZD\endgroup\aftergroup\CD@FE}\def\CD@MJ{\setbox1=\CD@oJ
\setbox5=\CD@LE\ifvoid3 \ifx\CD@QG\null\else\setbox3=\CD@QG\fi\fi\CD@@G2%
\CD@iD\CD@@G4\CD@jD}\def\CD@pF#1{\ifvoid1\else\CD@oF1#1\fi\ifvoid2\else\CD@oF
2#1\fi\ifvoid3\else\CD@oF3#1\fi\ifvoid4\else\CD@oF4#1\fi\ifvoid5\else\CD@oF5#%
1\fi} \def\CD@oF#1#2{\setbox#1\vbox{\offinterlineskip\box#1\dimen@\prevdepth
\advance\dimen@-#2\relax\setbox0\null\dp0\dimen@\ht0-\dimen@\box0}}\def\CD@@G
\CD@ZA\CD@BK{\string\HorizontalMap, \string\VerticalMap\space and
\string\DiagonalMap\CD@uG are obsolete - use \CD@lG\space to pre-define maps}%
\def\HorizontalMap#1#2#3#4#5{\CD@BK\CD@nB{old horizontal map}\CD@LC\CD@TJ\def
\CD@oJ{\CD@UH{#1}}\CD@SH\CD@iD{#2}\def\CD@QG{\CD@UH{#3}}\CD@SH\CD@jD{#4}\def
\CD@LE{\CD@UH{#5}}\CD@tD}\def\VerticalMap#1#2#3#4#5{\CD@BK\CD@pB433{vertical
map}\CD@cD\CD@LC\CD@GB-9995 \let\CD@kD\@fillv\def\CD@oJ{\CD@GG{#1}}\CD@VH
\CD@iD{#2}\def\CD@QG{\CD@GG{#3}}\CD@VH\CD@jD{#4}\def\CD@LE{\CD@GG{#5}}\CD@tD}%
\def\DiagonalMap#1#2#3#4#5{\CD@BK\CD@LC\def\CD@OG{4}\let\CD@kD\CD@qK\let
\CD@ZD\CD@YD\def\CD@WK{2}\def\CD@aK{2}\def\CD@ZK{1}\def\CD@bK{1}\def\CD@QG{%
\CD@vF{#3}}\ifPositiveGradient\let\mv\raise\def\CD@oJ{\CD@vF{#5}}\def\CD@iD{%
\CD@vF{#4}}\def\CD@jD{\CD@vF{#2}}\def\CD@LE{\CD@vF{#1}}\else\let\mv\lower\def
\CD@oJ{\CD@vF{#1}}\def\CD@iD{\CD@vF{#2}}\def\CD@jD{\CD@vF{#4}}\def\CD@LE{%
\CD@vF{#5}}\fi\CD@tD}\def\CD@aE{-}\def\CD@AD{\empty}\def\CD@SH{\CD@EG\CD@bE
\CD@aE\@fillh}\def\CD@VH{\CD@EG\CD@MK\CD@KK\@fillv}\def\CD@EG#1#2#3#4#5{\def
\CD@CH{#5}\ifx\CD@CH#2\let#4#3\else\let#4\null\ifx\CD@CH\empty\else\ifx\CD@CH
\CD@AD\else\let#4\CD@CH\fi\fi\fi}\def\CD@UH#1{\hbox{\mathsurround\z@
\offinterlineskip\def\CD@CH{#1}\ifx\CD@CH\empty\else\ifx\CD@CH\CD@AD\else
\CD@k\mkern-1.5mu{\CD@CH}\mkern-1.5mu\CD@ND\fi\fi}}\def\CD@yD#1#2{\setbox#1=%
\hbox\bgroup\setbox0=\hbox{\CD@k\labelstyle()\CD@ND}
\setbox1=\null\ht1\ht0\dp1\dp0\box1 \kern.1\CD@zC\CD@k\bgroup\labelstyle
\aftergroup\CD@LD\CD@xD}\def\CD@LD{\CD@ND\kern.1\CD@zC\egroup\CD@tD}\def
\def\CD@mJ{
\catcase\bgroup:\CD@v;\catcase\egroup:\missing@label;\catcase\space:\CD@TF;%
\tokcase[:\CD@XF;
\default:\CD@zJ;\endswitch}\def\CD@v{\let\CD@MD\CD@c\let\CD@CH}\def\CD@zJ#1{%
\let\CD@UF\egroup{\let\actually@braces@missing@around@macro@in@label\CD@ZH
\let\CD@MD\CD@xC\let\CD@UF\CD@VF#1%
\actually@braces@missing@around@macro@in@label}\CD@UF}\def
\def\missing@label
\egroup\CD@YA{missing label}\CD@PE}\def\CD@xC{\egroup\missing@label}\outer
\def\CD@ZH{}\def\CD@UF{}\def\CD@VF{\CD@wC\CD@UF}\def\CD@MD{}\def\CD@XF{\let
\CD@N\CD@xD\get@square@arg\CD@AE}\CD@rG\CD@PE{The text which has just been
read is not allowed within map labels.}\def\CD@c{\egroup\CD@YA{missing \CD@yC
\space inserted after label}\CD@PE}\def\upper@label{\CD@oD\CD@yD6}\def
\def\middle@label{%
\CD@yD3}\CD@tG\CD@yE\CD@pD\CD@oD\def\CD@iF{\ifPositiveGradient\CD@tJ
\expandafter\upper@label\else\expandafter\lower@label\fi}\def\CD@iI{%
\ifPositiveGradient\CD@tJ\expandafter\lower@label\else\expandafter
\upper@label\fi}\def\positional@{\CD@gB{labels as positional arguments are
obsolete}\CD@yE\CD@tJ\expandafter\upper@label\else\expandafter\lower@label\fi
-}\def\CD@tD{\futurelet\CD@EH\switch@arg}\def\eat@space{\afterassignment
\CD@tD\let\CD@EH= }\def\CD@TF{\afterassignment\CD@xD\let\CD@EH= }\def\CD@BC{%
\get@round@pair\CD@uD}\def\CD@uD#1#2{\def\CD@WK{#1}\def\CD@aK{#2}\CD@tD}\def
\def\CD@JJ.{\CD@sC\CD@tD}\def
\def\CD@MI{}\def\CD@@E#1,{\CD@nH#1,\begingroup\ifx\@name\CD@RD
\CD@FF\aftergroup\CD@e\fi\aftergroup\CD@jC\else\expandafter\def\expandafter
\CD@RF\expandafter{\csname\@name\endcsname}\expandafter\CD@vD\CD@RF\CD@KD\ifx
\CD@RF\empty\aftergroup\CD@pC\expandafter\aftergroup\csname\CD@FB\@name
\endcsname\expandafter\aftergroup\csname\CD@FB @\@name\endcsname\else\gdef
\CD@GE{#1}\CD@gB{\string\relax\space inserted before `[\CD@GE'}\message{(I was
trying to read this as a \CD@tA\ option.)}\aftergroup\CD@H\fi\fi\endgroup}%
\def\CD@vD#1#2\CD@KD{\def\CD@RF{#2}}\def\CD@jC{\let\CD@CH\CD@N\let\CD@N\relax
\CD@CH}\def\CD@H#1],{
\CD@jC\relax\def\CD@RF{#1}\ifx\CD@RF\empty\def\CD@RF{[\CD@GE]}%
\else\def\CD@RF{[\CD@GE,#1]}
\fi\CD@RF}\def\CD@pC#1#2{\ifx#2\CD@qK\ifx#1\CD@qK\CD@gB{option `\@name'
undefined}\else#1\fi\else\CD@FF\expandafter#2\CD@GK\CD@PK\else\CD@QK\fi\fi
\CD@DH}\CD@tG\CD@FF\CD@QK\CD@PK\def\CD@nH#1,{\CD@FF\ifx\CD@GK\CD@qK\CD@e\else
\expandafter\CD@oH\CD@GK,#1,(,),(,)[]%
\fi\fi\CD@FF\else\CD@mH#1==,\fi}\def\CD@e{\CD@gB{option `\@name' needs (x,y)
value}\CD@PK\let\@name\empty}\def\CD@mH#1=#2=#3,{\def\@name{#1}\def\CD@GK{#2}%
\def\CD@RF{#3}\ifx\CD@RF\empty\let\CD@GK\CD@qK\fi}%
\def\CD@oH#1(#2,#3)#4,(#5,#6)#7[]{\def\CD@GK{{#2}{#3}}\def\CD@RF{#1#4#5#6}%
\ifx\CD@RF\empty\def\CD@RF{#7}\ifx\CD@RF\empty\CD@e\fi\else\CD@e\fi}\def
\let\CD@N\relax\def\CD@zD#1{\ifx\CD@GK\CD@qK\CD@gB{option `\@name
' needs a value}\else#1\CD@GK\relax\fi}\def\CD@BE#1#2{\ifx\CD@GK\CD@qK#1#2%
\relax\else#1\CD@GK\relax\fi}\def\cds@@showpair#1#2{\message{x=#1,y=#2}}\def
\def\CD@DI#1{\def\CD@CH
{#1}\CD@nF{@x}{cdps@#1}\ifx\CD@CH\empty\CD@f\CD@CH{cannot be used}\else\ifx
\CD@CH\relax\CD@f\CD@CH{unknown}\else\let\CD@IK\@x\fi\fi}\def\CD@f#1#2{\CD@gB
{PostScript translator `#1' #2}}\def\CD@PH{}\def\CD@PJ{\CD@fA\edef\CD@PH{%
\noexpand\CD@KB{\@name\space ignored within maths}}}\def\diagramstyle{\CD@cJ
\let\CD@N\relax\CD@CF\CD@AE\CD@AE}\CD@tG\CD@sE
\CD@hG\CD@RC{cds@ }{}\CD@RC{cds@}{}\CD@RC
\def\cds@abut{\MapsAbut\dimen1\z@
\dimen5\z@}\def\cds@alignlabels{\CD@IA\CD@KA}\def\cds@amstex{\ifincommdiag
\CD@O\else\def\CD{\diagram[amstex]}
\fi\CD@T\catcode`\@\active}\def\cds@b{\let\CD@dB\CD@bB}\def\cds@balance{\let
\CD@hA\CD@AA}\let\cds@bottom\cds@b\def\cds@center{\cds@vcentre\cds@nobalance}%
\let\cds@centre\cds@center\def\cds@centerdisplay{\CD@HA\CD@PJ\cds@balance}%
\let\cds@centredisplay\cds@centerdisplay\def\cds@crab{\CD@BE\CD@DC{.5%
\PileSpacing}}\CD@RC{cds@crab-}{\CD@DC-.5\PileSpacing}\CD@RC{cds@crab+}{%
\def\cds@defaultsize{\CD@BE{\let\CD@QC}{3em}\CD@NJ
}\def\cds@displayoneliner{\CD@DB}\let\cds@dotted\CD@sC\def\cds@dpi{\CD@RJ{1%
truein}}\def\cds@dpm{\CD@RJ{100truecm}}\let\CD@XA\CD@qK\def\cds@eqno{\let
\CD@XA\CD@GK\let\CD@EJ\empty}\def\cds@fixed{\CD@qA}\CD@tG\CD@fE\CD@J\CD@I\def
\def\cds@gap
\CD@sI\CD@BE{\wd3=}\MapShortFall} \def
\relax\CD@gB{%
unknown grid `\CD@GK'}\else\CD@WB\fi\fi}\let\h@grid\relax\let\v@grid\relax
\def\cds@gridx{\ifx\CD@GK\CD@qK\else\cds@grid\fi\let\CD@CH\h@grid\let\h@grid
\v@grid\let\v@grid\CD@CH}\def\cds@h{\CD@zD\DiagramCellHeight}\def\cds@hcenter
\let\CD@hA\CD@aA}\let\cds@hcentre\cds@hcenter\def\cds@heads{\CD@BE{\let
\CD@sJ}\CD@sJ\CD@@J\CD@vE\else\ifx\CD@sJ\CD@eF\else\CD@MC\fi\fi}\let
\let\cds@hmiddle\cds@balance\def\cds@htriangleheight{\CD@BE
\DiagramCellHeight\DiagramCellHeight\DiagramCellWidth1.73205%
\DiagramCellHeight}\def\cds@htrianglewidth{\CD@BE\DiagramCellWidth
\DiagramCellWidth\DiagramCellHeight.57735\DiagramCellWidth}\CD@tG\CD@zE\CD@eE
\def\cds@hug{\CD@eE} \def\cds@inline{\CD@gA\let\CD@PH\empty}\def
\def\cds@labelstyle{\CD@zD{\let\labelstyle}}\def\cds@landscape{\CD@kA}\def
\let\CD@EJ\empty\def\CD@FJ{\refstepcounter{%
equation}\def\CD@XA{\hbox{\@eqnnum}}}\def\cds@LaTeXeqno{\let\CD@EJ\CD@FJ}\def
\def\cds@leftflush{\cds@flushleft\CD@J}\def
\def\cds@lowershortfall{%
\ifPositiveGradient\cds@leftshortfall\else\cds@rightshortfall\fi}\def
\def\cds@midhshaft{\CD@JA}\def\cds@midshaft{\CD@JA}\def
\def\cds@moreoptions{\CD@@A}\let\cds@nobalance
\def\cds@nohcheck{\CD@HH}\def\cds@nohug{\CD@dE} \def
\let\cds@noorigin\cds@nobalance\def
\def\cds@UO{\CD@oK\global\let\CD@n\empty}%
\def\cds@UglyObsolete{\cds@UO\let\cds@PS\empty}\def\CD@rK#1{\CD@gB{option `#1%
' renamed as `UglyObsolete'}}\def\cds@noPostScript{\CD@rK{noPostScript}}\def
\def\cds@notextflow{\CD@RB}\def\cds@noTPIC{%
\CD@CK}\def\cds@objectstyle{\CD@zD{\let\objectstyle}}\def\cds@origin{\let
\CD@hA\CD@iB}\def\cds@p{\CD@zD\PileSpacing}\let\cds@pilespacing\cds@p\def
\def\cds@portrait{\CD@jA}\def
\def\cds@PS{%
\CD@nK\global\let\CD@n\empty}\CD@GF\CD@n{\typeout{\CD@tA: try the PostScript
option for better results}}\def\cds@repositionpullbacks{\let\make@pbk\CD@fH
\let\CD@qH\CD@pH}\def\cds@righteqno{\CD@oA}\def\cds@rightshortfall{\CD@zD{%
\dimen5 }}\def\cds@ruleaxis{\CD@zD{\let\axisheight}}\def\cds@cmex{\let\CD@GG
\CD@sB\let\CD@QJ\CD@CJ}\def\cds@s{\cds@height\DiagramCellWidth
\DiagramCellHeight}\def\cds@scriptlabels{\let\labelstyle\scriptstyle}\def
\def\cds@showfirstpass{\CD@BE{\let\CD@nD}\z@}\def\cds@silent{\def\CD@KB##1{}%
\def\CD@gB##1{}}\let\cds@size\cds@s\def\cds@small{\CellSize2\CD@zC}\def
\def\cds@t{\let\CD@dB\CD@fB}\def\cds@textflow{%
\CD@SB\CD@PJ}\def\cds@thick{\let\CD@rF\tenlnw\CD@LF\CD@NC\CD@BE\MapBreadth{2%
\CD@LF}\CD@@J}\def\cds@thin{\let\CD@rF\tenln\CD@BE\MapBreadth{\CD@NC}\CD@@J}%
\def\cds@tight{\CD@WB}\let\cds@top\cds@t\def\cds@TPIC{\CD@DK}\def
\def\cds@vcenter{\let\CD@dB\CD@cB}\let\cds@vcentre
\def\cds@vtriangleheight{\CD@BE\DiagramCellHeight
\DiagramCellHeight\DiagramCellWidth.577035\DiagramCellHeight}\def
\def\cds@vmiddle{\let\CD@dB\CD@eB}%
\def\cds@w{\CD@zD\DiagramCellWidth}\let\cds@width\cds@w\def\diagram{\relax
\protect\CD@bC}\def\enddiagram{\protect\CD@SG}\def\CD@bC{\CD@g\CD@uI
\incommdiagtrue\edef\CD@wI{\the\CD@NB}\global\CD@NB\z@\boxmaxdepth\maxdimen
\everycr{}\CD@sK\everymath{}\everyhbox{}\ifx\pdfsyncstop\CD@qK\else
\pdfsyncstop\fi\CD@aC}\def\CD@aC{\CD@y\let\CD@N\CD@ZC\CD@CF\CD@AE\CD@WD}\def
\def\CD@WD{\let
\CD@EH\relax\CD@nE\CD@vE\else\CD@hK\else\CD@KB{landscape ignored without
PostScript}\CD@jA\fi\fi\fi\CD@EJ\setbox2=\vbox\bgroup\CD@JF\CD@VD}\def\CD@cH{%
\CD@nE\CD@fB\else\CD@dB\fi\CD@hA\nointerlineskip\setbox0=\null\ht0-\CD@pI\dp0%
\CD@pI\wd0\CD@kI\box0 \global\CD@QA\CD@kF\global\CD@yA\CD@XB\ifx\CD@NK\CD@qK
\global\CD@RA\CD@kF\else\global\CD@RA\CD@NK\fi\egroup\CD@zF\CD@nE\setbox2=%
\hbox to\dp2{\vrule height\wd2 depth\CD@QA width\z@\global\CD@QA\ht2\ht2\z@
\dp2\z@\wd2\z@\CD@hK\CD@tK{q 0 1 -1 0 0 0 cm}\else\global\CD@iG\CD@IK{0 1
bturn}\fi\box2\CD@gK\hss}\CD@DB\fi\ifnum\CD@yA=1 \else\CD@DB\fi\global
\@ignorefalse\CD@mE\leavevmode\fi\ifvmode\CD@TA\else\ifmmode\CD@PH\CD@GI\else
\CD@qE\CD@gA\fi\ifinner\CD@gA\fi\CD@mE\CD@GI\else\CD@sE\CD@QB\else\CD@TA\fi
\fi\fi\fi\CD@dD}\def\CD@dD{\global\CD@NB\CD@wI\relax\CD@xE\global\CD@ID\else
\aftergroup\CD@mC\fi\if@ignore\aftergroup\ignorespaces\fi\CD@wC\ignorespaces}%
\def\CD@fB{\advance\CD@pI\dimen1\relax}\def\CD@eB{\advance\CD@pI.5\dimen1%
\relax}\def\CD@bB{}\def\CD@cB{\CD@fB\advance\CD@pI\CD@YB\divide\CD@pI2
\advance\CD@pI-\axisheight\relax}\def\CD@aA{}\def\CD@iB{\CD@kF\z@}\def\CD@AA{%
\ifdim\dimen2>\CD@kF\CD@kF\dimen2 \else\dimen2\CD@kF\CD@kI\dimen0 \advance
\CD@kI\dimen2 \fi}\def\CD@QB{\skip0\z@\relax\loop\skip1\lastskip\ifdim\skip1>%
\z@\unskip\advance\skip0\skip1 \repeat\vadjust{\prevdepth\dp\strutbox\penalty
\predisplaypenalty\vskip\abovedisplayskip\CD@UA\penalty\postdisplaypenalty
\vskip\belowdisplayskip}\ifdim\skip0=\z@\else\hskip\skip0 \global\@ignoretrue
\fi}\def\CD@TA{\CD@LG\kern-\displayindent\CD@UA\CD@LG\global\@ignoretrue}\def
\z@\CD@KB{wider than the page by \the
\dimen0 }\CD@HA\fi\CD@iE\hss\else\CD@V\CD@QA\CD@nA\fi\CD@GI\hss\kern-\wd1\box
\def\CD@GI{\CD@AF\CD@@F\else\CD@SC\global\CD@hG\fi\fi\kern\CD@QA\box2 }%
\def\CD@JF{\CD@cJ\ifdim\DiagramCellHeight=-\maxdimen
\DiagramCellHeight\CD@QC\fi\ifdim\DiagramCellWidth=-\maxdimen
\DiagramCellWidth\CD@QC\fi\global\CD@XC\CD@IF\let\CD@FE\empty\let\CD@z\CD@Q
\let\overprint\CD@eH\let\CD@s\CD@rJ\let\enddiagram\CD@ED\let\\\CD@cC\let\par
\CD@jH\let\CD@MD\empty\let\switch@arg\CD@PB\let\shift\CD@iA\baselineskip
\DiagramCellHeight\lineskip\z@\lineskiplimit\z@\mathsurround\z@\tabskip\z@
\CD@OB}\def\CD@VD{\penalty-123 \begingroup\CD@jA\aftergroup\CD@K\halign
\bgroup\global\advance\CD@NB1 \vadjust{\penalty1}\global\CD@FA\z@\CD@OB\CD@j#%
#\CD@DD\CD@Q\CD@Q\CD@OI\CD@j##\CD@DD\cr}\def\CD@ED{\CD@MD\CD@GD\crcr\egroup
\global\CD@JD\endgroup}\def\CD@j{\global\advance\CD@FA1 \futurelet\CD@EH\CD@i
}\def\CD@i{\ifx\CD@EH\CD@DD\CD@tJ\hskip1sp plus 1fil \relax\let\CD@DD\relax
\CD@vI\else\hfil\CD@k\objectstyle\let\CD@FE\CD@d\fi}\def\CD@DD{\CD@MD\relax
\CD@yI\CD@vI\global\CD@QA\CD@iA\penalty-9993 \CD@ND\hfil\null\kern-2\CD@QA
\null}\def\CD@cC{\cr}\def\across#1{\span\omit\mscount=#1 \global\advance
\CD@FA\mscount\global\advance\CD@FA\m@ne\CD@sF\ifnum\mscount>2 \CD@fJ\repeat
\ignorespaces}\def\CD@fJ{\relax\span\omit\advance\mscount\m@ne}\def\CD@qJ{%
\ifincommdiag\ifx\CD@iD\@fillh\ifx\CD@jD\@fillh\ifdim\dimen3>\z@\else\ifdim
\dimen2>93\CD@@I\ifdim\dimen2>18\p@\ifdim\CD@LF>\z@\count@\CD@bJ\advance
\count@\m@ne\ifnum\count@<\z@\count@20\let\CD@aJ\CD@uJ\fi\xdef\CD@bJ{\the
\count@}\fi\fi\fi\fi\fi\fi\fi}\def\CD@cG#1{\vrule\horizhtdp width#1\dimen@
\kern2\dimen@}\def\CD@uJ{\rlap{\dimen@\CD@@I\CD@V\dimen@{.182\p@}\CD@zH
\dimen@\advance\CD@tI\dimen@\CD@cG0\CD@cG0\CD@cG2\CD@cG6\CD@cG6\CD@cG2\CD@cG0%
\CD@cG0\CD@cG2\CD@cG6\CD@cG0\CD@cG0\CD@cG2\CD@cG2\CD@cG6\CD@cG0\CD@cG0\CD@cG2%
\CD@cG6\CD@cG2\CD@cG2\CD@cG0\CD@cG0}}\def\CD@bJ{10}\def\CD@aJ{}\def\CD@XD{%
\CD@gE\CD@TB\fi\CD@x\CD@WF\CD@HI}\def\CD@x{\CD@QJ\CD@DC\CD@MJ\ifdim\CD@DC=\z@
\else\CD@pF\CD@DC\fi\ifvoid3 \setbox3=\null\ht3\CD@tI\dp3\CD@sI\else\CD@V{\ht
3}\CD@tI\CD@V{\dp3}\CD@sI\fi\dimen3=.5\wd3 \ifdim\dimen3=\z@\CD@tE\else\dimen
3-\CD@XH\fi\else\CD@TB\fi\CD@V{\dimen2}{\wd7}\CD@V{\dimen2}{\wd6}\CD@qJ
\advance\dimen2-2\dimen3 \dimen4.5\dimen2 \dimen2\dimen4 \advance\dimen2%
\CD@eJ\advance\dimen4-\CD@eJ\advance\dimen2-\wd1 \advance\dimen4-\wd5 \ifvoid
2 \else\CD@V{\ht3}{\ht2}\CD@V{\dp3}{\dp2}\CD@V{\dimen2}{\wd2}\fi\ifvoid4 \else
\CD@V{\ht3}{\ht4}\CD@V{\dp3}{\dp4}\CD@V{\dimen4}{\wd4}\fi\advance\skip2\dimen
2 \advance\skip4\dimen4 \CD@tE\advance\skip2\skip4 \dimen0\dimen5 \advance
\dimen0\wd5 \skip3-\skip4 \advance\skip3-\dimen0 \let\CD@jD\empty\else\skip3%
\z@\relax\dimen0\z@\fi}\def\CD@WF{\offinterlineskip\lineskip.2\CD@zC\ifvoid6
\else\setbox3=\vbox{\hbox to2\dimen3{\hss\box6\hss}\box3}\fi\ifvoid7 \else
\setbox3=\vtop{\box3 \hbox to2\dimen3{\hss\box7\hss}}\fi}\def\CD@HI{\kern
\dimen1 \box1 \CD@aJ\CD@iD\hskip\skip2 \kern\dimen0 \ifincommdiag\CD@jE
\penalty1\fi\kern\dimen3 \penalty\CD@GB\hskip\skip3 \null\kern-\dimen3 \else
\hskip\skip3 \fi\box3 \CD@jD\hskip\skip4 \box5 \kern\dimen5}\def\CD@MF{\ifnum
\CD@LH>\CD@TC\CD@V{\dimen1}\objectheight\CD@V{\dimen5}\objectheight\else\CD@V
{\dimen1}\objectwidth\CD@V{\dimen5}\objectwidth\fi}\def\CD@Y{\begingroup
\ifdim\dimen7=\z@\kern\dimen8 \else\ifdim\dimen6=\z@\kern\dimen9 \else\dimen5%
\dimen6 \dimen6\dimen9 \CD@KJ\dimen4\dimen2 \CD@dG{\dimen4}\dimen6\dimen5
\dimen7\dimen8 \CD@KJ\CD@iC{\dimen2}\ifdim\dimen2<\dimen4 \kern\dimen2 \else
\kern\dimen4 \fi\fi\fi\endgroup}\def\CD@jJ{\CD@JI\setbox\z@\hbox{\lower
\axisheight\hbox to\dimen2{\CD@DF\ifPositiveGradient\dimen8\ht\CD@MH\dimen9%
\CD@mI\else\dimen8\dp3 \dimen9\dimen1 \fi\else\dimen8 \ifPositiveGradient
\objectheight\else\z@\fi\dimen9\objectwidth\fi\advance\dimen8
\ifPositiveGradient-\fi\axisheight\CD@Y\unhbox\z@\CD@DF\ifPositiveGradient
\dimen8\dp3 \dimen9\dimen0 \else\dimen8\ht\CD@MH\dimen9\CD@mF\fi\else\dimen8
\ifPositiveGradient\z@\else\objectheight\fi\dimen9\objectwidth\fi\advance
\dimen8 \ifPositiveGradient\else-\fi\axisheight\CD@Y}}}\def\CD@bD{\dimen6
\CD@aK\DiagramCellHeight\dimen7 \CD@WK\DiagramCellWidth\CD@jJ
\ifPositiveGradient\advance\dimen7-\CD@ZK\DiagramCellWidth\else\dimen7 \CD@ZK
\DiagramCellWidth\dimen6\z@\fi\advance\dimen6-\CD@bK\DiagramCellHeight\CD@mK
\setbox0=\rlap{\kern-\dimen7 \lower\dimen6\box\z@}\ht0\z@\dp0\z@\raise
\axisheight\box0 }\def\CD@mK{\setbox0\hbox{\ht\z@\z@\dp\z@\z@\wd\z@\z@\CD@hK
\expandafter\CD@tK{q \CD@eK\space\CD@lK\space\CD@kK\space\CD@eK\space0 0 cm}%
\else\global\CD@iG\CD@eD{\the\CD@TC\space\ifPositiveGradient\else-\fi\the
\CD@LH\space bturn}\fi\box\z@\CD@gK}}\def\CD@vB{\advance\CD@hF-\CD@mI\CD@wJ
\CD@hF\advance\CD@wJ\CD@hI\ifvoid\CD@sH\ifdim\CD@wJ<.1em\ifnum\CD@gD=\@m\else
\CD@aG h\CD@wJ<.1em:objects overprint:\CD@FA\CD@gD\fi\fi\else\ifhbox\CD@sH
\CD@SK\else\CD@TK\fi\advance\CD@wJ\CD@mI\CD@bH{-\CD@mI}{\box\CD@sH}{\CD@wJ}%
\z@\fi\CD@hF-\CD@mF\CD@gD\CD@FA\CD@hI\z@}\def\CD@SK{\setbox\CD@sH=\hbox{%
\unhbox\CD@sH\unskip\unpenalty}\setbox\CD@tH=\hbox{\unhbox\CD@tH\unskip
\unpenalty}\setbox\CD@sH=\hbox to\CD@wJ{\CD@OA\wd\CD@sH\unhbox\CD@sH\CD@PA
\lastkern\unkern\ifdim\CD@PA=\z@\CD@UB\advance\CD@OA-\wd\CD@tH\else\CD@TB\fi
\ifnum\lastpenalty=\z@\else\CD@JA\unpenalty\fi\kern\CD@PA\ifdim\CD@hF<\CD@OA
\CD@JA\fi\ifdim\CD@hI<\wd\CD@tH\CD@JA\fi\CD@jE\CD@hI\CD@wJ\advance\CD@hI-%
\CD@OA\advance\CD@hI\wd\CD@tH\ifdim\CD@hI<2\wd\CD@tH\CD@aG h\CD@hI<2\wd\CD@tH
:arrow too short:\CD@FA\CD@gD\fi\divide\CD@hI\tw@\CD@hF\CD@wJ\advance\CD@hF-%
\CD@hI\fi\CD@tE\kern-\CD@hI\fi\hbox to\CD@hI{\unhbox\CD@tH}\CD@HG}}\CD@tG
\def\pile{\protect\CD@UJ\protect
\CD@uH}\def\CD@uH#1{\CD@l#1\CD@QD}\def\CD@UJ{\CD@nB{pile}\setbox0=\vtop
\bgroup\aftergroup\CD@lD\inpiletrue\let\CD@FE\empty\let\pile\CD@KF\let\CD@QD
\CD@PD\let\CD@GD\CD@FD\CD@yH\baselineskip.5\PileSpacing\lineskip.1\CD@zC
\relax\lineskiplimit\lineskip\mathsurround\z@\tabskip\z@\let\\\CD@wH}\def
\CD@rG\CD@NE{pile only allows one column.}%
\CD@rG\CD@UE{you left it out!}\def\CD@R{\CD@QD\CD@Q\relax\CD@YA{missing \CD@yC
\space inserted after \string\pile}\CD@NE}\def\CD@PD{\CD@MD\crcr\egroup
\egroup}\def\CD@GD{\CD@MD}\def\CD@FD{\CD@MD\relax\CD@QD\CD@YA{missing \CD@yC
\space inserted between \string\pile\space and \CD@HD}\CD@UE}\def\CD@QD{%
\CD@MD}\def\CD@lD{\vbox{\dimen1\dp0 \unvbox0 \setbox0=\lastbox\advance\dimen1%
\dp0 \nointerlineskip\box0 \nointerlineskip\setbox0=\null\dp0.5\dimen1\ht0-%
\dp0 \box0}\ifincommdiag\CD@tJ\penalty-9998 \fi\xdef\CD@YF{pile}}\def\CD@vH{%
\cr}\def\CD@wH{\noalign{\skip@\prevdepth\advance\skip@-\baselineskip
\prevdepth\skip@}}\def\CD@KF#1{#1}\def\CD@TK{\setbox\CD@sH=\vbox{\unvbox
\CD@sH\setbox1=\lastbox\setbox0=\box\voidb@x\CD@tF\setbox\CD@sH=\lastbox
\ifhbox\CD@sH\CD@rC\repeat\unvbox0 \global\CD@QA\CD@ZE}\CD@ZE\CD@QA}\def
\def\CD@gJ{\penalty7
\noindent\unhbox\CD@sH\unskip\setbox\CD@sH=\lastbox\unskip\unhbox\CD@sH
\endgraf\setbox\CD@tH=\lastbox\unskip\setbox\CD@tH=\hbox{\CD@JG\unhbox\CD@tH
\unskip\unskip\unpenalty}\ifcase\prevgraf\cd@shouldnt P\or\ifdim\CD@wJ<\wd
\CD@tH\CD@aG h\CD@wJ<\wd\CD@sH:object in pile too wide:\CD@FA\CD@gD\setbox
\CD@sH=\hbox to\CD@wJ{\hss\unhbox\CD@tH\hss}\else\setbox\CD@sH=\hbox to\CD@wJ
{\hss\kern\CD@hF\unhbox\CD@tH\kern\CD@hI\hss}\fi\or\setbox\CD@sH=\lastbox
\unskip\CD@SK\else\cd@shouldnt Q\fi\unskip\unpenalty}\def\CD@cD{\CD@MJ\ifvoid
3 \setbox3=\null\ht3\axisheight\dp3-\ht3 \dimen3.5\CD@LF\else\dimen4\dp3
\dimen3.5\wd3 \setbox3=\CD@GG{\box3}\dp3\dimen4 \ifdim\ht3=-\dp3 \else\CD@TB
\fi\fi\dimen0\dimen3 \advance\dimen0-.5\CD@LF\setbox0\null\ht0\ht3\dp0\dp3\wd
0\wd3 \ifvoid6\else\setbox6\hbox{\unhbox6\kern\dimen0\kern2pt}\dimen0\wd6 \fi
\ifvoid7\else\setbox7\hbox{\kern2pt\kern\dimen3\unhbox7}\dimen3\wd7 \fi
\setbox3\hbox{\ifvoid6\else\kern-\dimen0\unhbox6\fi\unhbox3 \ifvoid7\else
\unhbox7\kern-\dimen3\fi}\ht3\ht0\dp3\dp0\wd3\wd0 \CD@tE\dimen4=\ht\CD@MH
\advance\dimen4\dp5 \advance\dimen4\dimen1 \let\CD@jD\empty\else\dimen4\ht3
\fi\setbox0\null\ht0\dimen4 \offinterlineskip\setbox8=\vbox spread2ex{\kern
\dimen5 \box1 \CD@iD\vfill\CD@tE\else\kern\CD@eJ\fi\box0}\ht8=\z@\setbox9=%
\vtop spread2ex{\kern-\ht3 \kern-\CD@eJ\box3 \CD@jD\vfill\box5 \kern\dimen1}%
\dp9=\z@\hskip\dimen0plus.0001fil \box9 \kern-\CD@LF\box8 \CD@kE\penalty2 \fi
\CD@tE\penalty1 \fi\kern\PileSpacing\kern-\PileSpacing\kern-.5\CD@LF\penalty
\CD@GB\null\kern\dimen3}\def\CD@cI{\ifhbox\CD@VA\CD@KB{clashing verticals}\ht
\CD@MH.5\dp\CD@VA\dp\CD@MH-\ht5 \CD@yB\ht\CD@MH\z@\dp\CD@MH\z@\fi\dimen1\dp
\CD@VA\CD@xA\prevgraf\unvbox\CD@VA\CD@wA\lastpenalty\unpenalty\setbox\CD@VA=%
\null\setbox\CD@lI=\hbox{\CD@JG\unhbox\CD@lI\unskip\unpenalty\dimen0\lastkern
\unkern\unkern\unkern\kern\dimen0 \CD@HG}\setbox\CD@lF=\hbox{\unhbox\CD@lF
\dimen0\lastkern\unkern\unkern\global\CD@QA\lastkern\unkern\kern\dimen0 }%
\CD@tF\ifnum\CD@xA>4 \CD@zI\repeat\unskip\unskip\advance\CD@mF.5\wd\CD@VA
\advance\CD@mF\wd\CD@lF\advance\CD@mI.5\wd\CD@VA\advance\CD@mI\wd\CD@lI\ifnum
\CD@FA=\CD@lA\CD@OA.5\wd\CD@VA\edef\CD@NK{\the\CD@OA}\fi\setbox\CD@VA=\hbox{%
\kern-\CD@mF\box\CD@lF\unhbox\CD@VA\box\CD@lI\kern-\CD@mI\penalty\CD@wA
\penalty\CD@NB}\ht\CD@VA\dimen1 \dp\CD@VA\z@\wd\CD@VA\CD@tB\CD@vB}\def\CD@zI{%
\ifdim\wd\CD@lF<\CD@QA\setbox\CD@lF=\hbox to\CD@QA{\CD@JG\unhbox\CD@lF}\fi
\advance\CD@xA\m@ne\setbox\CD@VA=\hbox{\box\CD@lF\unhbox\CD@VA}\unskip\setbox
\CD@lF=\lastbox\setbox\CD@lF=\hbox{\unhbox\CD@lF\unskip\unpenalty\dimen0%
\lastkern\unkern\unkern\global\CD@QA\lastkern\unkern\kern\dimen0 }}\def\CD@yB
\def\CD@zB{\unvbox\CD@VA
\CD@wA\lastpenalty\unpenalty\ifdim\dimen1<\ht\CD@MH\CD@aG v\dimen1<\ht\CD@MH:%
rows overprint:\CD@NB\CD@wA\fi}\def\CD@xB{\dimen0=\ht\CD@VA\setbox\CD@VA=%
\hbox\bgroup\advance\dimen1-\ht\CD@MH\unhbox\CD@VA\CD@xA\lastpenalty
\unpenalty\CD@wA\lastpenalty\unpenalty\global\CD@RA-\lastkern\unkern\setbox0=%
\lastbox\CD@tF\setbox\CD@VA=\hbox{\box0\unhbox\CD@VA}\setbox0=\lastbox\ifhbox
0 \CD@kJ\repeat\global\CD@SA-\lastkern\unkern\global\CD@QA\CD@JK\unhbox\CD@VA
\egroup\CD@JK\CD@QA\CD@bH{\CD@SA}{\box\CD@VA}{\CD@RA}{\dimen1}}\def\CD@kJ{%
\setbox0=\hbox to\wd0\bgroup\unhbox0 \unskip\unpenalty\dimen7\lastkern\unkern
\ifnum\lastpenalty=1 \unpenalty\CD@UB\else\CD@TB\fi\ifnum\lastpenalty=2
\unpenalty\dimen2.5\dimen0\advance\dimen2-.5\dimen1\advance\dimen2-%
\axisheight\else\dimen2\z@\fi\setbox0=\lastbox\dimen6\lastkern\unkern\setbox1%
=\lastbox\setbox0=\vbox{\unvbox0 \CD@tE\kern-\dimen1 \else\ifdim\dimen2=\z@
\else\kern\dimen2 \fi\fi}\ifdim\dimen0<\ht0 \CD@aG v\dimen0<\ht0:upper part of
vertical too short:{\CD@tE\CD@NB\else\CD@wA\fi}\CD@xA\else\setbox0=\vbox to%
\dimen0{\unvbox0}\fi\setbox1=\vtop{\unvbox1}\ifdim\dimen1<\dp1 \CD@aG v\dimen
1<\dp1:lower part of vertical too short:\CD@NB\CD@wA\else\setbox1=\vtop to%
\dimen1{\ifdim\dimen2=\z@\else\kern-\dimen2 \fi\unvbox1 }\fi\box1 \kern\dimen
6 \box0 \kern\dimen7 \CD@HG\global\CD@QA\CD@JK\egroup\CD@JK\CD@QA\relax}%
\let\CD@LB
\let\CD@mA\CD@XB\newcount\CD@MB\CD@tG
\def\CD@nD{-1}\def\CD@K{\ifnum\CD@nD<\z@\else
\begingroup\scrollmode\showboxdepth\CD@nD\showboxbreadth\maxdimen\showlists
\endgroup\fi\CD@bI\CD@zF\CD@CA=\CD@u\advance\CD@CA1 \CD@XB=\CD@CA\ifnum\CD@NB
=1 \CD@JA\fi\advance\CD@XB\CD@NB\dimen1\z@\skip0\z@\count@=\insc@unt\advance
\count@\CD@u\divide\count@2 \ifnum\CD@XB>\count@\CD@KB{The diagram has too
many rows! It can't be reformatted.}\else\CD@NG\CD@WI\fi\CD@cH}\def\CD@NG{%
\CD@NB\CD@CA\CD@uF\ifnum\CD@NB<\CD@XB\setbox\CD@NB\box\voidb@x\advance\CD@NB1%
\relax\repeat\CD@NB\CD@CA\skip\z@\z@\CD@uF\CD@GB\lastpenalty\unpenalty\ifnum
\CD@GB>\z@\CD@KE\repeat\ifnum\CD@GB=-123 \CD@tJ\unpenalty\else\cd@shouldnt D%
\fi\ifx\v@grid\relax\else\CD@NB\CD@XB\advance\CD@NB\m@ne\expandafter\CD@VJ
\v@grid\fi\CD@MB\CD@mA\CD@tB\z@\CD@XG\ifx\h@grid\relax\else\expandafter\CD@LJ
\h@grid\fi\count@\CD@XB\advance\count@\m@ne\CD@YB\ht\count@}\def\CD@KE{%
\ifcase\CD@GB\or\CD@MG\else\CD@uA-\lastpenalty\unpenalty\CD@vA\lastpenalty
\unpenalty\setbox0=\lastbox\CD@WG\fi\CD@wD}\def\CD@wD{\skip1\lastskip\unskip
\advance\skip0\skip1 \ifdim\skip1=\z@\else\expandafter\CD@wD\fi}\def\CD@MG{%
\setbox0=\lastbox\CD@pI\dp0 \advance\CD@pI\skip\z@\skip\z@\z@\advance\CD@NF
\CD@pI\CD@uE\ifnum\CD@NB>\CD@CA\CD@NF\DiagramCellHeight\CD@pI\CD@NF\advance
\CD@pI-\CD@qI\fi\fi\CD@qI\ht0 \CD@NF\CD@qI\setbox\CD@NB\hbox{\unhbox\CD@NB
\unhbox0}\dp\CD@NB\CD@pI\ht\CD@NB\CD@qI\advance\CD@NB1 }\def\CD@WG{\ifnum
\CD@uA<\z@\advance\CD@uA\CD@XB\ifnum\CD@uA<\CD@CA\CD@UG\else\CD@OA\dp\CD@uA
\CD@PA\ht\CD@uA\setbox\CD@uA\hbox{\box\z@\penalty\CD@vA\penalty\CD@GB\unhbox
\CD@uA}\dp\CD@uA\CD@OA\ht\CD@uA\CD@PA\fi\else\CD@UG\fi}\def\CD@UG{\CD@KB{%
diagonal goes outside diagram (lost)}}\def\CD@fI{\advance\CD@uA\CD@XB\ifnum
\CD@uA<\CD@CA\CD@UG\else\ifnum\CD@uA=\CD@NB\CD@VG\else\ifnum\CD@uA>\CD@NB
\cd@shouldnt M\else\CD@OA\dp\CD@uA\CD@PA\ht\CD@uA\setbox\CD@uA\hbox{\box\z@
\penalty\CD@vA\penalty\CD@GB\unhbox\CD@uA}\dp\CD@uA\CD@OA\ht\CD@uA\CD@PA\fi
\fi\fi}\def\CD@WI{\CD@t\CD@AJ\setbox\CD@PC=\hbox{\CD@k A\@super f\CD@lJ f%
\CD@ND}\CD@ZE\z@\CD@JK\z@\CD@kI\z@\CD@kF\z@\CD@NB=\CD@XB\CD@NF\z@\CD@uB\z@
\CD@uF\ifnum\CD@NB>\CD@CA\advance\CD@NB\m@ne\CD@qI\ht\CD@NB\CD@pI\dp\CD@NB
\advance\CD@NF\CD@qI\CD@rI\advance\CD@uB\CD@NF\CD@KC\CD@ZI\CD@w\ht\CD@NB
\CD@qI\dp\CD@NB\CD@pI\nointerlineskip\box\CD@NB\CD@NF\CD@pI\setbox\CD@NB\null
\ht\CD@NB\CD@uB\repeat\CD@wB\nointerlineskip\box\CD@NB\CD@gG\CD@ZE
\DiagramCellWidth{width}\CD@gG\CD@JK\DiagramCellHeight{height}\CD@VA\CD@LB
\advance\CD@VA-\CD@lA\advance\CD@VA\m@ne\advance\CD@VA\CD@mA\dimen0\wd\CD@VA
\CD@tI\axisheight\dimen1\CD@uB\advance\dimen1-\CD@YB\dimen2\CD@kI\advance
\dimen2-\dimen0 \advance\CD@XB-\CD@CA\advance\CD@LB-\CD@lA}\count@\year
\def\CD@wB{\CD@qI-\CD@NF\CD@pI\CD@NF
\setbox\CD@MH=\null\dp\CD@MH\CD@NF\ht\CD@MH-\CD@NF\CD@mF\z@\CD@mI\z@\CD@lA
\CD@LB\advance\CD@lA-\CD@MB\advance\CD@lA\CD@mA\CD@FA\CD@LB\CD@VA\CD@MB\CD@sF
\ifnum\CD@FA>\CD@lA\advance\CD@FA\m@ne\advance\CD@VA\m@ne\CD@tB\wd\CD@VA
\setbox\CD@FA=\box\voidb@x\CD@yB\repeat\CD@w\ht\CD@NB\CD@qI\dp\CD@NB\CD@pI}%
\def\CD@gG#1#2#3{\ifdim#1>.01\CD@zC\CD@PA#2\relax\advance\CD@PA#1\relax
\advance\CD@PA.99\CD@zC\count@\CD@PA\divide\count@\CD@zC\CD@KB{increase cell #%
3 to \the\count@ em}\fi}\def\CD@rI{\CD@FA=\CD@LB\penalty4 \noindent\unhbox
\CD@NB\CD@sF\unskip\setbox0=\lastbox\ifhbox0 \advance\CD@FA\m@ne\setbox\CD@FA
\hbox to\wd0{\null\penalty-9990\null\unhbox0}\repeat\CD@lA\CD@FA\advance
\CD@FA\CD@MB\advance\CD@FA-\CD@mA\ifnum\CD@FA<\CD@LB\count@\CD@FA\advance
\count@\m@ne\dimen0=\wd\count@\count@\CD@MB\advance\count@\m@ne\CD@tB\wd
\count@\CD@sF\ifnum\CD@FA<\CD@LB\CD@DJ\CD@XG\dimen0\wd\CD@FA\advance\CD@FA1
\repeat\fi\CD@sF\CD@GB\lastpenalty\unpenalty\ifnum\CD@GB>\z@\CD@vA
\lastpenalty\unpenalty\CD@VG\repeat\endgraf\unskip\ifnum\lastpenalty=4
\unpenalty\else\cd@shouldnt S\fi}\def\CD@VG{\advance\CD@vA\CD@lA\advance
\CD@vA\m@ne\setbox0=\lastbox\ifnum\CD@vA<\CD@LB\setbox\CD@vA\hbox{\box0%
\penalty\CD@GB\unhbox\CD@vA}\else\CD@UG\fi}\def\CD@bG{}\CD@tG\CD@uE\CD@WB
\def\CD@DJ{\advance\dimen0\wd\CD@FA\divide\dimen0\tw@\CD@uE\dimen0%
\DiagramCellWidth\else\CD@V{\dimen0}\DiagramCellWidth\CD@pJ\fi\advance\CD@tB
\dimen0 }\def\CD@XG{\setbox\CD@MB=\vbox{}\dp\CD@MB=\CD@uB\wd\CD@MB\CD@tB
\advance\CD@MB1 }\def\CD@LJ#1,{\def\CD@GK{#1}\ifx\CD@GK\CD@RD\else\advance
\CD@tB\CD@GK\DiagramCellWidth\CD@XG\expandafter\CD@LJ\fi}\def\CD@VJ#1,{\def
\CD@GK{#1}\ifx\CD@GK\CD@RD\else\ifnum\CD@NB>\CD@CA\CD@NF\CD@GK
\DiagramCellHeight\advance\CD@NF-\dp\CD@NB\advance\CD@NB\m@ne\ht\CD@NB\CD@NF
\fi\expandafter\CD@VJ\fi}\def\CD@pJ{\CD@wE\CD@OA\dimen0 \advance\CD@OA-%
\DiagramCellWidth\ifdim\CD@OA>2\MapShortFall\CD@KB{badly drawn diagonals (see
manual)}\let\CD@pJ\empty\fi\else\let\CD@pJ\empty\fi}\def\CD@KC{\CD@VA\CD@mA
\CD@sF\ifnum\CD@VA<\CD@MB\dimen0\dp\CD@VA\advance\dimen0\CD@NF\dp\CD@VA\dimen
0 \advance\CD@VA1 \repeat}\def\CD@bH#1#2#3#4{\ifnum\CD@FA<\CD@LB\CD@OA=#1%
\relax\setbox\CD@FA=\hbox{\setbox0=#2\dimen7=#4\relax\dimen8=#3\relax\ifhbox
\CD@FA\unhbox\CD@FA\advance\CD@OA-\lastkern\unkern\fi\ifdim\CD@OA=\z@\else
\kern-\CD@OA\fi\raise\dimen7\box0 \kern-\dimen8 }\ifnum\CD@FA=\CD@lA\CD@V
\CD@kF\CD@OA\fi\else\cd@shouldnt O\fi}\def\CD@w{\setbox\CD@NB=\hbox{\CD@FA
\CD@lA\CD@VA\CD@mA\CD@PA\z@\relax\CD@sF\ifnum\CD@FA<\CD@LB\CD@tB\wd\CD@VA
\relax\CD@eI\advance\CD@FA1 \advance\CD@VA1 \repeat}\CD@V\CD@kI{\wd\CD@NB}\wd
\CD@NB\z@}\def\CD@eI{\ifhbox\CD@FA\CD@OA\CD@tB\relax\advance\CD@OA-\CD@PA
\relax\ifdim\CD@OA=\z@\else\kern\CD@OA\fi\CD@PA\CD@tB\advance\CD@PA\wd\CD@FA
\relax\unhbox\CD@FA\advance\CD@PA-\lastkern\unkern\fi}\def\CD@ZI{\setbox
\CD@sH=\box\voidb@x\CD@VA=\CD@MB\CD@FA\CD@LB\CD@VA\CD@mA\advance\CD@VA\CD@FA
\advance\CD@VA-\CD@lA\advance\CD@VA\m@ne\CD@tB\wd\CD@VA\count@\CD@LB\advance
\count@\m@ne\CD@hF.5\wd\count@\advance\CD@hF\CD@tB\CD@A\m@ne\CD@gD\@m\CD@sF
\ifnum\CD@FA>\CD@lA\advance\CD@FA\m@ne\advance\CD@hF-\CD@tB\CD@PI\wd\CD@VA
\CD@tB\advance\CD@hF\CD@tB\advance\CD@VA\m@ne\CD@tB\wd\CD@VA\repeat\CD@mF
\CD@kF\CD@mI-\CD@mF\CD@vB}\newcount\CD@GB\def\CD@s{}\def\CD@t{\mathsurround
\z@\hsize\z@\rightskip\z@ plus1fil minus\maxdimen\parfillskip\z@\linepenalty
9000 \looseness0 \hfuzz\maxdimen\hbadness10000 \clubpenalty0 \widowpenalty0
\displaywidowpenalty0 \interlinepenalty0 \predisplaypenalty0
\postdisplaypenalty0 \interdisplaylinepenalty0 \interfootnotelinepenalty0
\floatingpenalty0 \brokenpenalty0 \everypar{}\leftskip\z@\parskip\z@
\parindent\z@\pretolerance10000 \tolerance10000 \hyphenpenalty10000
\exhyphenpenalty10000 \binoppenalty10000 \relpenalty10000 \adjdemerits0
\doublehyphendemerits0 \finalhyphendemerits0 \baselineskip\z@\CD@IA\prevdepth
\z@}\newbox\CD@KG\newbox\CD@IG\def\CD@JG{\unhcopy\CD@KG}\def\CD@HG{\unhcopy
\CD@IG}\def\CD@iJ{\hbox{}\penalty1\nointerlineskip}\def\CD@PI{\penalty5
\noindent\setbox\CD@MH=\null\CD@mF\z@\CD@mI\z@\ifnum\CD@FA<\CD@LB\ht\CD@MH\ht
\CD@FA\dp\CD@MH\dp\CD@FA\unhbox\CD@FA\skip0=\lastskip\unskip\else\CD@OK\skip0%
=\z@\fi\endgraf\ifcase\prevgraf\cd@shouldnt Y \or\cd@shouldnt Z \or\CD@RI\or
\CD@XI\else\CD@QI\fi\unskip\setbox0=\lastbox\unskip\unskip\unpenalty\noindent
\unhbox0\setbox0\lastbox\unpenalty\unskip\unskip\unpenalty\setbox0\lastbox
\CD@tF\CD@GB\lastpenalty\unpenalty\ifnum\CD@GB>\z@\setbox\z@\lastbox\CD@lB
\repeat\endgraf\unskip\unskip\unpenalty}\def\CD@YJ{\CD@uA\CD@XB\advance\CD@uA
-\CD@NB\CD@vA\CD@FA\advance\CD@vA-\CD@lA\advance\CD@vA1 \expandafter\message{%
prevgraf=\the\prevgraf at (\the\CD@uA,\the\CD@vA)}}\def\CD@XI{\CD@CE\setbox
\CD@lI=\lastbox\setbox\CD@lI=\hbox{\unhbox\CD@lI\unskip\unpenalty}\unskip
\ifdim\ht\CD@lI>\ht\CD@PC\setbox\CD@MH=\copy\CD@lI\else\ifdim\dp\CD@lI>\dp
\CD@PC\setbox\CD@MH=\copy\CD@lI\else\CD@FG\CD@lI\fi\fi\advance\CD@mF.5\wd
\CD@lI\advance\CD@mI.5\wd\CD@lI\setbox\CD@lI=\hbox{\unhbox\CD@lI\CD@HG}\CD@bH
\CD@mF{\box\CD@lI}\CD@mI\z@\CD@yB\CD@vB}\def\CD@CE{\ifnum\CD@A>0 \advance
\dimen0-\CD@tB\CD@iA-.5\dimen0 \CD@A-\CD@A\else\CD@A0 \CD@iA\z@\fi\setbox
\CD@MH=\lastbox\setbox\CD@MH=\hbox{\unhbox\CD@MH\unskip\unskip\unpenalty
\setbox0=\lastbox\global\CD@QA\lastkern\unkern}\advance\CD@iA-.5\CD@QA\unskip
\setbox\CD@MH=\null\CD@mI\CD@iA\CD@mF-\CD@iA}\def\CD@Z{\ht\CD@MH\CD@tI\dp
\CD@MH\CD@sI}\def\CD@FG#1{\setbox\CD@MH=\hbox{\CD@V{\ht\CD@MH}{\ht#1}\CD@V{%
\dp\CD@MH}{\dp#1}\CD@V{\wd\CD@MH}{\wd#1}\vrule height\ht\CD@MH depth\dp\CD@MH
width\wd\CD@MH}}\def\CD@QI{\CD@CE\CD@Z\setbox\CD@lI=\lastbox\unskip\setbox
\CD@lF=\lastbox\unskip\setbox\CD@lF=\hbox{\unhbox\CD@lF\unskip\global\CD@yA
\lastpenalty\unpenalty}\advance\CD@yA9999 \ifcase\CD@yA\CD@VI\or\CD@YI\or
\CD@TI\or\CD@dI\or\CD@cI\or\CD@SI\else\cd@shouldnt9\fi}\def\CD@VI{\CD@FG
\CD@lI\CD@UI\setbox\CD@sH=\box\CD@lF\setbox\CD@tH=\box\CD@lI}\def\CD@YI{%
\CD@FG\CD@lF\setbox\CD@lI\hbox{\penalty8 \unhbox\CD@lI\unskip\unpenalty\ifnum
\lastpenalty=8 \else\CD@xH\fi}\CD@UI\setbox\CD@lF=\hbox{\unhbox\CD@lF\unskip
\unpenalty\global\setbox\CD@DA=\lastbox}\ifdim\wd\CD@lF=\z@\else\CD@xH\fi
\setbox\CD@sH=\box\CD@DA}\def\CD@xH{\CD@KB{extra material in \string\pile
\space cell (lost)}}\def\CD@UI{\CD@yB\ifvoid\CD@sH\else\CD@KB{Clashing
horizontal arrows}\CD@mI.5\CD@hF\CD@mF-\CD@mI\CD@vB\CD@mI\z@\CD@mF\z@\fi
\CD@hI\CD@hF\advance\CD@hI-\CD@mI\CD@hF-\CD@mF\CD@JC\CD@FA}\def\CD@RI{\setbox
0\lastbox\unskip\CD@iA\z@\CD@Z\ifdim\skip0>\z@\CD@tJ\CD@A0 \else\ifnum\CD@A<1
\CD@A0 \dimen0\CD@tB\fi\advance\CD@A1 \fi}\def\VonH{\CD@MA46\VonH{.5\CD@LF}}%
\def\HonV{\CD@MA57\HonV{.5\CD@LF}}\def\HmeetV{\CD@MA44\HmeetV{-\MapShortFall}%
}\def\CD@MA#1#2#3#4{\CD@pB34#1{\string#3}\CD@SD\CD@GB-999#2 \dimen0=#4\CD@tI
\dimen0\advance\CD@tI\axisheight\CD@sI\dimen0\advance\CD@sI-\axisheight\CD@CF
\CD@HC\CD@ZD}\def\CD@HC#1{\setbox0=\hbox{\CD@k#1\CD@ND}\dimen0.5\wd0 \CD@tI
\ht0 \CD@sI\dp0 \CD@ZD}\def\CD@SD{\setbox0=\null\ht0=\CD@tI\dp0=\CD@sI\wd0=%
\dimen0 \copy0\penalty\CD@GB\box0 }\def\CD@TI{\CD@GC\CD@yB}\def\CD@dI{\CD@GC
\CD@vB}\def\CD@SI{\CD@GC\CD@yB\CD@vB}\def\CD@GC{\setbox\CD@lI=\hbox{\unhbox
\CD@lI}\setbox\CD@lF=\hbox{\unhbox\CD@lF\global\setbox\CD@DA=\lastbox}\ht
\CD@MH\ht\CD@DA\dp\CD@MH\dp\CD@DA\advance\CD@mF\wd\CD@DA\advance\CD@mI\wd
\CD@lI}\CD@tG\ifPositiveGradient\CD@CI\CD@BI\CD@CI\CD@tG\ifClimbing\CD@rB
\def\CD@qF{\CD@KH\ifPositiveGradient/\else\CD@k\backslash\CD@ND\fi}%
\def\CD@qF{\CD@rF\char\count@}\fi\let\CD@rF\tenln\def\Use@line@char#1{%
\hbox{#1\CD@rF\ifPositiveGradient\else\advance\count@64 \fi\char\count@}}\def
\def\CD@ZF{\Use@line@char{\ifcase\DiagonalChoice\CD@gF\or
\CD@fF\or\CD@fF\else\CD@gF\fi}}\def\CD@gF{\ifnum\CD@TC=\z@\count@'33 \else
\count@\CD@TC\multiply\count@\sixt@@n\advance\count@-9\advance\count@\CD@LH
\advance\count@\CD@LH\fi}\def\CD@fF{\count@'\ifcase\CD@LH55\or\ifcase\CD@TC66%
\or22\or52\or61\or72\fi\or\ifcase\CD@TC66\or25\or22\or63\or52\fi\or\ifcase
\CD@TC66\or16\or36\or22\or76\fi\or\ifcase\CD@TC66\or27\or25\or67\or22\fi\fi
\relax}\def\CD@uC#1{\hbox{#1\setbox0=\Use@line@char{#1}\ifPositiveGradient
\else\raise.3\ht0\fi\copy0 \kern-.7\wd0 \ifPositiveGradient\raise.3\ht0\fi
\box0}}\def\CD@jF#1{\hbox{\setbox0=#1\kern-.75\wd0 \vbox to.25\ht0{%
\ifPositiveGradient\else\vss\fi\box0 \ifPositiveGradient\vss\fi}}}\def\CD@jI#%
\def\CD@tC#1#2{\vbox to#1{\vss\hbox to#%
2{\hss.\hss}\vss}}\def\hfdot{\CD@tC{2\axisheight}{.5em}}%
\def\vfdot{\CD@tC{1ex}\z@}
\def\CD@bF{\hbox{\dimen0=.3\CD@zC\dimen1\dimen0 \ifnum\CD@LH>\CD@TC\CD@iC{%
\dimen1}\else\CD@dG{\dimen0}\fi\CD@tC{\dimen0}{\dimen1}}}\newarrowfiller{.}%
\def\dfdot{\CD@bF\CD@CK}\CD@RC{+f:.}{\dfdot}\CD@RC{-f%
\def\CD@@K#1{\hbox\bgroup\def\CD@CH{#1\egroup}\afterassignment
\CD@CH
\count@='}\def\lnchar{\CD@@K\CD@qF}\def\CD@dF#1{\setbox#1=\hbox{\dimen5\dimen
#1 \setbox8=\box#1 \dimen1\wd8 \count@\dimen5 \divide\count@\dimen1 \ifnum
\count@=0 \box8 \ifdim\dimen5<.95\dimen1 \CD@gB{diagonal line too short}\fi
\else\dimen3=\dimen5 \advance\dimen3-\dimen1 \divide\dimen3\count@\dimen4%
\dimen3 \CD@dG{\dimen4}\ifPositiveGradient\multiply\dimen4\m@ne\fi\dimen6%
\dimen1 \advance\dimen6-\dimen3 \loop\raise\count@\dimen4\copy8 \ifnum\count@
>0 \kern-\dimen6 \advance\count@\m@ne\repeat\fi}}\def\CD@CG#1{\CD@EF\CD@xJ{#1%
}\else\CD@dF{#1}\fi}\def\CD@IH#1{}\newdimen\objectheight\objectheight1.8ex
\newdimen\objectwidth\objectwidth1em \def\CD@YD{\dimen6=\CD@aK
\DiagramCellHeight\dimen7=\CD@WK\DiagramCellWidth\CD@KJ\ifnum\CD@LH>0 \ifnum
\CD@TC>0 \CD@aF\else\aftergroup\CD@VC\fi\else\aftergroup\CD@UC\fi}\def\CD@VC{%
\CD@YA{diagonal map is nearly vertical}\CD@NA}\def\CD@UC{\CD@YA{diagonal map
is nearly horizontal}\CD@NA}\CD@rG\CD@NA{Use an orthogonal map instead}\def
\axisheight\CD@iC{\dimen8%
}\CD@X{\dimen8}{.5\wd3}\dimen9\dp3\advance\dimen9\axisheight\CD@iC{\dimen9}%
\else\CD@CG{2}\CD@CG{4}\ifPositiveGradient\dimen2-\dimen0%
\fi\rlap{\unhbox1}\fi\raise
\def\NorthWest{\CD@BI
\CD@rB\DiagonalChoice0 }\def\NorthEast{\CD@CI\CD@rB\DiagonalChoice1 }\def
\def\SouthEast{\CD@BI\CD@qB
\DiagonalChoice2 }\def\CD@aD{\vadjust{\CD@uA\CD@FA\advance\CD@uA
\ifPositiveGradient\else-\fi\CD@ZK\relax\CD@vA\CD@NB\advance\CD@vA-\CD@bK
\relax\hbox{\advance\CD@uA\ifPositiveGradient-\fi\CD@WK\advance\CD@vA\CD@aK
\hbox{\box6 \kern\CD@DC\kern\CD@eJ\penalty1 \box7 \box\z@}\penalty\CD@uA
\penalty\CD@vA}\penalty\CD@uA\penalty\CD@vA\penalty104}}\def\CD@eH#1{\relax
\vadjust{\hbox@maths{#1}\penalty\CD@FA\penalty\CD@NB\penalty\tw@}}\def\CD@lB{%
\ifcase\CD@GB\or\or\CD@bH{.5\wd0}{\box0}{.5\wd0}\z@\or\unhbox\z@\setbox\z@
\lastbox\CD@bH{.5\wd0}{\box0}{.5\wd0}\z@\unpenalty\unpenalty\setbox\z@
\lastbox\or\CD@TG\else\advance\CD@GB-100 \ifnum\CD@GB<\z@\cd@shouldnt B\fi
\setbox\z@\hbox{\kern\CD@mF\copy\CD@MH\kern\CD@mI\CD@uA\CD@XB\advance\CD@uA-%
\CD@NB\penalty\CD@uA\CD@uA\CD@FA\advance\CD@uA-\CD@lA\penalty\CD@uA\unhbox\z@
\global\CD@yA\lastpenalty\unpenalty\global\CD@zA\lastpenalty\unpenalty}\CD@uA
-\CD@yA\CD@vA\CD@zA\CD@fI\fi}\def\CD@TG{\unhbox\z@\setbox\z@\lastbox\CD@uA
\lastpenalty\unpenalty\advance\CD@uA\CD@mA\CD@vA\CD@XB\advance\CD@vA-%
\lastpenalty\unpenalty\dimen1\lastkern\unkern\setbox3\lastbox\dimen0\lastkern
\unkern\setbox0=\hbox to\z@{\unhbox0\setbox0\lastbox\setbox7\lastbox
\unpenalty\CD@eJ\lastkern\unkern\CD@DC\lastkern\unkern\setbox6\lastbox\dimen7%
\CD@tB\advance\dimen7-\wd\CD@uA\ifdim\dimen7<\z@\CD@CI\multiply\dimen7\m@ne
\let\mv\empty\else\CD@BI\def\mv{\raise\ht1}\kern-\dimen7 \fi\ifnum\CD@vA>%
\CD@NB\dimen6\CD@uB\advance\dimen6-\ht\CD@vA\else\dimen6\z@\fi\CD@jJ\CD@mK
\setbox1\null\ht1\dimen6\wd1\dimen7 \dimen7\dimen2 \dimen6\wd1 \CD@KJ\CD@uA
\CD@LH\CD@vA\CD@TC\dimen6\ht1 \CD@KJ\setbox2\null\divide\dimen2\tw@\advance
\dimen2\CD@eJ\CD@eG{\dimen2}\wd2\dimen2 \dimen0.5\dimen7 \advance\dimen0%
\ifPositiveGradient\else-\fi\CD@eJ\CD@dG{\dimen0}\advance\dimen0-\axisheight
\ht2\dimen0 \dimen0\CD@DC\CD@eG{\dimen0}\advance\dimen0\ht2\ht2\dimen0 \dimen
0\ifPositiveGradient-\fi\CD@DC\CD@dG{\dimen0}\advance\dimen0\wd2\wd2\dimen0
\setbox4\null\dimen0 .6\CD@zC\CD@eG{\dimen0}\ht4\dimen0 \dimen0 .2\CD@zC
\CD@dG{\dimen0}\wd4\dimen0 \dimen0\wd2 \ifvoid6\else\dimen1\ht4 \advance
\dimen1\ht2 \CD@CC6+-\raise\dimen1\rlap{\ifPositiveGradient\advance\dimen0-%
\wd6\advance\dimen0-\wd4 \else\advance\dimen0\wd4 \fi\kern\dimen0\box6}\fi
\dimen0\wd2 \ifvoid7\else\dimen1\ht4 \advance\dimen1-\ht2 \CD@CC7-+\lower
\dimen1\rlap{\ifPositiveGradient\advance\dimen0\wd4 \else\advance\dimen0-\wd7%
\advance\dimen0-\wd4 \fi\kern\dimen0\box7}\fi\mv\box0\hss}\ht0\z@\dp0\z@
\CD@bH{\z@}{\box\z@}{\z@}{\axisheight}}\def\CD@CC#1#2#3{\dimen4.5\wd#1 \ifdim
\dimen4>.25\dimen7\dimen4=.25\dimen7\fi\ifdim\dimen4>\CD@zC\dimen4.4\dimen4
\advance\dimen4.6\CD@zC\fi\CD@eG{\dimen4}\dimen5\axisheight\CD@dG{\dimen5}%
\advance\dimen4-\dimen5 \dimen5\dimen4\CD@eG{\dimen5}\advance\dimen0%
\ifPositiveGradient#2\else#3\fi\dimen5 \CD@dG{\dimen4}\advance\dimen1\dimen4 }
\def\CD@eD#1{\expandafter\CD@IK{#1}}\CD@ZA\CD@EK{output is PostScript
dependent}\def\CD@SC{\CD@IK{/bturn {gsave currentpoint currentpoint translate
4 2 roll neg exch atan rotate neg exch neg exch translate } def /eturn {%
currentpoint grestore moveto} def}}\def\CD@gK{\relax\CD@hK\CD@tK{Q}\else
\CD@IK{eturn}\fi} \def\CD@OJ#1{\count@#1\relax\multiply\count@7\advance
\count@16577\divide\count@33154 }\def\CD@fD#1{\expandafter\special{#1}} \def
\CD@LF\CD@fD{pn \the\count@}\CD@fD{pa 0 0}\CD@OJ{\dimen#%
\def\CD@JI{%
\CD@KJ\begingroup\ifdim\dimen7<\dimen6 \dimen2=\dimen6 \dimen6=\dimen7 \dimen
7=\dimen2 \count@\CD@LH\CD@LH\CD@TC\CD@TC\count@\else\dimen2=\dimen7 \fi
\ifdim\dimen6>.01\p@\CD@KI\global\CD@QA\dimen0 \else\global\CD@QA\dimen7 \fi
\endgroup\dimen2\CD@QA\CD@iK\CD@lK{\ifPositiveGradient\else-\fi\dimen6}\CD@iK
\CD@kK{\ifPositiveGradient-\fi\dimen6}\CD@iK\CD@eK{\dimen7}}\def\CD@KI{\CD@hJ
\ifdim\dimen7>1.73\dimen6 \divide\dimen2 4 \multiply\CD@TC2 \else\dimen2=0.%
353553\dimen2 \advance\CD@LH-\CD@TC\multiply\CD@TC4 \fi\dimen0=4\dimen2 \CD@ZG
4\CD@ZG{-2}\CD@ZG2\CD@ZG{-2.5}}\def\CD@AI{\begingroup\count@\dimen0 \dimen2 45%
pt \divide\count@\dimen2 \ifdim\dimen0<\z@\advance\count@\m@ne\fi\ifodd
\count@\advance\count@1\CD@@A\else\CD@y\fi\advance\dimen0-\count@\dimen2
\CD@gE\multiply\dimen0\m@ne\fi\ifnum\count@<0 \multiply\count@-7 \fi\dimen3%
\dimen1 \dimen6\dimen0 \dimen7 3754936sp \ifdim\dimen0<6\p@\def\CD@OG{4000}%
\fi\CD@KJ\dimen2\dimen3\CD@dG{\dimen2}\CD@hJ\multiply\CD@TC-6 \dimen0\dimen2
\CD@ZG1\CD@ZG{0.3}\dimen1\dimen0 \dimen2\dimen3 \dimen0\dimen3 \CD@ZG3\CD@ZG{%
1.5}\CD@ZG{0.3}\divide\count@2 \CD@gE\multiply\dimen1\m@ne\fi\ifodd\count@
\dimen2\dimen1\dimen1\dimen0\dimen0-\dimen2 \fi\divide\count@2 \ifodd\count@
\multiply\dimen0\m@ne\multiply\dimen1\m@ne\fi\global\CD@QA\dimen0\global
\CD@RA\dimen1\endgroup\dimen6\CD@QA\dimen7\CD@RA}\def\CD@OC{255}\let\CD@OG
\def\CD@KJ{\begingroup\ifdim\dimen7<\dimen6 \dimen9\dimen7\dimen7\dimen
6\dimen6\dimen9\CD@@A\else\CD@y\fi\dimen2\z@\dimen3\CD@XH\dimen4\CD@XH\dimen0%
\z@\dimen8=\CD@OG\CD@XH\CD@lC\global\CD@yA\dimen\CD@gE0\else3\fi\global\CD@zA
\dimen\CD@gE3\else0\fi\endgroup\CD@LH\CD@yA\CD@TC\CD@zA}\def\CD@lC{\count@
\dimen6 \divide\count@\dimen7 \advance\dimen6-\count@\dimen7 \dimen9\dimen4
\advance\dimen9\count@\dimen0 \ifdim\dimen9>\dimen8 \CD@@C\else\CD@AC\ifdim
\dimen6>\z@\dimen9\dimen6 \dimen6\dimen7 \dimen7\dimen9 \expandafter
\expandafter\expandafter\CD@lC\fi\fi}\def\CD@@C{\ifdim\dimen0=\z@\ifdim\dimen
9<2\dimen8 \dimen0\dimen8 \fi\else\advance\dimen8-\dimen4 \divide\dimen8%
\dimen0 \ifdim\count@\CD@XH<2\dimen8 \count@\dimen8 \dimen9\dimen4 \advance
\dimen9\count@\dimen0 \CD@AC\fi\fi}\def\CD@AC{\dimen4\dimen0 \dimen0\dimen9
\advance\dimen2\count@\dimen3 \dimen9\dimen2 \dimen2\dimen3 \dimen3\dimen9 }%
\def\CD@ZG#1{\CD@dG{\dimen2}\advance\dimen0 #1\dimen2 }\def\CD@dG#1{\divide#1%
\CD@TC\multiply#1\CD@LH}\def\CD@eG#1{\divide#1\CD@vA\multiply#1\CD@uA}\def
\def\CD@hJ{\dimen6\CD@LH\CD@XH
\multiply\dimen6\CD@LH\dimen7\CD@TC\CD@XH\multiply\dimen7\CD@TC\CD@KJ}\def
\let\CD@GH
\def\CD@GH{\errorcontextlines\m@ne}\fi\ifnum\inputlineno<0 \let
\let\CD@W\empty\let\CD@mD\relax\let\CD@uI\relax\let\CD@vI\relax
\let\CD@zF\relax\message{! Why not upgrade to TeX version 3? (available since
1990)}\else\def\CD@W{ at line \number\inputlineno}\def\CD@mD{ - first occurred%
}\def\CD@uI{\edef\CD@h{\the\inputlineno}\global\let\CD@jB\CD@h}\def\CD@h{9999%
}\def\CD@vI{\xdef\CD@jB{\the\inputlineno}}\def\CD@jB{\CD@h}\def\CD@zF{\ifnum
\CD@h<\inputlineno\edef\CD@CD{\space at lines \CD@h--\the\inputlineno}\else
\edef\CD@CD{\CD@W}\fi}\fi\let\CD@CD\empty\def\CD@YA#1#2{\CD@GH\errhelp=#2%
\expandafter\errmessage{\CD@tA: #1}}\def\CD@KB#1{\begingroup\expandafter
\message{! \CD@tA: #1\CD@CD}\ifnum\CD@XB>\CD@NB\ifnum\CD@CA>\CD@NB\else\ifnum
\CD@lA>\CD@FA\else\ifnum\CD@LB>\CD@FA\advance\CD@XB-\CD@NB\advance\CD@FA-%
\CD@lA\advance\CD@FA1\relax\expandafter\message{! (error detected at row \the
\CD@XB, column \the\CD@FA, but probably caused elsewhere)}\fi\fi\fi\fi
\endgroup}\def\CD@gB#1{{\expandafter\message{\CD@tA\space Warning: #1\CD@W}}}%
\def\CD@CB#1#2{\CD@gB{#1 \string#2 is obsolete\CD@mD}}\def\CD@AB#1{\CD@CB{%
Dimension}{#1}\CD@DE#1\CD@BB\CD@BB}\def\CD@BB{\CD@OA=}\def\CD@@B#1{\CD@CB{%
Count}{#1}\CD@DE#1\CD@OH\CD@OH}\def\CD@OH{\count@=}\def\HorizontalMapLength{%
\CD@AB\HorizontalMapLength}\def\VerticalMapHeight{\CD@AB\VerticalMapHeight}%
\def\VerticalMapDepth{\CD@AB\VerticalMapDepth}\def\VerticalMapExtraHeight{%
\CD@AB\VerticalMapExtraHeight}\def\VerticalMapExtraDepth{\CD@AB
\VerticalMapExtraDepth}\def\DiagonalLineSegments{\CD@@B\DiagonalLineSegments}%
\CD@ZA\CD@KH{\CD@eF\space diagonal line and arrow font not
available}\else\let\CD@KH\relax\fi\def\CD@aG#1#2<#3:#4:#5#6{\begingroup\CD@PA
#3\relax\advance\CD@PA-#2\relax\ifdim.1em<\CD@PA\CD@uA#5\relax\CD@vA#6\relax
\ifnum\CD@uA<\CD@vA\count@\CD@vA\advance\count@-\CD@uA\CD@KB{#4 by \the\CD@PA
}\if#1v\let\CD@CH\CD@JK\edef\tmp{\the\CD@uA--\the\CD@vA,\the\CD@FA}\else
\advance\count@\count@\if#1l\advance\count@-\CD@A\else\if#1r\advance\count@
\CD@A\fi\fi\advance\CD@PA\CD@PA\let\CD@CH\CD@ZE\edef\tmp{\the\CD@NB,\the
\CD@uA--\the\CD@vA}\fi\divide\CD@PA\count@\ifdim\CD@CH<\CD@PA\global\CD@CH
\CD@PA\fi\fi\fi\endgroup}\CD@tG\CD@xE\CD@JD\CD@ID\CD@rG\CD@xI{See the message
above.}\CD@rG\CD@lH{Perhaps you've forgotten to end the diagram before
resuming the text, in\CD@uG which case some garbage may be added to the
diagram, but we should be ok now.\CD@uG Alternatively you've left a blank line
in the middle - TeX will now complain\CD@uG that the remaining \CD@S s are
misplaced - so please use comments for layout.}\CD@rG\CD@hD{You have already
closed too many brace pairs or environments; an \CD@HD\CD@uG command was (%
over)due.}\CD@rG\CD@hH{\CD@dC\space and \CD@HD\space commands must match.}%
\def\CD@jH{\ifnum\inputlineno=0 \else\expandafter\CD@iH\fi}\def\CD@iH{\CD@MD
\CD@GD\crcr\CD@YA{missing \CD@HD\space inserted before \CD@kH- type "h"}%
\CD@lH\enddiagram\CD@AG\CD@kH\par}\def\CD@AG#1{\edef\enddiagram{\noexpand
\CD@rD{#1\CD@W}}}\def\CD@rD#1{\CD@YA{\CD@HD\space(anticipated by #1) ignored}%
\CD@xI\let\enddiagram\CD@SG}\def\CD@SG{\CD@YA{misplaced \CD@HD\space ignored}%
\CD@hH}\def\CD@mC{\CD@YA{missing \CD@HD\space inserted.}\CD@hD\CD@AG{closing
group}}\ifx\DeclareOption\CD@qK\else\ifx\DeclareOption\@notprerr\else
\def\vboxtoz{\vbox to\z@}
\def\scriptaxis#1{\@scriptaxis{$\scriptstyle#1$}}
\def\ssaxis#1{\@ssaxis{$\scriptscriptstyle#1$}}
\def\@scriptaxis#1{\dimen0\axisheight\advance\dimen0-\ss@axisheight\raise
\dimen0\hbox{#1}}\def\@ssaxis#1{\dimen0\axisheight\advance\dimen0-%
\ss@axisheight\raise\dimen0\hbox{#1}}
\let\boldscriptaxis\scriptaxis
\def\boldscript#1{\hbox{$\scriptstyle#1$}}
\def\boldscriptaxis#1{\@scriptaxis{\boldmath$\scriptstyle#1$}}
\def\boldscript#1{\hbox{\boldmath$\scriptstyle#1$}}
\def\raisehook#1#2#3{\hbox{\setbox3=\hbox{#1$\scriptscriptstyle#3$}%
\dimen0\ss@axisheight
\dimen1\axisheight\advance\dimen1-\dimen0
\dimen2\ht3\advance\dimen2-\dimen0%
\advance\dimen2-0.021em\advance\dimen1 #2\dimen2%
\raise\dimen1\box3}}
\def\shifthook#1#2#3{\setbox1=\hbox{#1$\scriptscriptstyle#3$}\dimen0\wd1%
\divide\dimen0 12\CD@zH{\dimen0}
\dimen1\wd1\advance\dimen1-2\dimen0 \advance\dimen1-2\CD@oI\CD@zH{\dimen1}%
\kern#2\dimen1\box1}
\def\@cmex{\mathchar"03}
\def\make@pbk#1{\setbox\tw@\hbox to\z@{#1}\ht\tw@\z@\dp\tw@\z@\box\tw@}\def
\def\CD@qH{\kern0.11em}\def\CD@pH{\kern0%
.35em}
\def\dblvert{\def\CD@rH{\kern.5\PileSpacing}}\def\CD@rH{}
\def\SEpbk{\make@pbk{\CD@qH\CD@rH\vrule depth 2.87ex height -2.75ex width 0.%
95em \vrule height -0.66ex depth 2.87ex width 0.05em \hss}}
\def\SWpbk{\make@pbk{\hss\vrule height -0.66ex depth 2.87ex width 0.05em
\vrule depth 2.87ex height -2.75ex width 0.95em \CD@qH\CD@rH}}
\def\NEpbk{\make@pbk{\CD@qH\CD@rH\vrule depth -3.81ex height 4.00ex width 0.%
95em \vrule height 4.00ex depth -1.72ex width 0.05em \hss}}
\def\NWpbk{\make@pbk{\hss\vrule height 4.00ex depth -1.72ex width 0.05em
\vrule depth -3.81ex height 4.00ex width 0.95em \CD@qH\CD@rH}}
\def\puncture{{\setbox0\hbox{A}\vrule height.53\ht0 depth-.47\ht0 width.35\ht
0 \kern.12\ht0 \vrule height\ht0 depth-.65\ht0 width.06\ht0 \kern-.06\ht0
\vrule height.35\ht0 depth0pt width.06\ht0 \kern.12\ht0 \vrule height.53\ht0
depth-.47\ht0 width.35\ht0 }}
\def\NEclck{\overprint{\raise2.5ex\rlap{ \CD@rH$\scriptstyle\searrow$}}}
\def\NEanti{\overprint{\raise2.5ex\rlap{ \CD@rH$\scriptstyle\nwarrow$}}}
\def\NWclck{\overprint{\raise2.5ex\llap{$\scriptstyle\nearrow$ \CD@rH}}}
\def\NWanti{\overprint{\raise2.5ex\llap{$\scriptstyle\swarrow$ \CD@rH}}}
\def\SEclck{\overprint{\lower1ex\rlap{ \CD@rH$\scriptstyle\swarrow$}}}
\def\SEanti{\overprint{\lower1ex\rlap{ \CD@rH$\scriptstyle\nearrow$}}}
\def\SWclck{\overprint{\lower1ex\llap{$\scriptstyle\nwarrow$ \CD@rH}}}
\def\SWanti{\overprint{\lower1ex\llap{$\scriptstyle\searrow$ \CD@rH}}}
\def\rhvee{\mkern-10mu\greaterthan}
\def\lhvee{\lessthan\mkern-10mu}
\def\dhvee{\vboxtoz{\vss\hbox{$\vee$}\kern0pt}}
\def\uhvee{\vboxtoz{\hbox{$\wedge$}\vss}}
\def\dhlvee{\vboxtoz{\vss\hbox{$\scriptstyle\vee$}\kern0pt}}
\def\uhlvee{\vboxtoz{\hbox{$\scriptstyle\wedge$}\vss}}
\def\dhblvee{\vboxtoz{\vss\boldscript\vee\kern0pt}}
\def\uhblvee{\vboxtoz{\boldscript\wedge\vss}}
\def\rhcvee{\mkern-10mu\succ}
\def\lhcvee{\prec\mkern-10mu}
\def\dhcvee{\vboxtoz{\vss\hbox{$\curlyvee$}\kern0pt}}
\def\uhcvee{\vboxtoz{\hbox{$\curlywedge$}\vss}}
\def\rhvvee{\mkern-13mu\gg}
\def\lhvvee{\ll\mkern-13mu}
\def\dhvvee{\vboxtoz{\vss\hbox{$\vee$}\kern-.6ex\hbox{$\vee$}\kern0pt}}
\def\uhvvee{\vboxtoz{\hbox{$\wedge$}\kern-.6ex \hbox{$\wedge$}\vss}}
\def\rhtriangle{\triangleright\mkern1.2mu}
\def\lhtriangle{\triangleleft\mkern.8mu}
\def\uhtriangle{\vbox{\kern-.2ex \hbox{$\scriptscriptstyle\bigtriangleup$}%
\kern-.25ex}}
\def\dhtriangle{\vbox{\kern-.28ex \hbox{$\scriptscriptstyle\bigtriangledown$}%
\kern-.1ex}}
\def\dhblack{\vbox{\kern-.25ex\nointerlineskip\hbox{$\blacktriangledown$}}}%
\def\uhblack{\vbox{\kern-.25ex\nointerlineskip\hbox{$\blacktriangle$}}}%
\def\dhlblack{\vbox{\kern-.25ex\nointerlineskip\hbox{$\scriptstyle
\blacktriangledown$}}}
\def\uhlblack{\vbox{\kern-.25ex\nointerlineskip\hbox{$\scriptstyle
\blacktriangle$}}}
\uhblack\newarrowhead{littleblack}{\mkern-1mu%
\scriptaxis\blacktriangleright}{\scriptaxis\blacktriangleleft\mkern-2mu}%
\def\rhla{\hbox{\setbox0=\lnchar55\dimen0=\wd0\kern-.6\dimen0\ht0\z@\raise
\axisheight\box0\kern.1\dimen0}}
\def\lhla{\hbox{\setbox0=\lnchar33\dimen0=\wd0\kern.05\dimen0\ht0\z@\raise
\axisheight\box0\kern-.5\dimen0}}
\def\dhla{\vboxtoz{\vss\rlap{\lnchar77}}}
\def\uhla{\vboxtoz{\setbox0=\lnchar66 \wd0\z@\kern-.15\ht0\box0\vss}}
\def\lhlala{\lhla\kern.3em\lhla}
\def\rhlala{\rhla\kern.3em\rhla}
\def\uhlala{\hbox{\uhla\raise-.6ex\uhla}}
\def\dhlala{\hbox{\dhla\lower-.6ex\dhla}}
\def\hhO{\scriptaxis\bigcirc\mkern.4mu} \def\hho{{\circ}\mkern1.2mu}%
\hhO\hhO{\scriptstyle\bigcirc}{\scriptstyle\bigcirc}
\def\rhtimes{\mkern-5mu{\times}\mkern-.8mu}\def\lhtimes{\mkern-.8mu{\times}%
\mkern-5mu}\def\uhtimes{\setbox0=\hbox{$\times$}\ht0\axisheight\dp0-\ht0%
\lower\ht0\box0 }\def\dhtimes{\setbox0=\hbox{$\times$}\ht0\axisheight\box0 }%
\Rightarrow\Leftarrow{\@cmex7F}{\@cmex7E}
\def\twoheaddownarrow{\rlap{$\downarrow$}\raise-.5ex\hbox{$\downarrow$}}
\def\twoheaduparrow{\rlap{$\uparrow$}\raise.5ex\hbox{$\uparrow$}}
\def\ltvee{\mkern-1mu{\lessthan}\mkern.4mu}
\else\newarrowtail{%
boldlittlevee}{\boldscriptaxis\greaterthan}{\mkern-1mu\boldscriptaxis
\lessthan}{\boldscript\vee}{\boldscript\wedge}\fi
\def\rttriangle{\mkern1.2mu\triangleright}
\uhblack\newarrowtail{littleblack}{\scriptaxis
\blacktriangleright\mkern-2mu}{\mkern-1mu\scriptaxis\blacktriangleleft}%
\def\rtla{\hbox{\setbox0=\lnchar55\dimen0=\wd0\kern-.5\dimen0\ht0\z@\raise
\axisheight\box0\kern-.2\dimen0}}
\def\ltla{\hbox{\setbox0=\lnchar33\dimen0=\wd0\kern-.15\dimen0\ht0\z@\raise
\axisheight\box0\kern-.5\dimen0}}
\def\dtla{\vbox{\setbox0=\rlap{\lnchar77}\dimen0=\ht0\kern-.7\dimen0\box0%
\kern-.1\dimen0}}
\def\utla{\vbox{\setbox0=\rlap{\lnchar66}\dimen0=\ht0\kern-.1\dimen0\box0%
\kern-.6\dimen0}}
\def\rtvvee{\gg\mkern-3mu}
\def\ltvvee{\mkern-3mu\ll}
\def\dtvvee{\vbox{\hbox{$\vee$}\kern-.6ex \hbox{$\vee$}\vss}}
\def\utvvee{\vbox{\vss\hbox{$\wedge$}\kern-.6ex \hbox{$\wedge$}\kern\z@}}
\def\ltlala{\ltla\kern.3em\ltla}
\def\rtlala{\rtla\kern.3em\rtla}
\def\utlala{\hbox{\utla\raise-.6ex\utla}}
\def\dtlala{\hbox{\dtla\lower-.6ex\dtla}}
\def\utbar{\vrule height 0.093ex depth0pt width 0.4em}
\let\dtbar\utbar
\def\rtbar{\mkern1.5mu\vrule height 1.1ex depth.06ex width .04em\mkern1.5mu}%
\let\ltbar\rtbar
\def\rthooka{\raisehook{}+\subset\mkern-1mu}
\def\lthooka{\mkern-1mu\raisehook{}+\supset}
\def\rthookb{\raisehook{}-\subset\mkern-2mu}
\def\lthookb{\mkern-1mu\raisehook{}-\supset}
\def\dthooka{\shifthook{}+\cap}
\def\dthookb{\shifthook{}-\cap}
\def\uthooka{\shifthook{}+\cup}
\def\uthookb{\shifthook{}-\cup}
\uthooka\newarrowtail{hookb}%
\CD@qK\newarrowtail{boldhooka}\rthooka\lthooka\dthooka\uthooka
\uthookb\newarrowtail{%
boldhook}\rthooka\lthooka\dthookb\uthooka\else\def\rtbhooka{\raisehook
\boldmath+\subset\mkern-1mu}
\def\ltbhooka{\mkern-1mu\raisehook\boldmath+\supset}
\def\rtbhookb{\raisehook\boldmath-\subset\mkern-2mu}
\def\ltbhookb{\mkern-1mu\raisehook\boldmath-\supset}
\def\dtbhooka{\shifthook\boldmath+\cap}
\def\dtbhookb{\shifthook\boldmath-\cap}
\def\utbhooka{\shifthook\boldmath+\cup}
\def\utbhookb{\shifthook\boldmath-\cup}
\utbhooka\newarrowtail{%
boldhookb}\rtbhookb\ltbhookb\dtbhookb\utbhookb\newarrowtail{boldhook}%
\def\dtsqhooka{\shifthook{}+\sqcap}
\def\ltsqhooka{\mkern-1mu\raisehook{}+\sqsupset}
\def\rtsqhooka{\raisehook{}+\sqsubset\mkern-1mu}
\def\utsqhooka{\shifthook{}+\sqcup}
\uthooka\newarrowtail{C}\rthooka
\hhO\hhO{\scriptstyle\bigcirc}{\scriptstyle\bigcirc}
\Leftarrow\Rightarrow{\@cmex7E}{\@cmex7F}
\def\vfthree{\mid\!\!\!\mid\!\!\!\mid}
\def\vfdashstrut{\vrule width0pt height1.3ex depth0.7ex}
\def\vfthedash{\vrule width\CD@LF height0.6ex depth 0pt}
\def\hfthedash{\CD@AJ\vrule\horizhtdp width 0.26em}
\def\hfdash{\mkern5.5mu\hfthedash\mkern5.5mu}
\def\vfdash{\vfdashstrut\vfthedash}
\def\rightBrace{\d@brace[thick,cmex]}
\def\leftBrace{\u@brace[thick,cmex]}
\def\upperBrace{\r@brace[thick,cmex]}
\def\lowerBrace{\l@brace[thick,cmex]}
\def\rightParenth{\d@parenth[thick,cmex]}
\def\leftParenth{\u@parenth[thick,cmex]}
\def\upperParenth{\r@parenth[thick,cmex]}
\def\lowerParenth{\l@parenth[thick,cmex]}
\let\hEq\rEq
\let\vEq\uEq
\def\labelstyle{
\ifincommdiag
\textstyle
\else
\scriptstyle
\fi}
\let\objectstyle\displaystyle
\CD@hK\message{| running in pdf mode -- diagonal arrows will work
automatically |}\else\message{| >>>>>>>> POSTSCRIPT MODE (DVIPS) IS NOW THE
DEFAULT <<<<<<<<<<<<|}\message{|(DVI mode has not been supported since 1992
\else\message{| >>>>>>>> USING UGLY
OBSOLETE DVI CODE - PLEASE STOP <<<<<<<<<<<<|}\message{|(DVI mode has not been
\title{The classification of homogeneous finite-dimensional permutation structures}
\author{Samuel Braunfeld\\
\small Department of Mathematics\\[-0.8ex]
\small University of Maryland, College park\\[-0.8ex] 
\small College Park, U.S.A.\\
\small\tt sbraunf@umd.edu\\
\and
Pierre Simon \thanks{Supported by NSF (grant no. 1665491) and a Sloan fellowship.}\\
\small Department of Mathematics\\[-0.8ex]
\small University of California, Berkeley\\[-0.8ex]
\small Berkeley, U.S.A.\\
\small\tt simon@math.berkeley.edu}
\begin{document}

\maketitle


\begin{abstract}
We classify the homogeneous finite-dimensional permutation structures, i.e. homogeneous structures in a language of finitely many linear orders, giving a nearly complete answer to a question of Cameron, and confirming the classification conjectured by the first author. The primitive case was proven by the second author using model-theoretic methods, and those methods continue to appear here.
\end{abstract}

\section{Introduction}

A countable relational structure is \textit{homogeneous} if every finite partial automorphism extends to a total automorphism. This notion was introduced by \fraisse to generalize the behavior of the rational order, which is the unique homogeneous linear order. (For the reader unfamiliar with amalgamation and \fraisse limits, we refer to \cite{Cameron}*{\S 2}. For far more information, see \cite{Mac}.) Beginning with the case of partial orders \cite{Schmerl}, a program of classifying homogeneous structures in particular languages developed, which has included graphs \cite{LW}, tournaments \cite{tourn}, directed graphs \cite{dg}, and ongoing work on metrically homogeneous graphs \cite{MHG}.

Along this line, in \cite{Cameron} Cameron classified the homogeneous permutations, which he identified with homogeneous structures consisting of two linear orders. He then posed the problem of classifying, for each $n$, the homogeneous structures consisting of $n$ linear orders, which we call \textit{homogeneous $n$-dimensional permutation structures}. 

A construction producing many new imprimitive examples of such structures was introduced in \cite{Lattice}. The structures produced by a slight generalization of that construction, making use of the subquotient orders introduced in \S 2 in place of linear orders, were put forward as a conjecturally complete catalog in \cite{3dim}, which confirmed the case of 3 linear orders. Here, we confirm the completeness of that catalog as a whole.

\begin{theorem} \label{thm:main}
 $\Gamma$ is a homogeneous finite-dimensional permutation structure iff there is a finite distributive lattice $\Lambda$ such that $\Gamma$ is interdefinable with an expansion of the generic $\Lambda$-ultrametric space by generic subquotient orders, such that every meet-irreducible of $\Lambda$ is the bottom relation of some subquotient order.
\end{theorem}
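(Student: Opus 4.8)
The statement is an equivalence. The implication from right to left amounts to checking that the described objects really are homogeneous permutation structures, while the implication from left to right is the classification itself, which is where essentially all the work lies and where the second author's methods from the primitive case are pushed further. For the right-to-left direction one checks that, for a fixed finite distributive lattice $\Lambda$, the class of finite $\Lambda$-ultrametric spaces carrying the prescribed generic subquotient orders at the meet-irreducibles is a \fraisse class, so that its generic limit exists and is homogeneous by construction (this is the construction of \cite{Lattice}, in the generalized form of \cite{3dim}). That this limit is moreover a \emph{finite-dimensional permutation structure}, i.e.\ interdefinable with finitely many linear orders, again follows from those constructions: since $\Lambda$ is distributive it embeds, by Birkhoff's theorem, as a sublattice of a product of finitely many chains, and along each chain the $\Lambda$-ultrametric data together with the subquotient orders at the meet-irreducibles lying in that chain assemble into a single linear order on $\Gamma$; the finitely many orders so produced recover $d$ and all the subquotient orders, giving the interdefinability. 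The hypothesis that every meet-irreducible carries a subquotient order is exactly what lets each ``level'' of each chain be filled in by a genuine order.

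For the classification, let $\Gamma$ be homogeneous in a language of $n$ linear orders $<_1,\dots,<_n$. The combinatorial skeleton is the lattice $\Lambda$ of $\emptyset$-definable equivalence relations on $\Gamma$, ordered by refinement, with $e\wedge f = e\cap f$ and $e\vee f$ the transitive closure of $e\cup f$; by $\aleph_0$-categoricity $\Lambda$ is finite. The first substantial step is that $\Lambda$ is \emph{distributive}. This uses the permutation structure essentially, since the lattice of definable equivalence relations of an arbitrary $\aleph_0$-categorical structure need not be distributive; I would prove it by a local analysis of how a triple $a,b,c$ behaves with respect to the $E_\lambda$, which is tightly constrained by the fact that each $<_i$ is a single linear order (so on any finite set it becomes, after quotienting by the equivalence relations, a layered, lexicographic object), together with the fact that $\Gamma$ is NIP of finite dp-rank, so that no unboundedly large ``independent'' family of equivalence relations can coexist. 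This is also where the structural dichotomies the second author established for primitive structures get re-imported.

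Next put $d(x,y) = \bigwedge\{\lambda\in\Lambda : x\mathrel{E_\lambda}y\}$; the triangle inequality $d(x,z)\le d(x,y)\vee d(y,z)$ is immediate from the definition of the join, so $(\Gamma,d)$ is a $\Lambda$-ultrametric space. For each meet-irreducible $m\in\Lambda$, with unique cover $m^+$, look at the structure $Q_m$ that $\Gamma$ induces on the subquotient $E_{m^+}/E_m$: it is interpretable in $\Gamma$, homogeneous (being induced on a canonically definable subquotient of a homogeneous structure), still a permutation structure, and primitive, since its $\emptyset$-definable equivalence relations correspond to the interval $[m,m^+]$ of $\Lambda$, which is trivial because $m^+$ covers $m$. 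By the second author's classification of the primitive case, $Q_m$ is then built from generic linear orders — a single one, or a generic pair of independent ones — and so contributes generic subquotient orders all with bottom relation $m$; the expansion of $(\Gamma,d)$ by all of these is interdefinable with $\Gamma$ provided each $<_i$ can be recovered from this data. That recovery, together with the assertion that the age of $\Gamma$ is the full age of the conjectured structure, is the heart of the matter and the step I expect to be hardest. Concretely one must show: (i) for a pair $x,y$ with $d(x,y) = \lambda$, whether $x<_i y$ is determined, uniformly in the isomorphism type of $(x,y)$, by the image of $(x,y)$ in $Q_m$ for a single meet-irreducible $m\le\lambda$ associated to $i$, so that $<_i$ is definable from $d$ and the subquotient orders; and (ii) a one-point-extension argument, run over each meet-irreducible and glued up the lattice, showing that every finite $\Lambda$-ultrametric space with compatible generic subquotient orders embeds into $\Gamma$, so that no finite configuration is omitted. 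Proving (i) is where the primitive-case analysis must be carried out at every level and stitched together across $\Lambda$; it is also what forces the last clause of the statement, since a meet-irreducible carrying no subquotient order would leave $E_m$ undefinable from $d$ and the orders, contradicting that $\Lambda$ was taken to be the entire lattice of $\emptyset$-definable equivalence relations.
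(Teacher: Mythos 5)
Your outline reproduces the broad architecture of the actual proof (finite lattice $\Lambda$ of $\emptyset$-definable equivalence relations, distributivity, the $\Lambda$-ultrametric reduct, subquotient orders attached to meet-irreducibles, the primitive case applied to quotients), but at the three points where the real work happens you either assert what needs proof or leave it explicitly open, so this is not yet a proof. The most serious gap is your treatment of the quotients $Q_m$ on $E_{m^+}/E_m$: you claim they are homogeneous ``being induced on a canonically definable subquotient of a homogeneous structure'' and primitive ``since the interval $[m,m^+]$ is trivial.'' Neither claim is automatic. Quotients of homogeneous structures by $\emptyset$-definable equivalence relations need not be homogeneous, and the structure induced on a class quotient can carry parameter-definable convex equivalence relations and intertwinings that do not come from $\Lambda$ at all; this is precisely why the paper proves topological rank~1 for quotients (Lemma \ref{lemma:toprank1}), excludes definable cuts (Lemma \ref{lemma:nocut}), and only then gets genericity of the quotient orders modulo reversal (Lemma \ref{lemma:genericquotient}, Lemma \ref{lemma:equalrestrictions}, Corollary \ref{corollary:homquotient}), with Lemma \ref{lemma:nameclass} needed to move between classes. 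Without this you cannot invoke the primitive classification for $Q_m$. (Also, the primitive case yields the generic structure with any number of independent orders, not merely ``a single one, or a generic pair.'')

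Second, your distributivity argument is a gesture, not a proof: ``local analysis of triples'' plus ``NIP of finite dp-rank forbids large independent families'' does not identify a mechanism. The paper's proof excludes $M_3$ and $N_5$ using cross-cutting of classes whose join is $\bbone$ (Corollary \ref{cor:twocross}) and triviality of $\dcl$ on minimal orders via thorn-independence (Proposition \ref{prop:trivial acl}, Lemma \ref{lemma:nice lattice}). Third, the steps you label (i) and (ii) — that each $\leq_i$ is recoverable from the metric and the subquotient orders, and that every compatible finite configuration embeds — are exactly the heart of the theorem, and you state them as goals rather than prove them. In the paper, (i) rests on the convexity analysis (every meet-irreducible is $\leq_i$-convex for some $i$, Lemma \ref{lemma:intersectionconvex}) together with the elimination of subquotient orders with meet-reducible bottom relation (Lemmas \ref{lemma:0to1}--\ref{lemma:0toE}, Corollary \ref{corollary:no meet red}), and (ii) is the genericity of the equivalence-relation reduct (Proposition \ref{prop:homreduct}) plus the one-point extension argument for the full language (Lemma \ref{lemma:1pointextension}); none of these is routine, and without them the proposal does not establish the classification.
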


The primitive case, in which there is no $\emptyset$-definable equivalence relation, is foundational for proving the completeness of the catalog. The Primitivity Conjecture of \cite{Lattice} conjectured that, modulo the agreement of certain orders up to reversal, a primitive homogeneous finite-dimensional permutation structure is the \fraisse limit of all finite $n$-dimensional permutation structures, for some $n$. In the case of 2 \cite{Cameron} and 3 \cite{3dim} linear orders, the conjecture was proven by increasingly involved direct amalgamation arguments. A description of the ways linear orders can interact in certain $\omega$-categorical structures, as well as of the closed sets $\emptyset$-definable in products of such structures, was given in \cite{rank1}, and as an application of these model-theoretic results, the Primitivity Conjecture was confirmed.

After reviewing the catalog and the relevant results of \cite{rank1}, our proof breaks into two sections. First, we examine the lattice of $\emptyset$-definable equivalence relations of a homogeneous finite-dimensional permutation structure, and in particular prove that each meet-irreducible element is convex with respect to some linear order in the language and that the reduct to the language of equivalence relations remains homogeneous. In the next section, we complete the classification by proving a finite-dimensional permutation structure may be presented in a language in which all the subquotient orders are generic.


Despite the fact that the catalog gives a simple description of \textit{all} finite-dimensional permutation structures, it is difficult to determine the corresponding catalog for a \textit{fixed number} of linear orders. This is because it is not known what lattices of $\emptyset$-definable equivalence relations can be realized with a given number of orders, nor is it true that one needs at most $n$ orders to represent a structure with at most $2^n$ 2-types. For some discussion and results regarding these problems, see \cite{BraunThesis}*{\S 4.4}.

\begin{problem}
Given a lattice $\Lambda$, what is the minimal $n$ such that $\Lambda$ is isomorphic to the lattice of $\emptyset$-definable equivalence relations of some homogeneous $n$-dimensional permutation structure?

Given a homogeneous finite-dimensional permutation structure $\Gamma$ presented in a language of equivalence relations and subquotient orders, what is the minimal $n$ such that $\Gamma$ is quantifier-free interdefinable with an $n$-dimensional permutation structure? 
\end{problem}

For the following proposition, see \cite{BraunThesis}*{Corollary 4.4.3} for the upper bound and Corollary \ref{cor:dim bound} for the lower bound.

\begin{proposition}
Let $\Lambda$ be a finite distributive lattice, $\Lambda_0$ the poset of
 meet-irreducibles of $\Lambda \bs \set{\bbzero, \bbone}$, $\LL$ a set of chains covering $\Lambda_0$, and $\ell$ the minimum size of any such $\LL$. Let $d_\Lambda$ be the minimum dimension of a homogeneous permutation structure with lattice of $\emptyset$-definable equivalence relations isomorphic to $\Lambda$. Then \[2\ell \leq d_\Lambda \leq |\LL| + \sum_{L \in \LL} \ceil{\log_2(|L|+1)}.\]
\end{proposition}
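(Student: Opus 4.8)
The statement bundles two essentially independent facts, and the plan is to establish each in turn.

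\textbf{Upper bound} (this is \cite{BraunThesis}*{Corollary 4.4.3}). I would prove it by exhibiting, for any chain cover $\LL$ of $\Lambda_0$, a homogeneous permutation structure of the stated dimension with equivalence lattice $\Lambda$. The first input is the elementary lattice fact that a meet-irreducible $E$ of $\Lambda$ has a unique upper cover $E^+$ and that $E^+ \leq y$ for every $y > E$ (inside $[E,y]$ there is a cover of $E$, which must be $E^+$); hence a chain $L : E_1 < \dots < E_m$ of $\LL$ unfolds into a chain of equivalence relations $E_1 \subsetneq E_1^+ \subseteq E_2 \subsetneq \dots \subsetneq E_m \subsetneq E_m^+$, and a subquotient order with bottom $E_i$ can be placed on $E_i^+/E_i$. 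For each $L$ I would build a homogeneous structure $\Gamma_L$ in $1 + \ceil{\log_2(|L|+1)}$ linear orders carrying exactly these equivalence relations and a generic subquotient order on each $E_i^+/E_i$: one \emph{base} order is convex for every relation of the chain and induces a synchronized order on each quotient, while $r = \ceil{\log_2(|L|+1)}$ auxiliary orders equip each point with an independent $r$-bit label whose $i$-th coordinate (possible since $m \leq 2^r-1$) is used to locally reverse the base-induced order on $E_i^+/E_i$, so that the $m$ subquotient orders are decorrelated and become mutually generic. Superposing the $\Gamma_L$, $L \in \LL$, on disjoint blocks of linear orders, the $\emptyset$-definable equivalence relations of the result are the meets of subfamilies of $\bigcup_{L\in\LL}L$; since $\LL$ covers $\Lambda_0$ (all the meet-irreducibles) and $\Lambda$ is distributive, this free superposition produces precisely the elements of $\Lambda$ and no finer lattice. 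Amalgamation for the superposed subquotient orders follows from the results of \S 2 and \cite{rank1}, so by Theorem~\ref{thm:main} the \fraisse limit is homogeneous with equivalence lattice $\Lambda$ and dimension $\sum_{L \in \LL}\bigl(1 + \ceil{\log_2(|L|+1)}\bigr) = |\LL| + \sum_{L \in \LL}\ceil{\log_2(|L|+1)}$.

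\textbf{Lower bound.} By Dilworth's theorem, $\ell$ --- the least number of chains covering the poset $\Lambda_0$ --- equals the maximum size of an antichain of $\Lambda_0$; fix pairwise incomparable meet-irreducibles $E_1, \dots, E_\ell$. Let $\Gamma$ be any homogeneous permutation structure with equivalence lattice $\Lambda$; by Theorem~\ref{thm:main} we may take each $E_j$ to be the bottom relation of a generic subquotient order. The content of Corollary~\ref{cor:dim bound} is that no generic subquotient order can be reconstructed from a single linear order of the language --- a single linear order induces only a \emph{synchronized} order on a quotient, and defining a nontrivial equivalence relation already requires combining at least two orders --- so realizing the subquotient order at $E_j$ forces (at least) two linear orders of the language to be tied to $E_j$: morally, one to witness the definability of $E_j$ by convexity and a transverse one to carry the generic order on the $E_j$-subclasses. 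Finally, when the bottoms are pairwise incomparable the pairs of orders tied to distinct $E_j$ must be disjoint: by the description in \cite{rank1} of the possible interactions of linear orders in an $\omega$-categorical structure and of the closed $\emptyset$-definable subsets of their products, a linear order tied to $E_j$ cannot simultaneously be tied to an incomparable $E_{j'}$ without forcing a relation between them and thereby altering $\Lambda$. Summing over $j$ yields $d_\Lambda \geq 2\ell$.

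\textbf{Main obstacle.} I expect the hard part to be the lower bound, specifically pinning down the exact sense in which a generic subquotient order ``costs'' two private linear orders and ruling out any sharing of such orders among incomparable bottoms; this is precisely where the structural analysis of \cite{rank1} is indispensable. On the upper-bound side the only real work is checking that the explicit superposition amalgamates and realizes $\Lambda$ and no finer lattice of equivalence relations.
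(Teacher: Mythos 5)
The lower bound is where your proposal has a genuine gap. The paper's proof of $2\ell\leq d_\Lambda$ is Corollary \ref{cor:dim bound}, and its entire content is Lemma \ref{lemma:intersectionconvex}: every meet-irreducible of $\Lambda\setminus\{\bbzero,\bbone\}$ is \emph{convex} with respect to at least two of the linear orders in the language; combined with the observation that incomparable equivalence relations cannot be convex with respect to the same order, a maximum antichain of size $\ell$ (Dilworth) then forces at least $2\ell$ orders. Your argument never establishes this. The phrase ``realizing the subquotient order at $E_j$ forces (at least) two linear orders to be tied to $E_j$'' is exactly the assertion that needs proof, and it is asserted only ``morally''; moreover ``tied'' is never defined, and with the reading you give it (one order witnesses convexity, a ``transverse'' one carries the generic order on $E_j$-subclasses) the disjointness step fails: nothing you say rules out a single order serving as the ``transverse carrier'' for several incomparable $E_j$'s. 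The disjointness that actually holds, and that the paper uses, is about convexity. Note also that your appeal to ``the content of Corollary \ref{cor:dim bound}'' misstates that corollary (it is literally the inequality $2\ell\leq d_\Lambda$, not a statement about reconstructing subquotient orders from single orders); citing it outright would be legitimate, since the paper itself proves the proposition by pointing to it, but then your intermediate reasoning is superfluous --- and if you intend to prove it, the missing ingredient is precisely Lemma \ref{lemma:intersectionconvex}, whose proof (via Lemma \ref{lemma:refineconvex} and the density/independence machinery of \cite{rank1}) is the real work.

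On the upper bound the paper offers no proof, citing \cite{BraunThesis}*{Corollary 4.4.3}. Your sketch (one convex ``base'' order per chain plus $\ceil{\log_2(|L|+1)}$ orders obtained by bit-controlled reversals, then superposition of the chains) is in the spirit of that construction, but as written it is only a sketch: the claims that the reversal trick yields a homogeneous structure whose $\emptyset$-definable equivalence relations are exactly the given chain, and that the superposition creates no further definable equivalence relations, are the actual content, and ``amalgamation follows from the results of \S 2 and \cite{rank1}'' is not a justification (the relevant tool would be Theorem \ref{theorem:amalg} together with an interdefinability check, not the analytic results of \cite{rank1}). Since the paper delegates this to the thesis, citing it is the appropriate move; your reconstruction would need substantially more detail to stand on its own.
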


 However, neither bound describes the true behavior of $d_\Lambda$. To exceed the lower bound, let $\Lambda$ be a chain. To beat the upper bound, the lattice consisting of the sum of the free boolean algebra on 2 atoms and a single point may be achieved using only 4 orders.
 
 \medskip
 Although we use model-theoretic terminology throughout this paper, in the setting of a homogeneous structure $M$, many of these notions have equivalent presentations. In particular, an \textit{(n)-type} is an orbit of the action of the automorphism group $Aut(M)$ on $M^n$. Equivalently (because of the homogeneity assumption), it is an isomorphism type of $n$ labeled points. A subset $X \subset M^k$ is \textit{definable} over $A \subset M$ (or $A$-definable) if the pointwise stabilizer of $A$ in $Aut(M)$ fixes $X$ setwise. In particular, $\emptyset$-definability is equivalent to $Aut(M)$-invariance. An element $c\in M$ is definable from a set $A$ if the singleton $\{c\}$ is $A$-definable. If $E\subseteq M^2$ is a $\emptyset$-definable equivalence relation, then those notions carry through naturally to the quotient $M/E$.

\section{The catalog}
For proofs and further discussion of the results presented in this section, see \cite{BraunThesis}*{Chap. 3}.

\begin{definition}
Let $M$ be a structure, equipped with an equivalence relation $E$ and linear order $\leq$. Then we say that \textit{$E$ is $\leq$-convex}, or sometimes that \textit{$\leq$ is $E$-convex}, if every $E$-class is convex with respect to $\leq$.
\end{definition}

The construction of the structures in the catalog proceeds roughly as follows. One starts with a finite distributive lattice $\Lambda$ and constructs the generic object with a lattice of $\emptyset$-definable equivalence relations isomorphic to $\Lambda$. This structure is then expanded by linear orders so that every $\emptyset$-definable equivalence relation is convex with respect to at least one $\emptyset$-definable order and the equivalence relations are then interdefinably replaced by additional linear orders. 

However, we do not work directly with linear orders, but rather with certain partial orders which we call \textit{subquotient orders}, which allow our expansion to be generic in a natural sense.

\begin{definition}
Let $X$ be a structure, and $E \leq F$ equivalence relations on $X$. A \textit{subquotient order from $E$ to $F$} is a partial order on $X/E$ in which two $E$-classes are comparable if and only if they lie in the same $F$-class (note, this pulls back to a partial order on $X$). Thus, this partial order provides a linear order of $C/E$ for each $C \in X/F$. We call $E$ the \textit{bottom relation} and $F$ the \textit{top relation} of the subquotient order.

We say that a subquotient order $<$ from $E$ to $F$ is $G$-convex if $E$ refines $G$ and the projection to $X/E$ of any  $G$-class is $<$-convex.
\end{definition} 

Note a linear order is a subquotient order from $\bbzero$ (equality) to $\bbone$ (the trivial relation). Starting with a linear order $\leq$ convex with respect to $E$ and possibly additional equivalence relations, it can be interdefinably exchanged for its restriction within $E$-classes, a subquotient order from $\bbzero$ to $E$, and the order it induces between $E$-classes, a subquotient order from $E$ to $\bbone$. This process may then be iterated on the resulting subquotient orders until all convexity conditions are removed.

Further, instead of working directly in the language of equivalence relations, we find it convenient to work in the language of $\Lambda$-ultrametric spaces.

\begin{definition}
Let $\Lambda$ be a lattice. A \textit{$\Lambda$-ultrametric space} is a metric space where the metric takes values in $\Lambda$ and the triangle inequality uses the join rather than addition.
\end{definition} 

The following proposition shows that $\Lambda$-ultrametric spaces are equivalent to structures equipped with a lattice of equivalence relations isomorphic to $\Lambda$, or to substructures of such structures. While the lattice of equivalence relations may collapse when passing to a substructure, such as a single point, $\Lambda$-ultrametric spaces have the benefit of keeping $\Lambda$ fixed under passing to substructures.

\begin{proposition}[\cite{BraunThesis}*{Theorem 3.3.2}] \label{prop:lambdaisomorphism}
Fix a finite lattice $\Lambda$. Let $\MM_\Lambda$ be the category of $\Lambda$-ultrametric spaces, with isometries as morphisms. Let $\EE\QQ_\Lambda$ be the category of structures consisting of a set equipped with a family of not-necessarily-distinct equivalence relations $\set{E_\lambda| \lambda \in \Lambda}$ satisfying the following conditions, with embeddings as morphisms.
\begin{enumerate}
\item $\set{E_\lambda}$ forms a lattice.
\item The map $L\colon \lambda \mapsto E_\lambda$ is meet-preserving. In particular, if $\lambda_1 \leq \lambda_2$, then $E_{\lambda_1} \leq E_{\lambda_2}$.
\item $E_\bbzero$ is equality and $E_\bbone$ is the trivial relation. 
\end{enumerate}

Then $\EE\QQ_\Lambda$ is isomorphic to $\MM_\Lambda$. Furthermore, the functors of this isomorphism preserve homogeneity.
\end{proposition}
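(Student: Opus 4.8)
The plan is to make the isomorphism completely explicit: define mutually inverse functors $F\colon \MM_\Lambda \to \EE\QQ_\Lambda$ and $G\colon \EE\QQ_\Lambda \to \MM_\Lambda$ that are both the identity on underlying sets, so that the isomorphism of categories becomes a matter of unwinding definitions and preservation of homogeneity is then automatic. For $F$, given a $\Lambda$-ultrametric space $(X,d)$ I would set $E_\lambda := \{(x,y)\in X^2 : d(x,y)\leq\lambda\}$ for each $\lambda\in\Lambda$. Reflexivity and symmetry of $E_\lambda$ are immediate, and transitivity is exactly the ultrametric triangle inequality, since $d(x,y)\leq\lambda$ and $d(y,z)\leq\lambda$ give $d(x,z)\leq d(x,y)\vee d(y,z)\leq\lambda$. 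Condition~(3) holds because $d(x,y)=\bbzero$ iff $x=y$ while $d$ is $\Lambda$-valued; condition~(2) holds because $(x,y)\in E_{\lambda_1}\cap E_{\lambda_2}$ iff $d(x,y)\leq\lambda_1\wedge\lambda_2$, so $E_{\lambda_1}\cap E_{\lambda_2}=E_{\lambda_1\wedge\lambda_2}$, whence $L$ is meet-preserving and (meets of equivalence relations being intersections) monotone; condition~(1) then follows since $\{E_\lambda\}$ is a finite meet-semilattice with top element $E_\bbone$. On morphisms, an isometry preserves and reflects $d$, hence preserves and reflects each level set $E_\lambda$, i.e.\ is an embedding in $\EE\QQ_\Lambda$.

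For $G$, given $(X,\{E_\lambda\})\in\EE\QQ_\Lambda$ I would set $d(x,y):=\bigwedge\{\lambda\in\Lambda : (x,y)\in E_\lambda\}$. The one point that requires care is that this infimum is attained: by meet-preservation the set $\{\lambda : (x,y)\in E_\lambda\}$ is closed under meets and contains $\bbone$, so by finiteness of $\Lambda$ we get $(x,y)\in E_{d(x,y)}$. Granting this, the metric axioms are routine: $d(x,y)=\bbzero$ forces $(x,y)\in E_\bbzero$, hence $x=y$ by~(3); symmetry is clear; and for the triangle inequality, $(x,y)\in E_{d(x,y)}$ and $(y,z)\in E_{d(y,z)}$ together with monotonicity of $L$ place both pairs in $E_{d(x,y)\vee d(y,z)}$, so transitivity of that relation yields $d(x,z)\leq d(x,y)\vee d(y,z)$. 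On morphisms, $f$ preserves and reflects every $E_\lambda$ iff it preserves and reflects $d(x,y)=\bigwedge\{\lambda : (x,y)\in E_\lambda\}$, so embeddings go to isometries (injectivity is automatic on both sides).

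Finally, $G\circ F=\mathrm{id}$ because $\bigwedge\{\lambda : d(x,y)\leq\lambda\}=d(x,y)$, and $F\circ G=\mathrm{id}$ because $(x,y)\in E_\lambda$ implies $d(x,y)\leq\lambda$ by construction, while conversely $d(x,y)\leq\lambda$ gives $(x,y)\in E_{d(x,y)}\subseteq E_\lambda$ by attainment of the infimum and monotonicity. Hence $\EE\QQ_\Lambda\iso\MM_\Lambda$. Since $F$ and $G$ fix underlying sets, each object $X$ and its image share the same ambient set, the same finite substructures, the same isomorphisms between them, and the same automorphism group, so $X$ is homogeneous as a $\Lambda$-ultrametric space exactly when its image is homogeneous in $\EE\QQ_\Lambda$; this is the assertion that the functors preserve homogeneity. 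The only real obstacle is the attainment of the infimum in the definition of $G$, and this is precisely where hypotheses (1)--(3) — concretely, meet-preservation together with finiteness of $\Lambda$ — are used; note in particular that no join-preservation hypothesis is needed.
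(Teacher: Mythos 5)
Your proof is correct and follows essentially the same route the paper indicates: the two identity-on-points functors $d \mapsto \set{E_\lambda}$ with $E_\lambda = \set{(x,y) \mid d(x,y) \leq \lambda}$ and $\set{E_\lambda} \mapsto d$ with $d(x,y) = \bigwedge\set{\lambda \mid x E_\lambda y}$ are exactly the correspondence described immediately after the proposition, and your verification (including the attainment of the infimum via meet-preservation and finiteness of $\Lambda$, and the transfer of homogeneity since the functors fix underlying sets) fills in the routine details as in the cited source.
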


Given a system of equivalence relations as specified above, we get the corresponding $\Lambda$-ultrametric space by taking the same universe and defining $d(x, y) = \bigwedge \set{\lambda \in \Lambda | x E_\lambda y}$. In the reverse direction, given a $\Lambda$-ultrametric space, we get the corresponding structure of equivalence relations by taking the same universe and defining $E_\lambda = \set{(x,y) | d(x,y) \leq \lambda}$. As we wish to work in a finite relational language, we will usually consider our $\Lambda$-ultrametric spaces to be presented using a relation for each possible distance.

The next proposition explains the special status of distributive lattices.

\begin{proposition} [\cite{BraunThesis}*{Proposition 3.3.5, Corollary 5.2.6}] \label{prop:genericLambdaUltrametric}
Let $\Lambda$ be a finite lattice. Then the class of all finite $\Lambda$-ultrametric spaces forms an amalgamation class if and only if $\Lambda$ is distributive.
\end{proposition}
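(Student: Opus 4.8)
The two directions will be handled separately, and both reduce to the amalgamation property (AP): the hereditary property is immediate; the class is nonempty and has only countably many isomorphism types (finite structures over the finite relational language with one binary symbol $R_\lambda$ per $\lambda\in\Lambda$, where $R_\lambda(x,y)$ abbreviates $d(x,y)\leq\lambda$); and the joint embedding property holds for every $\Lambda$ whatsoever — given finite $\Lambda$-ultrametric spaces $B,C$, place them in disjoint copies and set $d(b,c)=\bbone$ for all $b\in B$, $c\in C$, so that every triangle inequality involving a cross-edge holds trivially. Thus the whole content is that AP holds iff $\Lambda$ is distributive.

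For ``$\Lambda$ distributive $\Rightarrow$ AP'' the plan is to use the \emph{canonical amalgam}. Given isometric embeddings $A\hookrightarrow B$ and $A\hookrightarrow C$, take $B\sqcup_A C$, keep the distances internal to $B$ and to $C$, and for $b\in B\setminus A$, $c\in C\setminus A$ define $d(b,c)=\bigwedge_{a\in A}\bigl(d(b,a)\vee d(a,c)\bigr)$, the least value allowed by the triangle inequalities through $A$ (for $A=\emptyset$ this empty meet is $\bbone$, recovering the JEP construction). I would then check this is a $\Lambda$-valued pseudometric: symmetry and vanishing on the diagonal are clear, and for the triangle inequality there are three cases, according to how the vertices outside $A$ split between $B$ and $C$. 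In each case, pushing a join inside a meet via the distributive law $x\vee\bigwedge_i y_i=\bigwedge_i(x\vee y_i)$ reduces the inequality to triangle inequalities already available in $A$, $B$, or $C$; e.g.\ for $a_0\in A$, $b\in B\setminus A$, $c\in C\setminus A$ one gets $d(a_0,b)\vee d(b,c)=\bigwedge_a\bigl(d(a_0,b)\vee d(b,a)\vee d(a,c)\bigr)\geq\bigwedge_a\bigl(d(a_0,a)\vee d(a,c)\bigr)\geq d(a_0,c)$, and the rest are analogous. Finally I quotient by the relation ``$d(x,y)=\bbzero$'' (an equivalence relation once the triangle inequality is established) to obtain a genuine finite $\Lambda$-ultrametric space $D$; since $d$ restricts to the honest metrics of $B$ and of $C$, no two distinct points of either are identified, so $B$ and $C$ embed isometrically into $D$ over $A$.

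For the converse I would invoke the classical fact that a non-distributive lattice has a sublattice isomorphic to $M_3$ or to $N_5$, and exhibit explicit non-amalgamable spans. If $\Lambda$ has a sublattice $\set{0',x_1,x_2,x_3,1'}\cong M_3$ (so $0'<x_i<1'$ and $x_i\vee x_j=1'$, $x_i\wedge x_j=0'$ for $i\neq j$, joins and meets taken in $\Lambda$), let $A=\set{p_1,p_2,p_3}$ with $d(p_i,p_j)=1'$, let $B=A\cup\set{q}$ with $d(q,p_i)=x_i$, and let $C=A\cup\set{r}$ with $d(r,p_1)=x_1$, $d(r,p_2)=x_2$, $d(r,p_3)=1'$. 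A direct check shows $A,B,C$ are finite $\Lambda$-ultrametric spaces with $A$ embedding isometrically into $B$ and $C$. In any amalgam $D$, the triangle inequalities through $p_1$ and $p_2$ force $d(q,r)\leq x_1\wedge x_2=0'$, and then the triangle inequality through $q$ gives $1'=d(r,p_3)\leq d(r,q)\vee d(q,p_3)\leq 0'\vee x_3=x_3$, contradicting $x_3<1'$. If instead $\Lambda$ has a sublattice $\set{0',a,b,c,1'}\cong N_5$ with $0'<a<b<1'$ and $c$ incomparable to $a,b$ (so $a\vee c=b\vee c=1'$, $a\wedge c=b\wedge c=0'$), one uses $A=\set{p_1,p_2}$ with $d(p_1,p_2)=1'$, $B=A\cup\set{q}$ with $d(q,p_1)=c$, $d(q,p_2)=b$, and $C=A\cup\set{r}$ with $d(r,p_1)=c$, $d(r,p_2)=a$; any amalgam forces $d(q,r)\leq c\wedge b=0'$, whence $b=d(q,p_2)\leq d(q,r)\vee d(r,p_2)\leq 0'\vee a=a$, impossible. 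Either way AP fails.

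The three-case triangle computation in the canonical amalgam is routine. The step I expect to take the most care is confirming that non-distributivity is the \emph{exact} dividing line: that failure of AP can in every non-distributive case be witnessed by one of these small, hand-verifiable spans — which is precisely what the $M_3$/$N_5$ dichotomy supplies — while distributivity is exactly the property that lets joins be pushed through meets, keeping the canonical amalgam inside the class.
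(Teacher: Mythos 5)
Your proof is correct: the canonical amalgam $d(b,c)=\bigwedge_{a\in A}\bigl(d(b,a)\vee d(a,c)\bigr)$ (with identification of points at distance $\bbzero$, which is legitimate since amalgamation need not be strong) together with the $M_3$/$N_5$ obstructions settles both directions. Note that the paper itself does not prove this proposition but imports it from \cite{BraunThesis}*{Proposition 3.3.5, Corollary 5.2.6}, and your argument is essentially the standard one given there, so no further comparison is needed.
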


The following theorem states that we may take the generic $\Lambda$-ultrametric space and expand it by the natural analogue for subquotient orders of generic linear orders.

\begin{theorem} [\cite{BraunThesis}*{Theorem 4.2.3}] \label{theorem:amalg} 
Let $\Lambda$ be a finite distributive lattice. Let $\AA^*$ be the class of finite structures $(A,d,\set{<_{E_i}}_{i=1}^n)$ satisfying the following conditions:
\begin{itemize}
\item $(A,d)$ is  a $\Lambda$-ultrametric space;
\item $<_{E_i}$ is a subquotient order with bottom relation $E_i$, for some meet-irreducible $E_i \in \Lambda$, and top relation $F_i \in \Lambda$.
\end{itemize}
Then $\AA^*$ is an amalgamation class.
\end{theorem}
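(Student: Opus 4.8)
The plan is to verify the amalgamation property; heredity, joint embedding (the case $A=\emptyset$), and having only countably many isomorphism types are routine, using that the signature is finite relational and that a substructure of an object of $\AA^*$ is again one: a substructure of a $\Lambda$-ultrametric space is a $\Lambda$-ultrametric space, and the restriction of a subquotient order from $E_i$ to $F_i$ is again such, since the partition into $F_i$-classes is read off from the metric. So fix embeddings $A\hookrightarrow B$ and $A\hookrightarrow C$ in $\AA^*$; I will produce $D\in\AA^*$ with $B$ and $C$ embedded over $A$.

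\emph{Step one: amalgamate the underlying $\Lambda$-ultrametric spaces.} Since $\Lambda$ is distributive, Proposition~\ref{prop:genericLambdaUltrametric} yields a finite $\Lambda$-ultrametric space $D$ containing $(B,d_B)$ and $(C,d_C)$ as substructures and amalgamating them over $(A,d_A)$. The only feature of this $D$ I will use is that distances inside $B$ and inside $C$ are unchanged, and this already controls the blocks: since $F_i$-equivalence in a $\Lambda$-ultrametric space means exactly $d(x,y)\le F_i$, every $F_i$-class $Q$ of $D$ has the property that $Q\cap B$ is an $F_i$-class of $B$ (or empty), $Q\cap C$ is an $F_i$-class of $C$ (or empty), and $Q\cap A=(Q\cap B)\cap(Q\cap C)$ is an $F_i$-class of $A$ (or empty); likewise modulo $E_i$.

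\emph{Step two: install the subquotient orders.} Fix $i$. On $D/E_i$ the relation $<_{E_i}$ must be a disjoint union of linear orders, one on each $F_i$-class $Q$, and on the $B$-part (resp. $C$-part) of $Q$ it is forced to agree with $<_{E_i}^B$ (resp. $<_{E_i}^C$), since those distances did not move. By step one, $(Q\cap B)/E_i$ and $(Q\cap C)/E_i$ are finite linear orders overlapping exactly in $(Q\cap A)/E_i$, on which both induce the single linear order that $<_{E_i}^A$ determines there; so I amalgamate these two linear orders over their common suborder -- always possible, e.g.\ by placing, in each gap of $(Q\cap A)/E_i$, the new $B$-classes before the new $C$-classes. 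Doing this independently for every $F_i$-class $Q$ and every index $i$ is legitimate precisely because the definition of $\AA^*$ imposes no interaction among the various $<_{E_i}$. The result is, for each $i$, a partial order on $D/E_i$ whose comparable pairs are exactly those lying in a common $F_i$-class, restricting to $<_{E_i}^B$ on $B$ and to $<_{E_i}^C$ on $C$ -- that is, a subquotient order from $E_i$ to $F_i$. Since $\Lambda$ is fixed, each $E_i$ is still meet-irreducible, so $D\in\AA^*$ and $B,C$ embed into $D$ over $A$, as required.

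The step I expect to carry the weight is not a single hard estimate but the bookkeeping in step two: one must be sure that forcing $<_{E_i}$ to extend the old orders, to respect the possibly enlarged $F_i$-classes of $D$, and to do so simultaneously for all $i$, creates no conflict. The apparent difficulty dissolves once the block analysis of step one is in hand together with the observation that $\AA^*$ links the orders only through the shared metric, after which nothing remains but the (immediate) amalgamation property of finite linear orders. Distributivity of $\Lambda$ is used exactly once, via Proposition~\ref{prop:genericLambdaUltrametric}.
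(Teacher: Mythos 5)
There is a genuine gap at the start of your Step two, and it sits exactly where the hypothesis your argument never uses---meet-irreducibility of the bottom relations $E_i$---has to enter. You assert that in the metric amalgam $D$, for each $F_i$-class $Q$, the linear orders $(Q\cap B)/E_i$ and $(Q\cap C)/E_i$ overlap exactly in $(Q\cap A)/E_i$, while stressing that the only feature of $D$ you use is that distances inside $B$ and inside $C$ are unchanged. That feature is not enough: a perfectly legitimate amalgam of the underlying $\Lambda$-ultrametric spaces may glue an $E_i$-class of $B\setminus A$ onto an $E_i$-class of $C\setminus A$ that does not meet $A$, and then the two orders need not be jointly extendable. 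Concretely, let $\Lambda$ be the chain $\bbzero<E<\bbone$, $A=\set{a}$, $B=\set{a,b}$, $C=\set{a,c}$ with $d(a,b)=d(a,c)=\bbone$, and one subquotient order from $E$ to $\bbone$ with $b/E<a/E$ in $B$ and $a/E<c/E$ in $C$. Setting $d_D(b,c)=E$ satisfies the join triangle inequality, so it is a valid metric amalgam; but then $b/E=c/E$ in $D$ while the orders would force $b/E<a/E<c/E$, so no subquotient order on $D$ extends both. Proposition \ref{prop:genericLambdaUltrametric} only gives you \emph{some} amalgam; it does not hand you one with the block property your Step two relies on.

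The missing idea is to take the canonical amalgam, $d_D(b,c)=\bigwedge_{a\in A}(d_B(b,a)\join d_C(a,c))$ for $b\in B\setminus A$, $c\in C\setminus A$ (and $d_D(b,c)=\bbone$ when $A=\emptyset$), whose triangle inequality is where distributivity is used, and then to invoke meet-irreducibility: in a finite distributive lattice a meet-irreducible element is meet-prime, so $d_D(b,c)\leq E_i$ forces $d_B(b,a)\join d_C(a,c)\leq E_i$ for some $a\in A$, i.e.\ any $E_i$-class of $D$ meeting both $B\setminus A$ and $C\setminus A$ already meets $A$. Only with this does your overlap claim hold; after that, the class-by-class amalgamation of finite linear orders, done independently for each $i$, goes through as you wrote it. This is precisely the point flagged in the Remark following Definition \ref{def:generic}: for meet-reducible bottom relations the metric amalgamation can force identification of $E$-classes, and the theorem would be false without the meet-irreducibility hypothesis---so a correct proof cannot avoid using it, whereas yours only mentions it to note that $D$ lies in $\AA^*$.
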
 

\begin{definition} \label{def:generic}
Given a finite distributive lattice $\Lambda$, the {\em generic $\Lambda$-ultrametric space} $\Gamma$ is the \fraisse limit of all finite $\Lambda$-ultrametric spaces.

Suppose $\Gamma^*$ is $\Gamma$ equipped with some subquotient orders. We will say those subquotient orders are \textit{generic} if $\Gamma^*$ may be constructed as a \fraisse limit of a class $\AA^*$ as from Theorem \ref{theorem:amalg}.
\end{definition}

\begin{remark}
The condition that the bottom relation of a generic subquotient order be meet-irreducible is analogous to the condition that for a \fraisse class to be expandable by a generic linear order, it must have strong amalgamation. For if $E$ is meet-irreducible, then our amalgamation procedure from Proposition \ref{prop:genericLambdaUltrametric} never forces the identification of $E$-classes. However, if $E = F \meet F'$, then any $E$-class is the unique intersection of some $F$-class with some $F'$-class, so amalgamation may force the identification of $E$-classes. 
\end{remark}

Although the structures produced by Theorem \ref{theorem:amalg} are presented in the language of equivalence relations and subquotient orders, we now give a sufficient condition for them to be representable in a language of linear orders.

\begin{proposition} [\cite{BraunThesis}*{Proposition 3.4.13}]
Let $\AA^*$ be a class as from Theorem \ref{theorem:amalg}, such that every meet-irreducible of $\Lambda$ is the bottom relation of some subquotient order. Then the \fraisse limit of $\AA^*$ is interdefinable with a finite-dimensional permutation structure.
\end{proposition}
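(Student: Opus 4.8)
The plan is to show that the \fraisse limit $\Gamma^*$ of $\AA^*$ is quantifier-free interdefinable with a structure in a finite language all of whose relations are linear orders. Concretely, I will produce finitely many linear orders on $\Gamma^*$, each quantifier-free $\emptyset$-definable in $\Gamma^*$, from which every relation of $\Gamma^*$ (the distance relations, equivalently the $E_\lambda$, together with the subquotient orders $<_{E_i}$) is quantifier-free $\emptyset$-definable. The reduct of $\Gamma^*$ to these linear orders is then the desired finite-dimensional permutation structure: it lives on the same universe, is quantifier-free interdefinable with the homogeneous structure $\Gamma^*$, and is therefore itself homogeneous, since an isomorphism between finite substructures in the order language is a finite partial isomorphism of $\Gamma^*$ and so extends to an automorphism.

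Two reductions make the target manageable. First, since $\Lambda$ is distributive, every $\lambda$ is the meet of the meet-irreducibles above it, so $E_\lambda = \bigcap \set{E_m : m \text{ meet-irreducible}, \lambda \leq m}$; as $E_\bbzero$ is equality and $E_\bbone$ is the trivial relation, it is enough to recover the $E_m$ for meet-irreducible $m$ with $\bbzero < m < \bbone$. Once all such $E_m$ are recovered, every top relation $E_{F_i}$ is also recovered (again as an intersection of $E_m$'s), and then $<_{E_i}$ is recovered from any $E_i$-convex linear order $\ell_i$ on $\Gamma^*$ that agrees with $<_{E_i}$ on pairs of $E_i$-classes lying in a common $F_i$-class: indeed $[x]_{E_i} <_{E_i} [y]_{E_i}$ iff $x\mathrel{\ell_i} y$, $x E_{F_i} y$, and $\neg(x E_i y)$. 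Second, the reversal trick recovers $E_i$ itself: if $\ell_i$ is a linear order with every $E_i$-class $\ell_i$-convex, and $\ell_i'$ is obtained from $\ell_i$ by reversing it inside each $E_i$-class (and leaving it unchanged elsewhere), then $\ell_i'$ is again a linear order, it is quantifier-free definable from $\ell_i$ and $E_i$, and $x E_i y \iff x = y \vee (x\mathrel{\ell_i} y \wedge y\mathrel{\ell_i'}x) \vee (y\mathrel{\ell_i}x \wedge x\mathrel{\ell_i'}y)$. Hence it suffices to produce, for each subquotient order $<_{E_i}$ of $\Gamma^*$ (with $E_i$ meet-irreducible and top relation $F_i$), a single quantifier-free $\emptyset$-definable linear order $\ell_i$ on $\Gamma^*$ that is $E_i$-convex and $F_i$-convex and refines the pullback of $<_{E_i}$; the orders $\ell_i$ and $\ell_i'$, over all $i$, then do the job.

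To build $\ell_i$ I would induct on $|\Lambda|$. Extending $<_{E_i}$ to a linear order on $\Gamma^*$ amounts to choosing (a) a linear order of each $E_i$-class and (b) a linear order of $\Gamma^*/E_{F_i}$ extending the order that $<_{E_i}$ induces between $F_i$-classes; then $\ell_i$ is the lexicographic combination "$F_i$-class first via (b), $E_i$-class within it via $<_{E_i}$, point within an $E_i$-class via (a)", which is $E_i$- and $F_i$-convex. For (a): by homogeneity and genericity every $E_i$-class is a copy of the generic $[\bbzero, E_i]$-ultrametric space equipped with the restrictions of the subquotient orders of $\Gamma^*$; writing $\Lambda$ as the lattice of down-sets of its poset of join-irreducibles, one checks that every meet-irreducible of the distributive lattice $[\bbzero, E_i]$ has the form $E_i \meet p$ for some meet-irreducible $p$ of $\Lambda$, hence is the bottom relation of the restriction of $<_{E_p}$, so the inductive hypothesis (valid since $|[\bbzero,E_i]| < |\Lambda|$) supplies a $\emptyset$-definable linear order on each $E_i$-class, uniformly. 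For (b): $\Gamma^*/E_{F_i}$ is (generically) a $[F_i,\bbone]$-ultrametric space with induced subquotient orders, and $|[F_i,\bbone]| < |\Lambda|$ unless $F_i = \bbone$ (in which case (b) is vacuous), so induction applies again. Collecting $\ell_i,\ell_i'$ over the finitely many subquotient orders of $\Gamma^*$ yields the required finite family, and the reductions above finish the argument.

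The step I expect to be the main obstacle is the passage to the quotient in (b): one must verify that the subquotient orders of $\Gamma^*$ descend to $\Gamma^*/E_{F_i}$ in such a way that their bottom relations cover all meet-irreducibles of $[F_i,\bbone]$ and that the descended orders really are $\emptyset$-definable in the quotient structure — the delicate point being "top-relation matching", since a subquotient order with bottom $E_q$ and top $E_{F_q}$ restricts cleanly to the quotient by $E_{F_i}$ only when $F_q \geq F_i$, which is not guaranteed by the hypothesis. Making this induction go through — either by showing the top relations can always be arranged compatibly, or by reorganizing the recursion so as to sidestep incompatible quotients — is the technical core; the reductions of the second paragraph and the lexicographic and reversal bookkeeping are routine.
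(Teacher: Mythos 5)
The paper itself gives no proof of this proposition (it is quoted from \cite{BraunThesis}*{Proposition 3.4.13}), so your attempt has to be judged on its own terms. Your overall plan is the expected one and its skeleton is sound: recover each $E_\lambda$ as an intersection of meet-irreducible relations (distributivity), recover each meet-irreducible $E_i$ from a pair of $E_i$-convex linear orders agreeing up to reversal inside $E_i$-classes, recover $<_{E_i}$ from such an order together with $E_i$ and $E_{F_i}$, and build the convex order $\ell_i$ lexicographically from $<_{E_i}$, an order on each $E_i$-class, and an order on $\Gamma^*/E_{F_i}$. The lattice-theoretic facts you invoke are correct: by Birkhoff duality every meet-irreducible of $[\bbzero,E_i]$ is of the form $E_i \meet p$ with $p$ meet-irreducible in $\Lambda$ (and the corresponding restricted subquotient order has strictly larger top on the class), and the meet-irreducibles of $[F_i,\bbone]$ are exactly the meet-irreducibles of $\Lambda$ lying above $F_i$.

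The genuine gap is not the one you flag. You apply the induction hypothesis to the structure induced on an $E_i$-class and to the quotient $\Gamma^*/E_{F_i}$, but you never verify that these are again \fraisse limits of classes as in Theorem \ref{theorem:amalg} (i.e.\ that the restricted, respectively descended, subquotient orders are generic); this is precisely the kind of point the paper treats with care elsewhere (\S 5 notes that homogeneity of quotients is ``not immediate''), and for the class case you must additionally discard those restrictions whose bottom $E_p \meet E_i$ is meet-reducible in $[\bbzero,E_i]$ and re-check genericity of the remaining family. By contrast, the ``top-relation matching'' issue you single out as the technical core is a non-issue: for step (b) you only need to descend subquotient orders whose bottom relation is a meet-irreducible of $[F_i,\bbone]$, hence a meet-irreducible of $\Lambda$ above $F_i$; for such an order both bottom and top lie in $[F_i,\bbone]$, so it is verbatim a subquotient order on $\Gamma^*/E_{F_i}$, and the hypothesis of the proposition supplies one for every meet-irreducible of the interval. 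So as written the proof is incomplete (by your own admission the inductive step is left unresolved), and the missing work is the extension-property/genericity verification for the induced class and quotient structures — doable by lifting one-point extensions to $\Gamma^*$ and using the extension property of $\AA^*$ — rather than the descent problem you identify; once that is supplied, your argument goes through and in fact yields quantifier-free interdefinability.
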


Finally, the structures in our catalog are constructed as follows. Let $\Lambda$ be a finite distributive lattice. Take the generic $\Lambda$-ultrametric space, and expand by generic subquotient orders with meet-irreducible bottom relation, such that every meet-irreducible of $\Lambda$ is the bottom relation of at least one subquotient order.

\section{Linear orders in $\omega$-categorical structures}

In this section, we review material from \cite{rank1} leading to the proof of the Primitivity Conjecture, as well as introducing definitions and results that will be used later. For proofs and further discussion of the results presented in this section, see \cite{rank1}, particularly \S 3.

\begin{notation}
Throughout this section, we will assume that $(V; \leq, \cdots)$ is a $\emptyset$-definable substructure of an $\omega$-categorical structure, equipped with a distinguished $\emptyset$-definable linear order $\leq$, and possibly other $\emptyset$-definable structure. Similarly for $(V_i; \leq_i, \cdots)$.
\end{notation}

We first define the sorts of linear orders we will be concerned with, and the ways they can interact.

\begin{definition} \label{def:minimal}
We say $(V; \leq, \cdots)$ is \textit{weakly transitive} if it is dense and the set of realizations of any 1-type $p(x)$ over $\emptyset$ concentrating on $V$ is dense in $V$.

We say $(V; \leq, \cdots)$ has \textit{topological rank 1} if it does not admit any parameter-definable $\leq$-convex equivalence relation with infinitely many infinite classes.

Finally, $(V; \leq, \cdots)$ is \textit{minimal} if it is weakly transitive and has topological rank 1.
\end{definition}

\begin{definition} \label{def:cut}
By a \textit{cut} in a dense order $(V, \leq)$, we mean an initial segment of it which is neither empty nor the whole of $V$ and has no last element. We denote by $\overline{V}$ the set of parameter-definable cuts of $V$.
\end{definition}

\begin{definition} \label{def:intertwined independent}
We say two $\emptyset$-definable weakly transitive orders $(V_0; \leq_0, \cdots)$ and $(V_1; \leq_1, \cdots)$ are \textit{intertwined} if there is a $\emptyset$-definable non-decreasing map $f\colon V_0 \to \overline{V_1}$.

If $(V_0; \leq_0, \cdots)$ and $(V_1; \leq_1, \cdots)$ are minimal, we say they are \textit{independent} if $V_0$ is intertwined with neither $V_1$ nor its reverse.
\end{definition}

The definition of independence in \cite{rank1} is on the face of it stronger, however Lemma 3.19 of that paper states that for minimal orders, independence is equivalent to the definition that we give here. The stronger property will be useful for us though, and we record it in the following lemma.

\begin{lemma} [follows from \cite{rank1}*{Lemma 3.19}] \label{lemma:ind over par}
 Let $(V_0; \leq_0, \cdots)$,$(V_1; \leq_1, \cdots)$ be minimal independent linear orders. Let $X_0, X_1$ be infinite $A$-definable subsets of $V_0, V_1$ respectively, transitive over $A$. Then $X_0, X_1$ are independent over $A$.
\end{lemma}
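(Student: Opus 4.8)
The plan is to read this off from the original, stronger form of independence used in \cite{rank1}, together with the equivalence between the two forms. Recall that in \cite{rank1} independence of minimal orders is defined by a clause of roughly the following shape: $(V_0;\leq_0,\cdots)$ and $(V_1;\leq_1,\cdots)$ are \emph{independent} precisely when, for \emph{every} parameter set $A$ and \emph{all} infinite $A$-definable subsets $X_0\subseteq V_0$, $X_1\subseteq V_1$ that are transitive over $A$, the pair $X_0,X_1$ is independent over $A$ (in particular, there is no $A$-definable non-decreasing map from $X_0$ into $\overline{X_1}$ as in Definition \ref{def:cut}, nor into the cuts of the reverse of $X_1$, plus the further conditions bundled into that notion). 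So the statement we want is, modulo bookkeeping, an instance of that definition. What makes it available here is \cite{rank1}*{Lemma 3.19}, which establishes that for minimal orders this stronger formulation is equivalent to the weaker one recorded in Definition \ref{def:intertwined independent} — namely that $V_0$ be intertwined with neither $V_1$ nor its reverse.

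The argument then runs: by hypothesis $(V_0;\leq_0,\cdots)$ and $(V_1;\leq_1,\cdots)$ are minimal and independent in the weak sense of Definition \ref{def:intertwined independent}; by \cite{rank1}*{Lemma 3.19} they are therefore independent in the strong sense; and specializing that strong definition to the given $A$, $X_0$, $X_1$ — which are exactly infinite, $A$-definable, and transitive over $A$, as required — yields that $X_0,X_1$ are independent over $A$. If the strong notion in \cite{rank1} happens to be phrased only over $\emptyset$ or over models rather than over an arbitrary small parameter set $A$, one first passes to $A$ by naming $A$ and noting that $X_0,X_1$ remain infinite transitive-over-$A$ subsets of minimal orders, so the same equivalence applies relative to $A$; this is the only extra wrinkle.

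\textbf{Main obstacle.} There is essentially no substantive obstacle: all of the content is already contained in \cite{rank1}*{Lemma 3.19}, and the present lemma merely isolates the consequence of it in the exact form the later amalgamation arguments will use. The one thing that genuinely requires care is making sure the phrase ``independent over $A$'' in the conclusion is being used in the same sense as in \cite{rank1}, and that the hypotheses ``infinite $A$-definable, transitive over $A$'' match those under which \cite{rank1}'s strong definition quantifies — a matter of aligning definitions rather than of proof.
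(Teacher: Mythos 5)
Your proposal matches the paper exactly: the paper gives no separate argument, but simply notes that the definition of independence in \cite{rank1} is on its face the stronger, parameter-quantified one, cites \cite{rank1}*{Lemma 3.19} for the equivalence with the weak (non-intertwining) definition for minimal orders, and records the specialization to $A$, $X_0$, $X_1$ as the lemma. Your extra remarks about aligning the sense of ``independent over $A$'' and the hypotheses under which the strong form quantifies are precisely the only bookkeeping involved, so the proposal is correct and takes essentially the same route.
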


The following proposition is a special case of Proposition 3.23 in \cite{rank1}.

\begin{proposition}\label{prop:determined reduct}
 Let $(M; \leq_1, \ldots, \leq_n, \cdots)$ be $\omega$-categorical, transitive, with each $\leq_i$ a linear order of topological rank 1. Assume that no two distinct orders are intertwined. Then
the reduct of $M$ to the language $L_0 = \set{\leq_1 , \ldots, \leq_n}$ is completely determined up to
isomorphism by whether, $\leq_i, \leq_j$ are  equal, reverse of each other, or independent, for any $i, j \leq n$.
\end{proposition}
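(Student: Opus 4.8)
The plan is to pin down the $L_0$-reduct of $M$ as a \fraisse limit, so that its isomorphism type depends only on the listed combinatorial data. First, each $(M;\le_i,\dots)$ is minimal: since $M$ is $\omega$-categorical and transitive, $\le_i$ has no endpoints (an endpoint would be $\mathrm{Aut}(M)$-fixed) and is dense (an immediate-successor function would be $\emptyset$-definable and would force infinitely many $2$-types, contradicting $\omega$-categoricity), so $M$ is a dense linear order without endpoints under each $\le_i$; hence $M$ is weakly transitive (the unique $1$-type over $\emptyset$ is realized on all of $M$, which is dense), and $\le_i$ has topological rank $1$ by hypothesis. Second, reduce to the pairwise-independent case: whenever $\le_i$ equals $\le_j$ or its reverse, $\le_j$ is quantifier-free $\emptyset$-definable from $\le_i$ in $L_0$, so the $L_0$-reduct of $M$ is interdefinable with the reduct obtained by deleting $\le_j$, and an isomorphism of the smaller reducts lifts back. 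Iterating, we are left with orders that are pairwise distinct up to reversal, and by the non-intertwining hypothesis (read, as the trichotomy in the statement requires, as forbidding intertwining of any $\le_i$ with $\le_j$ or its reverse) these are pairwise independent. It thus suffices to prove: \emph{if $\le_1,\dots,\le_n$ are pairwise independent minimal orders on an $\omega$-categorical transitive $M$, then $(M;\le_1,\dots,\le_n)$ is isomorphic to $P_n$, the \fraisse limit of the class of all finite structures carrying $n$ linear orders.} Since $P_n$, and the record of which orders were deleted or reversed, is determined by the combinatorial data, this yields the proposition.

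To prove $(M;\le_1,\dots,\le_n)\cong P_n$, I would verify that this reduct --- again $\omega$-categorical --- has age the class of all finite $n$-fold linear orders and is ultrahomogeneous, whence it equals $P_n$ by uniqueness of \fraisse limits (working with the countable model). Both facts follow from the one-point extension property: for every finite $\bar a\subseteq M$ and every extension of the $L_0$-structure on $\bar a$ to an $n$-fold linear order on $\bar a\cup\{b\}$ with $b$ new, some $b'\in M$ realizes it over $\bar a$. Unwinding, the requirement on $b$ in $\le_i$ is exactly that it lie in a prescribed open $\le_i$-interval $I_i$ whose endpoints are $\le_i$-consecutive elements of $\bar a$, or $\pm\infty$; each such $I_i$ is infinite since $\le_i$ is a dense linear order without endpoints. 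So the extension property is equivalent to the assertion that $\bigcap_{i=1}^n I_i\neq\emptyset$ for every such choice of intervals, and once that is known the standard back-and-forth finishes: the $M\to P_n$ direction is immediate from richness of $P_n$, the $P_n\to M$ direction is precisely this intersection statement, and the age computation is the same statement with $\bar a$ grown from $\emptyset$.

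The heart of the matter --- and the step I expect to be the main obstacle --- is the claim $\bigcap_{i=1}^n I_i\neq\emptyset$, since pairwise independence does not on its face give the required \emph{joint} genericity. I would argue by induction on $n$, strengthening the claim to ``$\bigcap_{i=1}^n I_i$ is infinite''. The case $n=1$ is density. For the inductive step, by the outer induction the reduct $(M;\le_1,\dots,\le_{n-1})$ is $P_{n-1}$, so $J:=\bigcap_{i<n}I_i$ is infinite; crucially, $J$ is a subset of $M$ definable over $\bar a$ using only $\le_1,\dots,\le_{n-1}$. Because $\le_n$ is independent of each of $\le_1,\dots,\le_{n-1}$, the analysis of $\emptyset$-definable closed subsets of products of minimal orders in \cite{rank1} (which bootstraps from pairwise independence), together with its localized form Lemma \ref{lemma:ind over par}, forces such a set to be $\le_n$-cofinal and $\le_n$-coinitial: an $\bar a$-definable $\le_n$-bound on $J$ would give, after naming $\bar a$, a cut of $(M,\le_n)$ controlled by the $(\le_1,\dots,\le_{n-1})$-structure, which such an intertwining is ruled out by independence. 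And, again by minimality of $\le_n$ applied to the $\bar a$-definable pieces of $J$, $J$ is dense in its $\le_n$-convex hull, which is therefore all of $M$; hence $J$ meets the open interval $I_n$ in an infinite set, closing the induction. In short, essentially all of the work is to port the product and independence-over-parameters machinery of \cite{rank1} to the present situation --- exactly what Proposition 3.23 there does --- and everything else is bookkeeping.
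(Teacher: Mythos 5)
The paper itself does not prove this proposition: it is imported verbatim as a special case of Proposition 3.23 of \cite{rank1}, so there is no internal proof to compare against and your attempt has to be judged on its own merits. Your outer structure is sound: strip orders equal or reverse to an earlier one (they are quantifier-free definable from it), read the hypothesis, as the trichotomy forces and as the paper's own application in Lemma \ref{lemma:genericquotient} confirms, as forbidding intertwining with an order or its reverse, so the survivors are pairwise independent minimal orders; then identify the reduct with the \fraisse limit of finite $n$-ordered sets via the one-point extension property, which reduces to showing $\bigcap_{i}I_i\neq\emptyset$ for prescribed open $\leq_i$-intervals with endpoints in $\bar a\cup\{\pm\infty\}$.

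The gap is exactly at that intersection claim. Your induction makes $J=\bigcap_{i<n}I_i$ an $\bar a$-definable set and then argues that a $\leq_n$-bound on $J$ "is ruled out by independence"; but independence and Proposition \ref{prop:product definable} concern $\emptyset$-definable maps and closed sets, and a parameter-definable cut of $(M,\leq_n)$ is not in itself contradictory --- every element of $\bar a$ already defines cuts of $\leq_n$. To get a contradiction you would have to convert the parametrized bound into a $\emptyset$-definable monotone map, which is precisely the nontrivial content you are deferring; Lemma \ref{lemma:ind over par} does not apply as stated either, since $J$ need not be transitive over $\bar a$ in $M$ and is not a subset of one of two independent orders in the sense of that lemma. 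Your closing remark that the work is "exactly what Proposition 3.23 there does" concedes that the crucial step assumes the result being proved. The repair is short and stays inside your framework: drop the induction and the parameters, and apply Proposition \ref{prop:product definable} to the closure of the diagonal $\{(x,\ldots,x): x\in M\}$ inside $(M,\leq_1)\times\cdots\times(M,\leq_n)$, a $\emptyset$-definable closed set. By transitivity the only $\emptyset$-definable subsets of each factor are $\emptyset$ and $M$, so that closure is the whole product, i.e.\ the diagonal is dense; testing density against the box $I_1\times\cdots\times I_n$, whose sides are nonempty because each $\leq_i$ is dense without endpoints (Lemma \ref{lemma:acl} plus transitivity), yields $\bigcap_i I_i\neq\emptyset$ with parameters handled for free. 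With that substitution the rest of your argument (the \fraisse criterion from \cite{Hodges} and the bookkeeping for equal or reversed orders) goes through.
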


To apply this proposition, we need to know that a primitive finite-dimensional permutation structure has topological rank 1.

\begin{definition} \label{def:binary}
We say an $\omega$-categorical structure is \textit{binary} if it eliminates quantifiers in a finite binary relational language.
\end{definition}

\begin{definition} \label{def:top primitive}
We say $(V; \leq, \cdots)$ is \textit{topologically primitive} if it does not admit any proper $\emptyset$-definable $\leq$-convex equivalence relation besides equality.
\end{definition}


\begin{lemma} [\cite{rank1}*{Lemma 7.3}] \label{lemma:top prim top rank1}

Let $(M; \leq, \cdots)$ be a binary structure which is topologically primitive. Then $(M;\leq, \cdots)$ has topological rank 1.
\end{lemma}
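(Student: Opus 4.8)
The plan is to prove the contrapositive: assuming $(M;\leq,\cdots)$ is binary but \emph{not} of topological rank $1$, I will produce a proper, non-equality $\emptyset$-definable $\leq$-convex equivalence relation, contradicting topological primitivity. So fix a $\leq$-convex equivalence relation $E$ with infinitely many infinite classes, definable over a finite tuple $\bar a$, chosen so that $|\bar a|$ is minimal among all such examples over all finite parameter sets. If $|\bar a|=0$ we are already done, since such an $E$ automatically has infinitely many classes (so $E\neq M^2$) and has infinite classes (so $E$ is not equality); the content of the argument is to rule out $|\bar a|\geq 1$ by using binarity to eliminate a parameter.

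First I would record the soft consequences of topological primitivity: the only $\emptyset$-definable $\leq$-convex equivalence relations are $=$ and $M^2$. Since intersections and joins (transitive closures of unions) of $\leq$-convex equivalence relations are again $\leq$-convex, forming $\bigcap_{\bar a'\models\tp(\bar a)}E_{\bar a'}$ and $\bigvee_{\bar a'\models\tp(\bar a)}E_{\bar a'}$ over the $\mathrm{Aut}(M)$-orbit of $E$ gives two $\emptyset$-definable $\leq$-convex equivalence relations; the first lies inside $E\neq M^2$, hence equals $=$, and the second contains $E\neq{=}$, hence equals $M^2$. Thus under the hypotheses the conjugates of $E$ have trivial common refinement yet jointly generate everything, and the task is to see that binarity makes this impossible. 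The binary hypothesis enters through the standard fact that in a binary $\omega$-categorical structure the type of a parameter-definable cut of $V$ over a finite set $A$ is determined by its type over $\emptyset$ together with its positions relative to the (finitely many) $a$-definable cuts for $a\in A$; in particular a cut is located over $\bar a$ by finitely many "which-interval" choices plus its $\emptyset$-type.

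The main line is then: the boundaries of the $E$-classes form an infinite $\bar a$-definable set of cuts in $\overline V$, so by pigeonhole infinitely many of them share a single $\emptyset$-type $q$ and a single position relative to $\bar a$, and hence all lie in one $\bar a$-definable convex block $W$ of $V$ that still meets infinitely many $E$-classes. Consider the $\emptyset$-definable $\leq$-convex equivalence relation $E^{*}$ relating two points iff no cut of $\emptyset$-type $q$ separates them (an equivalence relation since the realizations of $q$ are linearly arranged consistently with $V$); it is proper and non-equality exactly when the realizations of $q$ are not dense in $V$, and in that case we are done. The main obstacle is the remaining case in which the realizations of $q$ are dense: here I would refine $q$ to a complete type over $\bar a$ still carried by infinitely many $E$-class boundaries, pass inside the block $W$ (where positions over $\bar a$ are now constant), name the parameters, and iterate the whole construction, using minimality of $|\bar a|$ to terminate. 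Verifying that the relocation inside $W$ preserves "infinitely many infinite classes" and that the relations extracted en route never collapse to $=$ or $M^2$ is the delicate point, and it is precisely here that binarity — rather than some weaker finiteness hypothesis — is essential.
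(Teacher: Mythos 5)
There is a genuine gap, and it sits exactly where you flag ``the delicate point.'' Your construction of $E^*$ (no cut of $\emptyset$-orbit $q$ separates $x$ and $y$) disposes only of the easy case where the boundary cuts of some fixed orbit fail to be dense; the dense case is the actual content of the lemma, and your treatment of it is a placeholder. Two specific problems: first, the ``standard fact'' you invoke --- that in a binary $\omega$-categorical structure the type of a definable cut over a finite set $A$ is determined by its $\emptyset$-type together with its positions relative to the cuts definable over the individual elements of $A$ --- is not standard and is nowhere proved; cuts are imaginaries, and quantifier elimination in a binary relational language controls types of tuples of \emph{elements}, not of cuts, so this claim is essentially the crux rather than an available tool. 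Second, the termination scheme does not work as described: passing to the block $W$ requires naming its endpoints, which are cuts (not elements of $M$), and ``naming the parameters and iterating'' enlarges the parameter set, while minimality of $|\bar a|$ could only be contradicted by exhibiting a conjugate example over a strictly shorter tuple of elements --- which the sketch never produces (nor does it say anything about the base case $|\bar a|=1$). So the argument, as written, does not close.

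For comparison, the proof this paper relies on (Lemma 7.3 of \cite{rank1}; the same argument is written out, adapted to a quotient, in the proof of Lemma \ref{lemma:toprank1}) handles the dense/hard case by a different device: from a convex $F=F_{\bar a}$ with infinitely many infinite classes one defines the $\emptyset$-definable $\leq$-convex equivalence relation $R(x,y)$ holding iff for every $\bar b\equiv_\emptyset \bar a$ there are only finitely many $F_{\bar b}$-classes between $x$ and $y$. Topological primitivity forces $R$ to be equality, so every $F$-class splits into infinitely many classes of some conjugate $F_{\bar b}$; iterating yields a nested sequence $C_0\supset C_1\supset\cdots$ of classes of uniformly parameter-definable equivalence relations, each splitting into infinitely many classes of the next, and this is what contradicts binarity via Lemma \ref{lemma:finite rank}. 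That finite-rank lemma is where the binary hypothesis genuinely enters, and your proposal has no counterpart to it; if you want to salvage your approach, you would need either to prove your claim about cut types in binary structures and a genuine parameter-reduction step, or to import something like the relation $R$ and the chain argument, at which point you are back to the paper's proof.
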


The proof of Lemma \ref{lemma:top prim top rank1} uses the following result, which is a special case of \cite{rank1}*{Lemma 7.1}.

\begin{lemma}\label{lemma:finite rank}
	Let $M$ be a binary structure. Then we cannot find a sequence $(F_k)_{k<\omega}$ of uniformly parameter-definable equivalence relations and a sequence $M\supset C_0\supset C_1\supset \cdots $ such that each $C_k$ is an $F_k$-class which splits into infinitely many $F_{k+1}$-classes.
\end{lemma}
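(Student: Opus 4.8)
This is a special case of \cite{rank1}*{Lemma 7.1}, so the shortest route is simply to cite that result; nonetheless I sketch the idea underlying it. The plan is a proof by contradiction. Suppose we are given a formula $\phi(x,y;\bar z)$, parameters $\bar e_k$ with $F_k = \phi(M;\bar e_k)$ an equivalence relation for each $k$, and a chain $M\supseteq C_0\supsetneq C_1\supsetneq\cdots$ with $C_k$ an $F_k$-class which splits into infinitely many $F_{k+1}$-classes inside $C_k$. Since $M$ is binary we may take $\phi$ quantifier-free, and then membership in any parameter-definable set is ``local'': whether $M\models\phi(a,b;\bar c)$ depends only on the quantifier-free $2$-types of the pairs drawn from $\{a,b\}\cup\bar c$. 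Since $M$ is $\omega$-categorical, the Ryll-Nardzewski theorem gives only finitely many complete types of tuples of any fixed length.

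First I would build a ``caterpillar'': for each $k$ fix a representative $d_k\in C_k$ and, using the infinite splitting, points $a_k^1,a_k^2,\dots\in C_k\setminus C_{k+1}$ lying in pairwise distinct $F_{k+1}$-classes. A routine computation with the nesting of the $C_k$ records the incidences: for $k<\ell$ one has $a_k^j\,F_k\,a_\ell^{j'}$ but $\neg(a_k^j\,F_{k+1}\,a_\ell^{j'})$, while the legs at a fixed level $k$ are pairwise $F_k$-related and pairwise $F_{k+1}$-inequivalent. Next, applying the pigeonhole principle over the infinitely many levels $k$ (only finitely many types are available) together with Ramsey's theorem, I would extract an infinite subsequence of levels along which the relevant tuples --- each a chosen leg together with the bounded-length parameter tuples $\bar e_k,\bar e_{k+1}$ defining $F_k,F_{k+1}$ --- have constant pairwise $\emptyset$-$2$-types; by binarity this subsequence is $\emptyset$-indiscernible. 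Along this indiscernible sequence $(q_s)_s$ one checks that each level's relation isolates the tail $\{q_s,q_{s+1},\dots\}$ inside a single definable class which still splits into infinitely many classes at the next level, uniformly in $s$.

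The heart of the argument, and the step I expect to be the main obstacle, is extracting a contradiction from this indiscernible caterpillar: one must show that a binary $\omega$-categorical structure cannot support an infinitely nested, everywhere-infinitely-branching, uniformly definable system of equivalence relations. This is exactly what \cite{rank1}*{Lemma 7.1} supplies --- binarity forces the amalgamation of finite configurations to be governed by $2$-types, which bounds the ``rank'' of any such system. Note that the naive counting of types over finite parameter sets does \emph{not} close the gap: a definable equivalence relation with infinitely many classes typically has only a single generic class-type, so separating the infinitely many legs at a level requires putting roughly that many of them into the parameter set, and no contradiction with $\omega$-categoricity results. One genuinely needs the finer analysis of how the $C_k$ and $F_k$ sit inside a binary structure, so I would conclude by invoking \cite{rank1}*{Lemma 7.1}, of which the present statement is the restriction to a linearly ordered (rather than tree-like) family of classes.
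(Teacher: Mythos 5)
Your proposal takes essentially the same route as the paper: the paper offers no independent argument for this lemma, but simply records it as a special case of Lemma 7.1 of \cite{rank1}, which is exactly the citation you ultimately rely on (and your sketch is candid that the decisive step is supplied by that external lemma rather than by the pigeonhole/indiscernibility preliminaries). Nothing to correct.
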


From those results, one obtains the following theorem 
confirming the Primitivity Conjecture.

\begin{theorem} [\cite{rank1}*{Theorem 7.4}] \label{thm:primitivity conj}
 Let $(\Gamma; \leq_1, \ldots, \leq_n)$ be a primitive homogeneous finite-dimensional permutation structure such that no two orders are equal or opposite of each other. Then $\Gamma$ is the \fraisse limit of all finite sets equipped with $n$ orders.
\end{theorem}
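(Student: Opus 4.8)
The plan is to check that $(\Gamma;\leq_1,\dots,\leq_n)$ meets the hypotheses of Proposition~\ref{prop:determined reduct} with every pair of orders independent, so that $\Gamma$ — which is its own reduct to $L_0=\{\leq_1,\dots,\leq_n\}$ — is pinned down up to isomorphism by that pairwise data, and then to observe that the \fraisse limit of all finite $n$-ordered sets meets exactly the same hypotheses with exactly the same data, forcing the two structures to coincide. To get started: homogeneity in the finite binary relational language $L_0$ makes $\Gamma$ $\omega$-categorical and binary and gives quantifier elimination, so every $\emptyset$-definable relation on $\Gamma$ is a Boolean combination of the atomic formulas $x\leq_k y$ and $x=y$. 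Primitivity forces $\Gamma$ to be transitive: two distinct $1$-types would make ``$\tp(x)=\tp(y)$'' a proper nontrivial $\emptyset$-definable equivalence relation, and a $\emptyset$-definable point would give the partition into that point and its complement. For the same reason each $\leq_i$ is dense and without endpoints — a discrete part would yield a $\emptyset$-definable convex equivalence relation, an endpoint a $\emptyset$-definable element — so each $(\Gamma;\leq_i)$ is weakly transitive in the sense of Definition~\ref{def:minimal}.

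Since $\Gamma$ admits no proper nontrivial $\emptyset$-definable equivalence relation, a fortiori each $(\Gamma;\leq_i)$ is topologically primitive, and as $\Gamma$ is binary, Lemma~\ref{lemma:top prim top rank1} shows each $\leq_i$ has topological rank $1$; hence each $(\Gamma;\leq_i)$ is minimal. Next I would rule out intertwinedness. Suppose $\leq_i$ were intertwined with $\leq_j$ or with its reverse, $i\neq j$, witnessed by a $\emptyset$-definable non-decreasing map $f$ from $(\Gamma,\leq_i)$ into $\overline{(\Gamma,\leq_j)}$. The fibres of $f$ form a $\emptyset$-definable $\leq_i$-convex equivalence relation, so by topological primitivity $f$ is constant or injective; it cannot be constant, since its value would then be a $\emptyset$-definable cut of $(\Gamma,\leq_j)$, impossible in a transitive structure (which has no $\emptyset$-definable proper nonempty subset). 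Thus $f$ is a $\emptyset$-definable strictly increasing injection of the minimal order $(\Gamma,\leq_i)$ into the cuts of the minimal order $(\Gamma,\leq_j)$. Invoking the structure theory of minimal orders from \cite{rank1}*{\S 3} — the analysis behind Lemma~3.19 there, which identifies independence of minimal orders with the absence of such a map in either direction — this situation forces $\leq_i$ and $\leq_j$ to be equal or opposite, against the hypothesis. So no two of the $\leq_i$ are intertwined, and, again by hypothesis, every pair is independent.

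Now Proposition~\ref{prop:determined reduct} applies to $(\Gamma;\leq_1,\dots,\leq_n)$ and says the reduct of $\Gamma$ to $L_0$ is determined up to isomorphism by the pairwise relation data, which here records only that all pairs are independent; since $\Gamma$ is itself an $L_0$-structure, this reduct is $\Gamma$. The \fraisse limit $\Gamma_0$ of all finite sets with $n$ linear orders is $\omega$-categorical, transitive, binary, and (as is standard) primitive, its orders are dense without endpoints and hence — by the reasoning above — minimal of topological rank $1$, and no two of them are intertwined (equality or reversal is excluded by genericity, and the intertwinedness argument above handles the rest); so $\Gamma_0$ too is its own reduct to $L_0$ and is determined by the identical pairwise data. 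Therefore $\Gamma\cong\Gamma_0$, which is the claim.

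The main obstacle is the non-intertwinedness step. Once primitivity and transitivity have reduced it to the statement ``a $\emptyset$-definable strictly increasing injection of one minimal order into the cuts of another can only arise from equality or reversal of the orders'', one still needs the delicate theory of minimal orders and of $\emptyset$-definable non-decreasing maps into their sets of cuts developed in \cite{rank1}; everything else is bookkeeping around Lemma~\ref{lemma:top prim top rank1} and Proposition~\ref{prop:determined reduct}, together with the elementary consequences of primitivity and quantifier elimination noted in the first paragraph.
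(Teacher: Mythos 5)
Your overall skeleton matches the derivation the paper has in mind: this theorem is quoted from \cite{rank1}, and the paper presents it as a consequence of the results it has just recorded, namely Lemma~\ref{lemma:top prim top rank1} (topological primitivity plus binarity gives topological rank $1$) and Proposition~\ref{prop:determined reduct}. Your bookkeeping is fine: primitivity gives transitivity, density, weak transitivity and topological primitivity; Lemma~\ref{lemma:top prim top rank1} gives minimality of each $(\Gamma;\leq_i,\cdots)$; and your reduction of a putative intertwining map $f$ to a $\emptyset$-definable strictly increasing injection of $(\Gamma,\leq_i)$ into the cuts of $(\Gamma,\leq_j)$ (fibres are convex, so trivial by topological primitivity; constancy is impossible by transitivity) is correct.

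The genuine gap is the next sentence. The claim that such an injection ``forces $\leq_i$ and $\leq_j$ to be equal or opposite'' is precisely the substantive content of the non-intertwinedness step, and the result you invoke does not deliver it: Lemma~3.19 of \cite{rank1} (as recorded here around Definition~\ref{def:intertwined independent} and Lemma~\ref{lemma:ind over par}) only identifies the two formulations of independence for minimal orders; it says nothing about intertwined minimal orders being equal or reversed, and indeed in general two distinct orders can be intertwined without being equal or opposite, so some use of primitivity beyond what you have made is unavoidable. Your own closing paragraph concedes that this ``delicate'' step is missing, which is exactly the problem. The missing argument is available in this paper: apply Lemma~\ref{lemma:nocut} with $E$ equal to equality (legitimate, since $\Gamma$ is binary, transitive and order-like, and by primitivity equality is the coarsest proper $\leq_j$-convex $\emptyset$-definable equivalence relation). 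It says that the only $a$-definable cut of $(\Gamma,\leq_j)$ is the cut $\{x : x <_j a\}$ determined by $a$ itself. Hence $f(a)$ must be that cut for every $a$, and monotonicity of $f$ then gives $a <_i b \Rightarrow a <_j b$, i.e. $\leq_i=\leq_j$ (and $\leq_i$ equal to the reverse of $\leq_j$ in the reversed case), contradicting the hypothesis. With that inserted, the application of Proposition~\ref{prop:determined reduct} to both $\Gamma$ and the \fraisse limit of all finite $n$-ordered sets (whose pairwise independence can be checked directly from genericity, rather than via the same intertwining argument) completes the proof as you outline.
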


Finally, we close with several results that will also be used later. The first two propositions describe the closed sets $\emptyset$-definable in a minimal linear order and then in a product of pairwise independent linear orders. 

\begin{proposition} [\cite{rank1}*{Proposition 3.11}] \label{prop:dense definable}
Let $(V; \leq, \cdots)$ be a minimal definable linear order. Let $p(x_0, \ldots , x_{n-1})$ be a type in $V^n$ such that $p \proves x_0 < x_1 < \ldots < x_{n-1}$. Then given open $\leq$-intervals $I_0 < \cdots < I_{n-1}$ of $V$ , we can find $a_i \in I_i$ such that $(a_0 , \ldots , a_{n-1}) \satisfies p$.
\end{proposition}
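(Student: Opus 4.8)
\emph{Proof proposal.}
The plan is to argue by induction on $n$, peeling off the largest variable. The base case $n=1$ is exactly weak transitivity (Definition~\ref{def:minimal}): the realizations of the $1$-type $p(x_0)$ are dense in $V$, hence meet the nonempty open interval $I_0$.

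For the inductive step, assume the claim for $n-1$ variables and let $p^-$ denote the restriction of $p$ to $(x_0,\dots,x_{n-2})$; this is a type in $V^{n-1}$ with $p^-\proves x_0<\dots<x_{n-2}$. Applying the inductive hypothesis to $p^-$ and $I_0<\dots<I_{n-2}$ yields $a_i\in I_i$ $(i\le n-2)$ with $(a_0,\dots,a_{n-2})\satisfies p^-$. Because $(a_0,\dots,a_{n-2})\satisfies p^-$, there is a unique complete type $q(x)$ over $\bar a=(a_0,\dots,a_{n-2})$ whose realizations $c$ satisfy $(a_0,\dots,a_{n-2},c)\satisfies p$, and it suffices to realize $q$ inside $I_{n-1}$. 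I would prove two facts about $q$: (i) $q$ is non-algebraic and its realization set $Y$ is dense in its convex hull $C$; and (ii) $C\supseteq I_{n-1}$. Granting these, $Y\cap I_{n-1}\ne\emptyset$, and any such point is a valid $a_{n-1}$.

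Fact (ii) is the easier of the two. Since $p\proves x_{n-2}<x_{n-1}$ we have $Y\subseteq(a_{n-2},+\infty)$ and hence $C\subseteq(a_{n-2},+\infty)$; and $I_{n-1}>I_{n-2}\ni a_{n-2}$ gives $I_{n-1}\subseteq(a_{n-2},+\infty)$, so it is enough to see $C=(a_{n-2},+\infty)$. This reduces to controlling algebraic closure: a minimal order has no algebraic points over $\emptyset$ (immediate from weak transitivity), and over the finite set $\bar a$ the only cuts that can bound a definable convex set are those located at $a_0,\dots,a_{n-2}$ and at $\pm\infty$; since all $a_i\le a_{n-2}$ while $Y$ lies strictly above $a_{n-2}$, the infimum of $Y$ must be the cut at $a_{n-2}$ and its supremum $+\infty$, which in passing also shows $q$ is non-algebraic. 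For fact (i), suppose $Y$ is not dense in $C$ and define $E$ on $V$, over $\bar a$, by $x\mathrel{E}y$ iff the closed interval with endpoints $x,y$ meets no realization of $q$; here $Y$ is $\bar a$-definable because it is the realization set of a complete type over a finite set in an $\omega$-categorical structure. Then $E$ is a $\le$-convex equivalence relation whose non-singleton classes are precisely the maximal $Y$-free subintervals (``gaps'') of $C$, each infinite because $V$ is dense. If there are infinitely many such gaps this contradicts topological rank $1$; the remaining case of finitely many gaps I would handle by noting that the gaps and their endpoints become $\bar a$-definable, and exploiting the freedom left in the inductive hypothesis to re-choose $\bar a$ (together with a genericity argument in the spirit of Lemma~\ref{lemma:finite rank}), so that the open interval $I_{n-1}\subseteq C$ is not swallowed by the finite union of gaps and $Y$ meets $I_{n-1}$ after all.

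The main obstacle is fact (i): upgrading weak transitivity, which concerns only $1$-types over $\emptyset$, to a (near-)density statement for the $1$-type $q$ over finitely many parameters. Topological rank $1$ is exactly what rules out the pathological case of infinitely many gaps; disposing cleanly of the finitely-many-gaps case, and pinning down algebraic closure over finite sets in a minimal order sharply enough to secure fact (ii), are the two places where the argument needs genuine care rather than bookkeeping.
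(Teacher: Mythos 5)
This proposition is imported from \cite{rank1}*{Proposition 3.11}; the present paper gives no proof of it, so your sketch can only be judged on its own terms, and as it stands it has genuine gaps exactly at the two points you yourself flag as carrying the weight. The inductive frame and the base case (weak transitivity) are fine, but facts (i) and (ii) are not bookkeeping. Fact (ii) rests on the assertion that the only $\bar{a}$-definable cuts that can bound a definable convex subset of $V$ are those at the coordinates of $\bar{a}$ and at $\pm\infty$. Nothing quoted in this paper yields that: Lemma \ref{lemma:acl} controls algebraic \emph{points}, not cuts; and Lemma \ref{lemma:nocut} is stated for a single parameter in a binary, order-like structure, and moreover its proof invokes Proposition \ref{prop:dense definable} itself, so appealing to it (or to anything downstream of it) would be circular. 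In the generality of the proposition --- an arbitrary $\omega$-categorical ambient structure, no binarity, a tuple of parameters --- this cut statement is essentially the content of the machinery developed in Section 3 of \cite{rank1}; it is comparable in depth to the proposition you are proving, not a fact you may assume after saying it needs ``care''.

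Fact (i) has the same dependence plus an internal slip: topological rank 1 forbids a parameter-definable $\leq$-convex equivalence relation with infinitely many \emph{infinite} classes, so ``infinitely many gaps'' is not by itself a contradiction; you would need infinitely many gaps each containing an open interval, which your relation $E$ does not provide (some gaps could be small while only one is fat). The finitely-many-gaps case is then dispatched by ``re-choosing $\bar{a}$'' together with ``a genericity argument in the spirit of Lemma \ref{lemma:finite rank}''; that lemma concerns binary structures and is not available here, and no actual argument is indicated --- note also that if the cut claim of (ii) were available, the finitely-many-gaps case would be immediate (the gap endpoints would be $\bar{a}$-definable cuts with nowhere to sit), so your proof of (i) really collapses back onto the unproved (ii). In short: the plan --- induct on $n$, reduce to density of the fibre $1$-type over $\bar{a}$, exclude gaps via topological rank 1 --- is reasonable and in the spirit of the source, but the two steps that constitute the actual mathematical content are missing and cannot be filled from the results available in this paper without circularity.
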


\begin{lemma} [\cite{rank1}*{Lemma 3.1}] \label{lemma:acl}
Let $(V, \leq, \dots)$ be infinite and transitive. Then $\leq$ is dense, and for any $a \in V$, $\acl(a) = \set{a}$.
\end{lemma}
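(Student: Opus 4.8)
The plan is to derive everything from two standard facts about the $\omega$-categorical ambient structure: that $\acl$ of a finite set is finite, and that any two elements of $V$ with the same $\emptyset$-type are conjugate under $\Aut$. First I would observe that $\acl(a)=\dcl(a)$ for every $a\in V$. Indeed, $\acl(a)$ is finite, it is linearly ordered by the $\emptyset$-definable order $\leq$, and every automorphism fixing $a$ both permutes $\acl(a)$ and preserves $\leq$; an order-automorphism of a finite linear order is the identity, so each such automorphism fixes $\acl(a)$ pointwise. Hence it suffices to show $\dcl(a)=\set{a}$.

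Next, suppose toward a contradiction that $b\in\dcl(a)$ with $b\neq a$, say $a<b$ (the case $a>b$ is symmetric). A standard argument converts the formula isolating $b$ over $a$ into a single $\emptyset$-definable total function $f\colon V\to V$ with $f(a)=b$ (intersect with $V$, and conjoin the uniqueness clause $\exists^{=1}z\,\phi(x,z)$, which by transitivity holds of every $x\in V$ since it holds of $a$). The key step is now to promote the local inequality $f(a)>a$ to the global statement $f(x)>x$ for all $x\in V$: since $V$ is transitive every $x\in V$ realizes $\tp(a/\emptyset)$, and ``$f(x)>x$'' is a $\emptyset$-formula satisfied by $a$. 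Iterating $f$ then yields an infinite strictly increasing chain $a<f(a)<f^2(a)<\cdots$, every term of which lies in $\dcl(a)\subseteq\acl(a)$, contradicting finiteness of $\acl(a)$. Therefore $\dcl(a)=\acl(a)=\set{a}$.

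Finally, density of $\leq$ falls out by the same mechanism. If $\leq$ were not dense there would be $a<b$ in $V$ with nothing strictly between them; by transitivity every element of $V$ would then have an immediate $\leq$-successor, giving a $\emptyset$-definable total function $s\colon V\to V$ with $s(a)=b\neq a$, contradicting $\dcl(a)=\set{a}$ (or, equivalently, run the infinite-chain argument directly on the iterates $s^n(a)$).

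The step I expect to require the most care — and the one where the hypotheses genuinely bite — is the passage $\acl(a)=\dcl(a)$ combined with the transfer of $f(a)>a$ to all of $V$. Without a $\emptyset$-definable linear order an $\omega$-categorical structure can perfectly well have $\dcl(a)\supsetneq\set{a}$ (just never an infinite $\dcl$), so it is precisely the order that collapses $\acl$ onto $\dcl$ and then forbids the definable self-map $f$ from having finite order. Everything else is routine bookkeeping with $\omega$-categoricity.
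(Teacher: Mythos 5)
Your proof is correct, and it is essentially the standard argument: collapse $\acl(a)\cap V$ to $\dcl(a)$ using the finite linear order, rule out a nontrivial $\emptyset$-definable self-map $f$ with $f(x)>x$ by iterating against finiteness of algebraic closure in an $\omega$-categorical structure, and deduce density by viewing an immediate successor as a definable element; the paper itself gives no proof, importing the lemma from \cite{rank1} (Lemma 3.1), where the proof runs along the same lines. The only small reading to make explicit is that, since $V$ sits inside an ambient $\omega$-categorical structure, the statement should be understood as $\acl(a)\cap V=\set{a}$ (as in Proposition \ref{prop:trivial acl}), which is exactly what your argument establishes.
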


\begin{proposition} [\cite{rank1}*{Proposition 3.21}] \label{prop:product definable}
Choose pairwise independent minimal orders. $(V_0; \leq_0, \cdots), \dots, (V_{n-1}; \leq_{n-1}, \cdots)$. Then any $\emptyset$-definable closed set $X \subseteq V_0^{k_0} \times \cdots \times V_{n-1}^{k_{n-1}}$ is a finite union of products of the form $D_0 \times \cdots \times D_{n-1}$, where each $D_i$ is a $\emptyset$-definable closed subset of $V_i^{k_i}$.
\end{proposition}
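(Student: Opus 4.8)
The plan is to prove the stronger statement that \emph{types factor over the sorts}: for any tuples $\bar a_i \in V_i^{k_i}$ ($0 \le i \le n-1$), the type $\tp(\bar a_0 \cdots \bar a_{n-1}/\emptyset)$ is determined by the individual types $\tp(\bar a_i/\emptyset)$. Granting this, the proposition follows by a short topological argument. Since there are only finitely many $\emptyset$-types in each $V_i^{k_i}$ (by $\omega$-categoricity), every $\emptyset$-definable $X \subseteq V_0^{k_0} \times \cdots \times V_{n-1}^{k_{n-1}}$ is a union of realization sets of $\emptyset$-types, and by the factoring claim each such realization set has the form $p_0(V_0^{k_0}) \times \cdots \times p_{n-1}(V_{n-1}^{k_{n-1}})$; grouping terms, $X = \bigcup_{j} C^0_j \times \cdots \times C^{n-1}_j$ with each $C^i_j$ a $\emptyset$-definable (not necessarily closed) subset of $V_i^{k_i}$. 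If moreover $X$ is closed, then, using that topological closure commutes with finite unions, with finite products, and with the automorphism group (hence preserves $\emptyset$-definability), we get $X = \overline X = \bigcup_j \overline{C^0_j} \times \cdots \times \overline{C^{n-1}_j}$, which is exactly the asserted decomposition into products of $\emptyset$-definable closed sets.

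It remains to prove the factoring claim, which I would do by induction on $n$. For $n = 1$ there is nothing to prove. For the inductive step it suffices to show that $\tp(\bar a_{n-1}/\bar a_0 \cdots \bar a_{n-2})$ is determined by $\tp(\bar a_{n-1}/\emptyset)$, since then, combined with the inductive hypothesis applied to $\bar a_0 \cdots \bar a_{n-2}$, the full type is pinned down. Equivalently, writing $\bar b = \bar a_0 \cdots \bar a_{n-2}$, one must show that $\bar b$ does not constrain the type of a tuple from $V_{n-1}$ beyond what is already $\emptyset$-definable: no $\bar b$-definable subset of $V_{n-1}^{k_{n-1}}$ properly cuts an $\emptyset$-type realization set, and $\acl(\bar b) \cap V_{n-1} = \emptyset$. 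The triviality of algebraic closure is Lemma~\ref{lemma:acl}. For the ``no new cuts'' part one uses that $V_{n-1}$ is minimal and independent from each $V_i$ with $i \le n-2$: by Lemma~\ref{lemma:ind over par}, after cutting $\bar b$ down to $\bar b$-definable transitive pieces on each coordinate (which remain independent from $V_{n-1}$ over $\emptyset$ by triviality of $\acl$), $\bar b$ defines no nondecreasing map into $\overline{V_{n-1}}$, and, by topological rank $1$ together with triviality of $\acl$, no nontrivial $\leq_{n-1}$-convex equivalence relation; Proposition~\ref{prop:dense definable} then lets one realize any candidate extension of $\tp(\bar a_{n-1}/\emptyset)$ inside prescribed $\leq_{n-1}$-intervals, which forces $\tp(\bar a_{n-1}/\bar b)$ to be the unique $\bar b$-definable completion, namely the one depending only on $\tp(\bar a_{n-1}/\emptyset)$.

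The main obstacle is exactly this last point: upgrading the \emph{pairwise} independence of the $V_i$ to the statement that the multi-sorted tuple $\bar b$, which by the inductive hypothesis ranges over an arbitrary ``rectangular'' configuration spanning $V_0, \ldots, V_{n-2}$, exerts no joint influence on the type of a tuple from $V_{n-1}$. The subtlety is that a definable cut of $V_{n-1}$ could in principle use parameters from several of the $V_i$ at once even though none of them alone defines such a cut; ruling this out is where the strengthened independence-over-parameters statement, Lemma~\ref{lemma:ind over par}, applied coordinatewise after passing to transitive $\emptyset$-definable pieces and invoking Lemma~\ref{lemma:acl} to keep everything over $\emptyset$, does the real work. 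Once this is secured, the remaining bookkeeping with $\omega$-categoricity and with topological closures is routine.
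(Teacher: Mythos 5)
First, note that the paper does not prove this proposition at all: it is imported verbatim from \cite{rank1}*{Proposition 3.21}, so the only comparison available is with the actual argument there, which uses closedness in an essential way. Your proposal, by contrast, reduces everything to the claim that \emph{types factor over the sorts} (equivalently, that every $\emptyset$-definable subset of $V_0^{k_0}\times\cdots\times V_{n-1}^{k_{n-1}}$, closed or not, is a finite union of products of $\emptyset$-definable sets), and then recovers the closed statement by taking closures. That intermediate claim is false under the stated hypotheses, and this is a fatal gap, not a repairable detail. Concretely, let $(M;\leq_1,\leq_2)$ be the generic $2$-dimensional permutation structure and take $V_0=V_1=M$ with the two orders: these are minimal (weakly transitive, and of topological rank $1$ by Lemma~\ref{lemma:top prim top rank1}) and independent, yet the diagonal $\{(x,y):x=y\}\subseteq V_0\times V_1$ is $\emptyset$-definable and is not a finite union of products of $\emptyset$-definable sets (its fibers are singletons and no point is $\emptyset$-definable), so $\tp(a,b)$ is certainly not determined by $\tp(a)$ and $\tp(b)$. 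The same phenomenon occurs in exactly the situations where this paper applies the proposition: in Lemma~\ref{lemma:refineconvex} the set $C^*$ of tuples of projections of elements of a class is a ``diagonal-like'' definable set which is \emph{not} a union of products; only its closure is, and that is precisely why the proposition is stated for closed sets and why the conclusion drawn there is density rather than a product decomposition. Your final closure-taking step (closure commutes with finite unions, finite products, and automorphisms) is correct but moot, since the decomposition it is applied to does not exist.

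The same counterexample shows that the goal of your second paragraph, namely that $\tp(\bar a_{n-1}/\bar b)$ is determined by $\tp(\bar a_{n-1}/\emptyset)$, cannot be reached by any argument: with $\bar b=a\in V_0$ and $\bar a_{n-1}=b\in V_1$ as above, $\tp(b/a)$ has many completions of the unique $1$-type. Independence of the orders only forbids $\emptyset$-definable \emph{monotone} maps into cuts; it does not forbid parameters from one sort constraining points of another (indeed in the intended applications the $V_i$ are quotients of one structure and such constraints abound). The correct proof must use closedness from the outset: roughly, one shows that if $(\bar a_i)_i$ lies in the closed set $X$ and $\bar b_i\equiv\bar a_i$ for each $i$, then every basic open box around $(\bar b_i)_i$ meets $X$ (this is where pairwise independence and density results in the spirit of Proposition~\ref{prop:dense definable} and Lemma~\ref{lemma:ind over par} do the work), whence $(\bar b_i)_i\in\overline X=X$; the product decomposition then follows. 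So the closedness hypothesis is the engine of the proof, not a cosmetic final step.
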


\begin{proposition} [\cite{rank1}*{Proposition 6.1}] \label{prop:trivial acl}
Assume that $M$ is NIP and binary. Let $X,Y \subset M$ be $\emptyset$-definable, and let $p(x, y)$ be the complete type of a thorn-independent pair from $X \times Y$. Let $(V; \leq, \cdots)$ be a $\emptyset$-definable minimal linear order. and let $f\colon p(X \times Y) \to V$ be a $\emptyset$-definable function. Then for any $(a, b ) \satisfies p$, $f(a, b) \in \dcl(a) \cup \dcl(b)$.

In particular, for any $A \subset V$, $\dcl(A) \cap V = A$.
\end{proposition}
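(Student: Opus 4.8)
The plan is to argue by contradiction and to use the minimality of $V$ to force a configuration of impossible combinatorial complexity. Suppose $(a,b)\satisfies p$ but $c:=f(a,b)\notin\dcl(a)\cup\dcl(b)$. First I would record a reduction: since $M$ is $\omega$-categorical, $\acl$ of a finite set is finite, and a finite $A$-definable subset of the linearly ordered set $V$ is pointwise $A$-definable (its $k$-th element is the unique element with $k-1$ predecessors in the set); hence $\dcl(A)\cap V=\acl(A)\cap V$ for every finite $A$. Thus $c\notin\acl(a)\cup\acl(b)$, while $c\in\dcl(ab)\subseteq\acl(ab)$ and $a,b$ are thorn-independent over $\emptyset$. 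So $c$ is algebraic over $ab$, over neither $a$ nor $b$, from a thorn-independent base, and by conjugating over $a$ (resp.\ over $b$) one produces $b'\equiv_a b$ with $f(a,b')\neq c$ (resp.\ $a'\equiv_b a$ with $f(a',b)\neq c$): the value genuinely moves in each coordinate. This is the \emph{non-degenerate} behaviour to be excluded.

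To exploit it I would pass to long thorn-independent sequences $(a_i)_{i<N}$ realising $\tp(a/b)$ and $(b_j)_{j<N}$ realising $\tp(b/a)$, chosen so that every $(a_i,b_j)\satisfies p$ and the whole array is as free as thorn-independence permits, and consider $c_{ij}:=f(a_i,b_j)\in V$. The \emph{key claim} is that the relative $\leq$-behaviour of the family $(c_{ij})_{i,j}$ is essentially unconstrained: any assignment compatible with the ``single-variable'' constraints coming from $\tp(a_i,b_j,b_{j'})$ and $\tp(a_i,a_{i'},b_j)$ is realised by some such array. This rests on (i) the movement of $c$ noted above, and (ii) the minimality of $V$: being weakly transitive and of topological rank $1$, Proposition~\ref{prop:dense definable} lets one drop realisations of the relevant types into prescribed open $\leq$-intervals, while the fact that $V$ is intertwined with neither itself nor its reverse means (in the spirit of Proposition~\ref{prop:product definable}, applied to the parameter-definable closed subset of $V^{N}$ cut out by $a\mapsto(f(a,b_1),\dots,f(a,b_N))$) that the ``independent'' coordinates cannot be tied together, so that closure is a finite union of products of infinite definable subsets of $V$. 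Granting the claim, fix $b$ and thorn-independent $b_1,\dots,b_n$ on the $Y$-side and note that the binary formula $\varphi(x;y,z)\equiv f(x,y)<f(x,z)$ shatters $\set{b_j}_j$, since one can choose $a$ realising every pattern of $f(a,b_j)$ versus $f(a,b)$; this contradicts that $M$ is NIP, and the proposition follows.

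For the ``in particular'' it suffices, by finite character, to prove $\dcl(A)\cap V=A$ for finite $A\subseteq V$, by induction on $|A|$. Given $c\in\dcl(A)\cap V$, take $A_0\subseteq A$ minimal with $c\in\dcl(A_0)$. If $|A_0|=0$ then $c\in\dcl(\emptyset)\cap V\subseteq\dcl(a')\cap V=\set{a'}$ for every $a'\in V$ by Lemma~\ref{lemma:acl} (applied on the transitive pieces of $V$), which is absurd since $V$ is a dense order; if $A_0=\set{a'}$ then $c\in\dcl(a')=\set{a'}$, so $c=a'\in A$. If $|A_0|\geq 2$, pick $a'\in A_0$ and put $A_1:=A_0\setminus\set{a'}$; minimality of $A_0$ gives $c\notin\dcl(A_1)$, and $c\neq a'$ gives $c\notin\dcl(a')=\set{a'}$. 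A tuple of distinct elements of $V$ is thorn-independent over $\emptyset$ (thorn-closure on $V$ coincides with the trivial $\acl$ of Lemma~\ref{lemma:acl}), so $(A_1,a')$ realises a type as in the statement, and the main claim forces $c\in\dcl(A_1)\cup\dcl(a')$, a contradiction. Hence $\dcl(A)\cap V=A$.

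The main obstacle is the key claim of the second paragraph: upgrading ``$f(a,-)$ and $f(-,b)$ are individually non-constant'' to ``the joint order type of $f$ over a free array is unconstrained''. The danger is that $f$ secretly correlates its values across rows and columns; ruling this out is precisely where one needs the full strength of the minimality of $V$ (no parameter-definable $\leq$-convex equivalence relation with infinitely many infinite classes, so that definable subsets of, and definable functions into, $V$ are tame — governed by points and cuts) together with non-self-intertwining, and where Propositions~\ref{prop:dense definable} and~\ref{prop:product definable} do their work. The thorn-independence bookkeeping — choosing the array free, and observing that distinct elements of $V$ form a thorn-independent tuple — is routine given the triviality of $\acl$ on $V$, but must be stated carefully.
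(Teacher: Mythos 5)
First, a point of bookkeeping: this paper does not prove Proposition \ref{prop:trivial acl} at all; it is imported verbatim from \cite{rank1}*{Proposition 6.1}, so there is no internal proof to measure your argument against. Judged on its own, your proposal has a genuine gap, and you have located it yourself: the ``key claim'' that the joint $\leq$-pattern of the array $c_{ij}=f(a_i,b_j)$ is essentially unconstrained is never proved, and it is not a technical loose end --- it is the entire content of the proposition. The tools you invoke do not deliver it. Proposition \ref{prop:product definable} applies only to \emph{pairwise independent} minimal orders, whereas your target set lives in $V^{N}$, i.e.\ the same order repeated; and your supporting assertion that ``$V$ is intertwined with neither itself nor its reverse'' is false: $a\mapsto\{x\in V: x<a\}$ is a $\emptyset$-definable non-decreasing map from $V$ to $\overline{V}$, so every weakly transitive order is intertwined with itself, and the independence machinery (Proposition \ref{prop:product definable}, Lemma \ref{lemma:ind over par}) simply does not apply to $V\times\cdots\times V$. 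Proposition \ref{prop:dense definable} tells you which tuples of $V$ realize a \emph{given} type inside prescribed intervals; it says nothing about which order types of $(f(a,b_1),\dots,f(a,b_N))$ occur as $a$ varies, which is exactly the ``secret correlation'' you would have to exclude. So the NIP-shattering endgame is fine in shape (shattering arbitrarily large finite sets by $f(x,y)<f(x,z)$ would indeed contradict NIP), but the premise feeding it is unsupported, and note also that binarity of $M$ --- a hypothesis of the proposition --- plays no role anywhere in your sketch, which is a warning sign.

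The derivation of the ``in particular'' clause also leans on an unjustified step: you assert that any tuple of distinct elements of $V$ is thorn-independent, but nothing available in this paper (Remark \ref{rem:trivial acl} lists the only three facts about thorn-independence you may use, and Lemma \ref{lemma:acl} only gives triviality of $\acl$ inside $V$) yields that, and it is not obviously true. Moreover the proposition as stated takes $X,Y\subset M$ (sets of elements, not of tuples), so applying it to the pair $(A_1,a')$ with $|A_1|\geq 2$ already goes beyond the quoted statement. The opening reduction $\dcl(A)\cap V=\acl(A)\cap V$ via definability of the $k$-th element of a finite set is correct, but it does not compensate for the two gaps above.
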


\begin{remark}\label{rem:trivial acl}
A definition of thorn-independence in our setting may be found in \cite{rank1}*{\S 2.3}. However, we will only need the following three facts:
\begin{enumerate}
	\item We can always find a thorn-independent pair in a product of $\emptyset$-definable sets.
	\item If $(a,b)$ is a thorn-independent pair and $E$ is a $\emptyset$-definable equivalence relation with infinitely many classes, then $a$ and $b$ are in different $E$-classes.
	\item If $(a,b)$ is a thorn-independent pair, $a'\in \dcl(a)$ and $b'\in \dcl(b)$, then $(a',b')$ is a thorn-independent pair.
\end{enumerate}

\end{remark}

For applications of Proposition \ref{prop:trivial acl}, note that a homogeneous finite-dimensional permutation structure is NIP, as it has quantifier elimination, NIP is preserved by boolean combinations, and ``$x \leq y$'' is NIP.

\section{The lattice of $\emptyset$-definable equivalence relations}

In this section, we investigate the $\emptyset$-definable equivalence relations of a homogeneous finite-dimensional permutation structure.
The main result for the first subsection, Lemma \ref{lemma:intersectionconvex}, is that each meet-irreducible element of the lattice is convex with respect to some linear order in the language, and that of the next subsection, Proposition \ref{prop:homreduct}, is that the reduct to the language of $\emptyset$-definable equivalence relations is generic.

Some lemmas will be proven in a more general setting, so we introduce the following definition.

\begin{definition}
A structure $M$ is \textit{order-like} if for any complete type $p(x,y)$ in two variables over $\emptyset$, we have $p(x,y)\wedge p(y,z)\to p(x,z)$.
\end{definition}

Note that a homogeneous finite-dimensional permutation structure is order-like. Conversely, we do not know if every transitive, order-like, binary NIP structure is (bi-definable with) a finite-dimensional permutation structure.

\begin{notation}
For this section, $\leq$ will denote a linear order, as will $\leq_i$.
\end{notation}

\subsection{Convexity}
We first establish an analogue of Lemma \ref{lemma:top prim top rank1} when working in the quotient of a binary structure.

\begin{lemma} \label{lemma:toprank1}
Let $(M; \leq, \cdots)$ be $\omega$-categorical and binary. Let $E$ be the coarsest proper $\emptyset$-definable $\leq$-convex equivalence relation. Then $(M/E; \leq, \cdots)$ has topological rank 1.
\end{lemma}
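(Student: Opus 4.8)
The plan is to reduce to Lemma~\ref{lemma:top prim top rank1} by passing to the quotient $M/E$ and checking that it satisfies the hypotheses of that lemma, namely that $(M/E;\leq,\cdots)$ is again binary and topologically primitive. First I would observe that since $E$ is $\emptyset$-definable, $M/E$ carries an induced $\omega$-categorical structure, and the induced order $\leq$ on $M/E$ is well-defined precisely because $E$ is $\leq$-convex; moreover $M/E$ remains binary because $M$ is binary and all the structure on $M/E$ is induced from $2$-types of $M$ (a type of a pair from $M/E$ lifts to a type of a pair from $M$, and QE in a binary language is inherited). So the only real content is topological primitivity of $M/E$.

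The heart of the argument is: suppose toward a contradiction that $(M/E;\leq,\cdots)$ admits a proper $\emptyset$-definable $\leq$-convex equivalence relation $\bar F$ other than equality. Pull $\bar F$ back along the quotient map to a $\emptyset$-definable equivalence relation $F$ on $M$; then $F \supseteq E$, $F$ is $\leq$-convex on $M$ (convexity of $\bar F$ on $M/E$ together with $\leq$-convexity of $E$ gives convexity of the preimage), and $F$ is proper because $\bar F \neq$ the trivial relation. But $E$ was chosen to be the \emph{coarsest} proper $\emptyset$-definable $\leq$-convex equivalence relation, so $F \subseteq E$, hence $F = E$, hence $\bar F$ is equality on $M/E$ — contradiction. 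Therefore $(M/E;\leq,\cdots)$ is topologically primitive, and Lemma~\ref{lemma:top prim top rank1} applies to give that it has topological rank~$1$.

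The step I expect to be the main obstacle — or at least the one needing the most care — is verifying that $M/E$ is genuinely binary, i.e. that passing to a definable quotient does not introduce structure on triples that is not a boolean combination of structure on pairs. This should follow from the fact that in a binary $\omega$-categorical structure every type is determined by its $2$-subtypes, and this property is transported to any interpretable quotient by a $\emptyset$-definable equivalence relation; still, one must be slightly careful that the quotient is taken in a way that keeps the language finite relational (using one relation per pair of $E$-classes realizing a given $2$-type in $M$), which is exactly the presentation convention adopted earlier for $\Lambda$-ultrametric spaces. A secondary small point is checking that ``coarsest proper $\leq$-convex $\emptyset$-definable equivalence relation'' is well-defined: the join (as equivalence relations) of two $\leq$-convex $\emptyset$-definable equivalence relations is again $\leq$-convex and $\emptyset$-definable, and by $\omega$-categoricity there are only finitely many $\emptyset$-definable equivalence relations, so a unique coarsest proper one exists provided the trivial relation is not the only proper option — but if $M$ itself is already topologically primitive then $M = M/E$ and the statement is immediate from Lemma~\ref{lemma:top prim top rank1}, so that degenerate case is harmless.
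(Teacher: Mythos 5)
Your reduction stands or falls on the claim that $M/E$ is again binary, and that is a genuine gap: binarity (Definition \ref{def:binary}) is \emph{not} inherited by quotients by $\emptyset$-definable equivalence relations. In a binary structure the type of a tuple is determined by its $2$-subtypes, but after quotienting one would have to choose representatives of the classes coherently for all pairs simultaneously, and this can fail. Concretely, let $N$ be the generic structure consisting of an equivalence relation $E$ with classes of size $2$ together with a graph whose edges between any two $E$-classes form a perfect matching (the antipodal double cover of the random graph); this is a \fraisse limit in the finite binary language $\{E,R\}$, hence homogeneous, binary and $\omega$-categorical, and $E$ is $\emptyset$-definable. Its quotient $N/E$ has a unique type of pairs of distinct elements, but two orbits of triples (a triple of classes carries pairwise matchings whose union is either two disjoint triangles or a $6$-cycle, i.e.\ the quotient is the generic two-graph), so $3$-types are not determined by $2$-types and the quotient is not binary. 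Your only justification is the general principle ``binarity passes to definable quotients'', which this example refutes; you give no argument that the $\leq$-convexity of $E$ repairs this, and no such fact is available in the paper (which is explicitly cautious even about homogeneity of quotients in the intended application). So Lemma \ref{lemma:top prim top rank1} cannot simply be applied to $M/E$.

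Your verification that $M/E$ is topologically primitive is fine (it only uses that $E$ is maximal among proper $\leq$-convex $\emptyset$-definable equivalence relations, which is all ``coarsest'' is needed for), but the paper's proof is organized precisely to avoid the step you skipped: rather than quoting Lemma \ref{lemma:top prim top rank1} downstairs, it adapts its proof. Given a parameter-definable $\leq$-convex equivalence relation $F=F_{\bar a}$ on $M/E$ witnessing failure of topological rank $1$, one forms the $\emptyset$-definable convex relation $R$ (``finitely many $F_{\bar b}$-classes in between, for every $\bar b\equiv\bar a$''), uses maximality of $E$ to force $R$ to be equality, iterates to build a nested sequence $C_0\supset C_1\supset\cdots$ with each $C_k$ an $F_k$-class splitting into infinitely many $F_{k+1}$-classes, and then lifts this configuration to $M$, where Lemma \ref{lemma:finite rank} applies because $M$ itself is binary; binarity is only ever used upstairs. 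To salvage your route you would have to prove that $M/E$ is binary under the hypotheses of the lemma, which is exactly what the paper's argument is designed to do without. (A minor side point: your existence argument for a unique coarsest relation via joins is also shaky, since the join of two proper convex equivalence relations need not be proper; but this is harmless, as the lemma simply assumes $E$ is given.)
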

\begin{proof}
The proof is an adaptation of that of \cite{rank1}*{Lemma 7.3}. Assume that there is a proper parameter-definable $\leq$-convex equivalence relation $F$ on $(M/E;\leq, \cdots)$. Let $F$ be defined over $\bar a$ and write $F=F_{\bar a}$. Let $R(x,y)$ be the relation on $M/E$ which holds of a pair $(c,d)$ if for every $\bar b$ having the same type as $\bar a$ over $\emptyset$, there are finitely many $F_{\bar b}$-equivalence classes between $c$ and $d$. Then $R$ is $\emptyset$-definable and is a $\leq$-convex equivalence relation. By the maximality assumption on $E$, $R$ is equality. Then for any $F_{\bar a}$-class $C$, there is $\bar b\equiv \bar a$ such that $C$ splits into infinitely many $F_{\bar b}$-classes. Let $F_0=F$, $C_0=C$, $F_1=F_{\bar b}$, and let $C_1$ be any $F_1$-class inside $C_0$. We can iterate the construction to obtain a sequence $(F_k)_{k<\omega}$ of equivalence relations on $M/E$ and a decreasing sequence $(C_k)_{k<\omega}$ such that each $C_k$ is an $F_k$-class that splits into infinitely many $F_{k+1}$-classes. This entire situation lifts to $M$ and contradicts Lemma \ref{lemma:finite rank}.
\end{proof}

\begin{lemma} \label{lemma:nocut}
	Let $(M;\leq,\cdots)$ be $\omega$-categorical order-like, transitive, and binary. Let $E$ be the coarsest proper convex $\emptyset$-definable equivalence relation. Then given $a\in M$, for any $a$-definable cut $c$ of $(M,\leq)$, we have $\inf(a/E)\leq c \leq \sup(a/E)$.
\end{lemma}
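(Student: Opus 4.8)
My plan is to argue by contradiction and, after analysing how $c$ meets the $E$-classes, to reduce to the case in which $M$ is topologically primitive, where I would mimic the proof of Lemma~\ref{lemma:toprank1}. First I would note that it suffices to prove $\inf(a/E)\le c$: applying this to the reverse order $\ge$, for which $E$ is still the coarsest proper convex $\emptyset$-definable equivalence relation, yields $c\le\sup(a/E)$. So suppose $c<\inf(a/E)$ and write $c=f(a)$ with $f$ a $\emptyset$-definable map from $M$ to the set of cuts of $(M,\le)$; by transitivity $f(a')<\inf(a'/E)$ for every $a'$. Since $M$ is transitive and binary, the type of an ordered pair determines the order relation between the corresponding cuts $f(a'),f(a'')$, and $f$ cannot be constant since a transitive structure admits no $\emptyset$-definable cut; so after a short monotonicity analysis, reversing $\le$ if necessary, I may assume $f$ is strictly increasing.

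Next I would pass to $M/E$. It is transitive, and it is \emph{topologically primitive}, because a proper convex $\emptyset$-definable equivalence relation on $M/E$ would pull back to one on $M$ strictly coarser than $E$; hence by Lemma~\ref{lemma:toprank1} it has topological rank $1$, so it is minimal, and by Lemma~\ref{lemma:acl} its algebraic closure is trivial. Because $E$ is $\le$-convex, $c$ straddles at most one $E$-class, and I would split accordingly. If $c$ straddles no $E$-class it descends to an $(a/E)$-definable cut of $M/E$ lying strictly below $a/E$. If $c$ straddles an $E$-class $D$, then $D\neq a/E$, and the $E$-invariant set $\{x\in M:\text{the }E\text{-class of }x\text{ is straddled by }f(a')\text{ for some }a'\mathrel Ea\}$ is invariant under every automorphism of $M$ fixing the set $a/E$, so its image in $M/E$ is $(a/E)$-definable and lies strictly below $a/E$; if that image is finite it contradicts triviality of $\acl$ in $M/E$, and if it is infinite it has no least element (again by Lemma~\ref{lemma:acl}), so its infimum is an $(a/E)$-definable cut of $M/E$ strictly below $a/E$. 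In every case this yields, inside the transitive, order-like, binary, $\omega$-categorical, \emph{topologically primitive} structure $M/E$, a point that is a parameter for a cut not adjacent to it. So it is enough to prove the lemma when $M$ itself is topologically primitive, where the conclusion amounts to saying that no point of $M$ is a parameter for any cut other than the two adjacent to it.

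For that final case I would suppose $a$ defines a cut $c$ with an element strictly between $c$ and $a$ (the other possibility being symmetric) and write $c=f(a)$ with $f$ strictly increasing. The relation ``no cut of the form $f(b)$ lies strictly between $x$ and $y$'' is a $\emptyset$-definable $\le$-convex equivalence relation (transitivity and convexity hold since the elements below a fixed cut form an initial segment), and it is not the trivial relation because the cut $f(b_0)$ is proper and nonempty for any $b_0$; by topological primitivity it must be equality, so $\{f(b):b\in M\}$ is $\le$-dense in $M$. From here I would follow the template of the proof of Lemma~\ref{lemma:toprank1}: using a parameter-localized version of this relation, and the fact that — by density of $\{f(b)\}$ — the presence of a definable cut strictly below a point must recur at ever smaller scales, I would build a strictly decreasing sequence $M\supset C_0\supset C_1\supset\cdots$ and uniformly parameter-definable equivalence relations $F_k$ with each $C_k$ an $F_k$-class that splits into infinitely many $F_{k+1}$-classes, contradicting Lemma~\ref{lemma:finite rank}.

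The hard part will be exactly this last step — extracting an honestly descending tower of infinitely-branching convex equivalence classes from the single ``far'' cut together with the density of $\{f(b)\}$ — whereas the reductions above, and the checks that the displayed relations are $\le$-convex equivalence relations, are routine.
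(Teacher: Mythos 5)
There is a genuine gap, and it sits exactly where you flag it: the final, topologically primitive case. Your plan there is to contradict Lemma~\ref{lemma:finite rank} by building a descending tower of infinitely-splitting parameter-definable classes out of the cut function $f$ and the density of $\{f(b)\}$, but no mechanism for this is given, and there is a structural reason to doubt one exists: in a binary, topologically primitive structure, Lemma~\ref{lemma:top prim top rank1} already gives topological rank~$1$, i.e.\ there are \emph{no} parameter-definable convex equivalence relations with infinitely many infinite classes, so the tower of convex relations you would imitate from the proof of Lemma~\ref{lemma:toprank1} cannot exist, and your candidate relation (``no cut $f(b)$ strictly between $x$ and $y$'') is equality, whose classes are singletons and hence cannot split infinitely. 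More tellingly, your primitive-case argument never uses the order-like hypothesis, whereas the actual content of the lemma lives there: the paper's proof takes the \emph{minimal} $a$-definable cut $c(a)$ above $a/E$, passes to $c_*=\sup\{c(a'):a'Ea\}$ to get a function on $M/E$, uses Lemma~\ref{lemma:toprank1} together with Proposition~\ref{prop:dense definable} to make its graph dense in $\{x<y\}$, and then finds $a\ll b\ll c(a)\ll d\ll c(b)$ with $\tp(b,d)=\tp(a,b)$; order-likeness forces $\tp(a,d)=\tp(a,b)$, contradicting that the $a$-definable cut $c(a)$ separates $b$ from $d$. Density alone does not ``recur at ever smaller scales'' into a Lemma~\ref{lemma:finite rank} configuration; the contradiction has to come from order-likeness, which your outline drops at the decisive moment.

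There are also problems in the reduction itself. You treat $M/E$ as ``transitive, order-like, binary, $\omega$-categorical, topologically primitive,'' but neither binarity nor order-likeness of the quotient is justified: 2-types of pairs of $E$-classes are not 2-types of pairs of elements, and the paper is structured precisely to avoid needing these transfers (in Lemma~\ref{lemma:toprank1} the tower is built on $M/E$ but the contradiction with Lemma~\ref{lemma:finite rank} is obtained only after lifting the whole configuration back to $M$, and in Lemma~\ref{lemma:nocut} the quotient is only used through Lemma~\ref{lemma:toprank1} and Proposition~\ref{prop:dense definable}). The claim that after ``a short monotonicity analysis'' you may take $f$ strictly increasing is also unjustified, since different 2-types extending $x<y$ may order $f(x),f(y)$ differently; the paper sidesteps this by working with the minimal cut $c(a)$ and the supremum $c_*$ over the $E$-class rather than with a monotone $f$. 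These reduction issues are repairable in spirit, but the primitive-case endgame needs to be replaced by an argument that actually exploits order-likeness, essentially as in the paper.
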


\begin{proof}
	We write $a\ll b$ for $a/E < b/E$, equivalently $a<b$ and $a/E \neq b/E$. If $c$ is a cut, then $a\ll c$ means that $c$ is greater than the supremum of the $E$-class of $a$.
	
	Assume that there is a cut $c$ definable from $a$, with $a\ll c$. Let $c(a)$ be the minimal such cut. Consider the cut $c_*:=\sup\{c(a'):a'Ea\}$. Note that $c_*$ depends only on the $E$-class of $a$, so we can write $c_*=f_*(a/E)$ for some function $f_*$. Assume $c_*$ is not $+\infty$ and let $g_*(a/E)$ be the image of $f_*(a/E)$ in the quotient $M/E$. Then $g_*$ is a function from $(M/E,\leq)$ to its Dedekind completion with $x<g_*(x)$. By Lemma \ref{lemma:toprank1}, $M/E$ has topological rank 1. Therefore by Proposition \ref{prop:dense definable}, the graph of $g_*$ must be dense in $\{(x,y):x<y\}$. We can then find $b\in M$ such that $a\ll b\ll c(a) \ll c(b)$. Then as $c(b)$ is the minimal cut definable from $b$ above $b/E$, there is $d\in M$ such that $\tp(a,b)=\tp(b,d)$ and $a\ll b\ll c(a) \ll d \ll c(b)$. So $\tp(a,d)\neq \tp(a,b)$ and this is a contradiction to $M$ being order-like.
	
	If $c_*$ is $+\infty$, then we can also find $b$ as above, just by definition of $c_*$.
\end{proof}

\begin{corollary} \label{cor:dense class}
Let $(M;\leq,\cdots)$ be $\omega$-categorical order-like, transitive, and binary. Let $E$ be the coarsest proper $\leq$-convex $\emptyset$-definable equivalence relation. Let $F$ be a $\emptyset$-definable equivalence relation not refining $E$. Then no $F$-class defines a cut in $(M;\leq,\cdots)$.

In particular:
\begin{enumerate}
\item Every $F$-class intersects a dense set of $E$-classes.
\item Suppose $F$ is the coarsest proper $\leq'$-convex $\emptyset$-definable equivalence relation, for some $\emptyset$-definable order $\leq'$. Then $(M/E;\leq,\cdots)$ and $(M/F;\leq',\cdots)$ are independent.
\end{enumerate}
\end{corollary}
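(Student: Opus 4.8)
The plan is to derive the main assertion from Lemma~\ref{lemma:nocut} together with the density of $M/E$, and then to read off the two numbered consequences. If $E$ is equality the main assertion is immediate: a cut $c\in\dcl(a)$ for every $a$ in a $\ge 2$-element $F$-class $D$ would, by Lemma~\ref{lemma:nocut}, be the cut just below $a$, forcing $a=\sup c\in\dcl(a')=\{a'\}$ for all $a'\in D$, a contradiction. So assume $E\neq{}$equality; then every $E$-class has at least two elements (by transitivity), and $M/E$, being infinite and transitive, is a dense linear order by Lemma~\ref{lemma:acl}. Now suppose an $F$-class $D$ defines a cut $c$ of $(M,\le)$. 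Since $F$ does not refine $E$, transitivity forces every $F$-class to meet at least two $E$-classes, so $|D|\ge 2$. For each $a\in D$ we have $D=a/F\in\dcl(a)$, hence $c\in\dcl(a)$, and Lemma~\ref{lemma:nocut} gives $\inf(a/E)\le c\le\sup(a/E)$; thus $c$ lies in the closed hull of \emph{every} $E$-class met by $D$. These classes are pairwise disjoint convex subsets of $M$, so for any two of them $X_1<X_2$ we get $c\le\sup X_1\le\inf X_2\le c$, i.e.\ $\sup X_1=c=\inf X_2$. If $D$ met three classes, the middle one $Y$ would satisfy $\inf Y=c=\sup Y$, forcing $|Y|\le 1$, contrary to $E\neq{}$equality. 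So $D$ meets exactly two classes $X^-<X^+$ with $\sup X^-=c=\inf X^+$; any $E$-class strictly between them would again have $\inf=\sup$, so there is none, and $X^+$ is an immediate successor of $X^-$ in $M/E$, contradicting density.

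\emph{Consequence (1).} First upgrade the main assertion: no $F$-class $D$ can define a proper nonempty initial segment $J$ of $M/E$. Indeed its preimage $\tilde J=\bigcup J$ is a proper nonempty $D$-definable initial segment of $M$; if $\tilde J$ had a maximum $m$, then $m\in\dcl(D)\subseteq\dcl(a)=\{a\}$ for every $a\in D$ (Lemma~\ref{lemma:acl}), impossible as $|D|\ge 2$, so $\tilde J$ is a cut of $M$, contradicting the main assertion; and symmetrically for final segments. Now fix $D$ and let $S\subseteq M/E$ be the $D$-definable set of $E$-classes it meets. Were $S$ bounded above, its downward closure would be such a $J$; so $S$ is cofinal, and dually coinitial. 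Were $S$ not dense it would have a gap; since $S$ is a definable subset of the minimal order $M/E$, its boundary is a finite set invariant under the automorphisms fixing $D$ setwise, so one of the gap's endpoints is $D$-definable and yields such a $J$. Hence $S$ is dense in $M/E$.

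\emph{Consequence (2).} By Lemma~\ref{lemma:toprank1}, $M/E$ (for $\le$) and $M/F$ (for $\le'$, since $F$ is the coarsest proper $\le'$-convex relation) both have topological rank $1$; being transitive and dense they are weakly transitive, hence minimal, so independence is meaningful. If $M/E$ were intertwined with $M/F$, there would be a $\emptyset$-definable non-decreasing, hence non-constant, map $f\colon M/E\to\overline{M/F}$; choose $E$-classes with $f(X_1)<f(X_2)$ and an $F$-class $D$ strictly between these two cuts. Then $I_D=\{x\in M:f(x/E)<D\}$ is a $D$-definable initial segment of $M$, nonempty (it contains $X_1$) and proper (it misses $X_2$), whose maximum, if any, would lie in $\dcl(a)=\{a\}$ for all $a\in D$; so it has none, and $I_D$ is a $D$-definable cut of $M$, contradicting the main assertion. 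The symmetric argument with $\le'$ reversed excludes intertwining with the reverse of $M/F$, so $M/E$ and $M/F$ are independent.

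I expect the crux to be the main assertion, and within it the step that simultaneously confines $c$ to the closed hull of \emph{every} $E$-class met by $D$ (via Lemma~\ref{lemma:nocut}) and concludes that only two such classes can occur and must be order-adjacent; once that is in place, density of $M/E$ closes the argument, and consequences (1) and (2) are routine ``extract a $D$-definable cut'' arguments, the only delicate point in (1) being the definability (from $D$, not merely from an element of $D$) of the cut coming from a gap in $S$.
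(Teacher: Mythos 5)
Your treatment of the main assertion is correct and is in essence the paper's argument: the cut defined by an $F$-class $D$ is definable from each $a\in D$, Lemma \ref{lemma:nocut} confines it to the convex hull of $a/E$, and two distinct $E$-classes met by $D$ (together with density of $M/E$) give a contradiction; you merely handle the boundary cases (adjacent classes, $E={}$equality) more explicitly. Part (2) is also fine and is an expansion of the paper's one-line reduction of an intertwining map to an $F$-class-definable cut; the two small unproved points there (that $M/E$ is infinite, and that a constant $\emptyset$-definable intertwining map is impossible) both follow at once from transitivity.

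Part (1), however, has a genuine gap at the step ``since $S$ is a definable subset of the minimal order $M/E$, its boundary is a finite set.'' Minimality (weak transitivity plus topological rank $1$) implies nothing like o-minimality: in the generic structure with two independent linear orders, a set of the form $\{x : x <_2 a\}$ is dense and codense in $(M,\leq_1)$, so a parameter-definable subset of a minimal order can have infinite boundary. Under your contradiction hypothesis ($S$ cofinal, coinitial, not dense) $S$ could still be dense-codense on part of $M/E$ while omitting an interval elsewhere, so finiteness of the boundary is simply not available, and with possibly infinitely many gaps you cannot single out a $D$-definable one. The fact you actually need is finiteness of the number of \emph{infinite} convex components of the complement of $S$, and this is exactly what the paper gets from Lemma \ref{lemma:toprank1}: the $D$-definable $\leq$-convex equivalence relation ``$x=y$ or the interval between $x$ and $y$ misses $S$'' has finitely many infinite classes by topological rank $1$; a witness to non-density is contained in such an infinite class, so there are finitely many ($\geq 1$) infinite gaps, each is fixed setwise by every automorphism stabilizing $D$, and the downward closure of the leftmost one is a $D$-definable proper nonempty initial segment of $M/E$, contradicting your (correct) upgraded claim. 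With that substitution your proof of (1) closes; as written, the finite-boundary step fails.
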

\begin{proof}
Let $C$ be an $F$-class, and suppose that a cut in $(M;\leq,\cdots)$ is definable from $C$. Then that cut is definable from any $a \in C$. Let $a_1, a_2 \in C$ lie in distinct $E$-classes. By Lemma \ref{lemma:nocut}, the only cut of $(M/E, \leq)$ definable from $a_i$ is that of $a_i/E$. As these cuts are distinct, $C$ can define neither.

For $(1)$, let $C$ be an $F$-class, and let $\sim$ be the $\leq$-convex equivalence relation on $M/E$ defined by: \[x \sim y \iff \text{the interval }[x, y]\text{ lies in the complement of C}.\] As $M/E$ has topological rank 1 by Lemma \ref{lemma:toprank1}, $\sim$ has finitely many classes. There must be multiple $\sim$-classes, but then their endpoints would be $C$-definable cuts.

For $(2)$, note that an intertwining map would require each $F$-class to define a cut in $(M/E;\leq,\cdots)$, which we ruled out above.  
\end{proof}

\begin{definition}
Let $(\Gamma, \leq_1, \ldots, \leq_n)$ be homogeneous, and $E$ a $\emptyset$-definable equivalence relation. We say $E$ is \textit{convex} if it is $\leq_i$-convex for some $i$.
\end{definition}


\begin{lemma} \label{lemma:refineconvex}
Let $(\Gamma, \leq_1, \ldots, \leq_n)$ be homogeneous. Then any maximal $\emptyset$-definable equivalence relation $F$ is convex with respect to at least two linear orders in the language.
\end{lemma}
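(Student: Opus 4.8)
The plan is to pass to the primitive quotient $\Gamma/F$, identify exactly which orders descend to it, and then count them, using the results of the previous subsection together with the Primitivity Conjecture (Theorem~\ref{thm:primitivity conj}). We may assume $F\neq\bbone$, since otherwise $F$ is convex with respect to every order. As $F$ is a maximal $\emptyset$-definable equivalence relation, $\Gamma/F$ is primitive, and it remains $\omega$-categorical, binary, transitive, and order-like (order-likeness is inherited by $\emptyset$-definable quotients). Put $S=\{i\le n: F \text{ is }\leq_i\text{-convex}\}$; for $i\in S$ the order $\leq_i$ descends to a linear order $\bar\leq_i$ on $\Gamma/F$, and what we must prove is that $|S|\ge 2$.

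\emph{The orders outside $S$ carry no information on the quotient.} Fix $j\notin S$ and let $E_j$ be the coarsest proper $\leq_j$-convex $\emptyset$-definable equivalence relation. If $F$ refined $E_j$, then maximality of $F$ together with properness of $E_j$ would force $F=E_j$, contradicting that $E_j$ is $\leq_j$-convex while $F$ is not; so $F$ does not refine $E_j$. Corollary~\ref{cor:dense class} now applies, and part $(1)$ gives that every $F$-class meets a $\leq_j$-dense set of $E_j$-classes. Since $\Gamma/E_j$ is an infinite dense linear order without endpoints (by Lemma~\ref{lemma:acl} and transitivity), each $F$-class is therefore both $\leq_j$-cofinal and $\leq_j$-coinitial in $\Gamma$; hence any two distinct $F$-classes are $\leq_j$-interleaved, and $\leq_j$ induces on $\Gamma/F$ only the complete relation. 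Running this for every $j\notin S$, we conclude that $\Gamma/F$ is interdefinable with $(\Gamma/F;(\bar\leq_i)_{i\in S})$, which is thus a primitive homogeneous $|S|$-dimensional permutation structure.

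\emph{Counting $S$.} First $S\neq\emptyset$: otherwise $\Gamma/F$ would be interdefinable with the structure in the empty language, i.e.\ an infinite pure set, which is not order-like (the unique nonequality $2$-type $p$ satisfies $p(x,y)\wedge p(y,z)$ at $x=z\neq y$, but not $p(x,z)$). Now suppose toward a contradiction that $|S|=1$, say $S=\{1\}$; then $(\Gamma/F;\bar\leq_1)$ is an infinite homogeneous linear order, hence isomorphic to $(\mathbb{Q},\le)$ (this is the content of the Primitivity Conjecture in dimension $1$). Thus $\leq_1$ is a lexicographic-type order over the $F$-classes, while $\leq_2,\dots,\leq_n$ each interleave distinct $F$-classes. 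The contradiction must come from the internal structure of $\Gamma$: restricting to a single $F$-class $C$ equipped with the induced orders $\leq_i|_C$ yields a homogeneous permutation structure on which $F$ has become the trivial relation and whose lattice of $\emptyset$-definable equivalence relations is shorter than that of $\Gamma$, so one runs an induction — on the number of orders, equivalently on the height of that lattice — whose inductive step combines the reduction above with an analysis of how $\leq_2,\dots,\leq_n$ sit over $\mathbb{Q}=\Gamma/F$; alternatively one produces directly a triple of points of $\Gamma$ that, given the $\leq_1$-convexity of $F$ and the minimality of $\Gamma/F$, violates order-likeness. Either way $|S|=1$ is impossible, so $|S|\ge 2$.

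The main obstacle is precisely this last step — excluding the possibility that a maximal equivalence relation is convex with respect to exactly one linear order. The exclusion of $|S|=0$ is forced by order-likeness alone, but $|S|=1$ is not, since $(\mathbb{Q},\le)$ is itself order-like; so one genuinely has to control how the remaining orders interact with the $F$-classes, which is most naturally organized as an inductive descent through the lattice of $\emptyset$-definable equivalence relations (or, equivalently, on the dimension). The earlier steps — primitivity of $\Gamma/F$, and the redundancy of the non-descending orders via Corollary~\ref{cor:dense class} — are routine once Corollary~\ref{cor:dense class} is in hand.
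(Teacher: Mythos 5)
Your proof has a genuine gap at exactly the point you flag as the main obstacle: the exclusion of $|S|=1$ is never actually carried out. You describe two possible strategies (an induction on the height of the lattice, or producing a triple of points violating order-likeness) and then conclude ``either way $|S|=1$ is impossible,'' but neither argument is executed, and this case is essentially the whole content of the lemma. There are also unjustified steps in the reduction itself. First, the claim that $(\Gamma/F;(\bar\leq_i)_{i\in S})$ is a primitive \emph{homogeneous} $|S|$-dimensional permutation structure is not established: homogeneity of quotients is a delicate point in this paper (it is obtained only later, in Corollary~\ref{corollary:homquotient}, under extra hypotheses), and it cannot simply be asserted. Second, knowing that each $\leq_j$ with $j\notin S$ induces the complete relation on pairs of $F$-classes does not show these orders ``carry no information'' on the quotient: the induced structure on $\Gamma/F$ consists of \emph{all} relations $\emptyset$-definable in $\Gamma$ (e.g.\ relations such as $\exists x\in\bar x\,\exists y\in\bar y\,(x\leq_2 y\wedge x\leq_3 y)$), not just the projections of the individual orders, so interdefinability with the reduct to $(\bar\leq_i)_{i\in S}$ — and, in the $S=\emptyset$ case, with a pure set — does not follow. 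The assertion that order-likeness passes to $\emptyset$-definable quotients is likewise stated without proof.

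For comparison, the paper never quotients by $F$ at all. It passes to the quotients $V_i=\Gamma/E_i$ by the maximal $\leq_i$-convex relations, notes via Corollary~\ref{cor:dense class}(2) and Lemma~\ref{lemma:toprank1} that the resulting orders $W_1,\ldots,W_k$ are pairwise independent and minimal, and then uses Proposition~\ref{prop:product definable} to show that each $F$-class (after naming it by a predicate) projects to a dense subset of the product $\prod_i W_i$ — omitting $W_j$ in the case where $F$ is $\leq_j$-convex for exactly one $j$. Quantifier elimination then gives the contradiction: in the first case the complete type $\bigwedge_i x<_i y$ would be consistent with both $F(x,y)$ and $\neg F(x,y)$; in the second, the formulas $x<_j y\wedge\bigwedge_{i\neq j}x<_i y$ and $y<_j x\wedge\bigwedge_{i\neq j}x<_i y$ would have to imply $\neg F(x,y)$ yet be realized by a pair inside a single $F$-class. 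This density-in-products argument is precisely the mechanism that settles the case you left open, so if you want to salvage your outline you should replace the unproved inductive step by an argument of this kind carried out inside $\Gamma$ itself.
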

\begin{proof}
 For each $i \leq n$, let $E_i$ denote the maximal $\leq_i$-convex $\emptyset$-definable equivalence relation, let $V_i = (\Gamma/E_i; \leq_i, \cdots)$ be the structure induced on the quotient, and let $W_1,\ldots, W_k$ be representatives of the $V_i$'s up to $\emptyset$-definable monotonic bijection. Then by Corollary \ref{cor:dense class}(2) and Lemma \ref{lemma:toprank1}, the $W_i$'s are pairwise independent topological rank 1 ordered sets. 
 
 First, suppose $F$ is not convex. Then by Corollary \ref{cor:dense class}(1), each $F$-class projects densely on each $W_i$. For any $F$-class $C$, we may expand the language by a unary predicate naming $C$. Each resulting $W_i$ is still minimal, as $C$ can define no cuts by Corollary \ref{cor:dense class}. Let $C^* \subset \prod W_i$, where each element of $C^*$ is the tuple of projections onto each $W_i$ of an element of $C$. By Proposition \ref{prop:product definable},  $C^*$ is dense in $\prod W_i$ equipped with the product topology, i.e. if a non-empty open $\leq_i$-interval is chosen for each $W_i$, then there is some $c \in C^*$ lying in all the chosen intervals.  
 By quantifier elimination $\bigwedge_i x<_i y$ implies a complete type on $(x,y)$. However, by the density of $C^*$ for any $F$-class $C$, this type is consistent both with $F(x,y)$ and $\neg F(x,y)$.
 
 Now suppose $F$ is only $\leq_j$-convex. Then we carry out the same argument, omitting $W_j$. Again, each $F$-class is dense in the product $\prod_{i \neq j} W_i$.  Let $C_1 <_j C_2$ be two $F$-classes. By density, we can find $a,a'\in C_1$ and $b,b'\in C_2$ such that $\bigwedge_{i\neq j} a <_i b$ and $\bigwedge_{i\neq j} b'<_i a'$. It follows, both $x<_j y \wedge  \bigwedge_{i \neq j} x<_i y$ and $y<_j x \wedge \bigwedge_{i\neq j} x<_i y$ are consistent with $\neg F(x,y)$. By quantifier elimination, those formulas must imply $\neg F(x,y)$. However, by density of $C_1$, we can find $(c,d) \in C_1^{2}$ satisfying one of those formulas. This is a contradiction.
\end{proof}



\begin{lemma} \label{lemma:intersectionconvex}
Let $(\Gamma; \leq_1, \ldots, \leq_n)$ be homogeneous, with lattice of $\emptyset$-definable equivalence relations $\Lambda$. Then any meet-irreducible $E\in \Lambda$ is convex with respect to at least two linear orders in the language.
\end{lemma}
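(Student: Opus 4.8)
The plan is to adapt the proof of Lemma~\ref{lemma:refineconvex}, which is precisely the case where $E$ is maximal. Set $L_0=\{\leq_1,\dots,\leq_n\}$. Since $\Gamma$ eliminates quantifiers in $L_0$, the $L_0$-type over $\emptyset$ of a pair $c\neq d$ is determined by its \emph{sign pattern}, recording for each $i$ whether $c<_i d$ or $d<_i c$; hence for any $\emptyset$-definable equivalence relation $H$ and any realizable pattern $\sigma$, either $\sigma\vdash H(x,y)$ or $\sigma\vdash\neg H(x,y)$. So it suffices to exhibit, for one realizable $\sigma$, a pair realizing $\sigma$ inside a single $E$-class and another pair realizing $\sigma$ in two distinct $E$-classes. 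Suppose toward a contradiction that $E$ is $\leq_i$-convex for at most one index $i$; being a meet-irreducible of $\Lambda\setminus\{\bbzero,\bbone\}$, $E$ is proper and nontrivial, so in the generic structure it has infinitely many infinite classes.

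Next I would rebuild the geometry of Lemma~\ref{lemma:refineconvex}: for each $i$ let $E_i$ be the coarsest $\leq_i$-convex $\emptyset$-definable equivalence relation, $V_i=(\Gamma/E_i;\leq_i,\cdots)$, and $W_1,\dots,W_k$ representatives of the $V_i$ up to $\emptyset$-definable monotone bijection. By Lemma~\ref{lemma:toprank1} together with weak transitivity of the quotients, the $W_i$ are minimal; by Corollary~\ref{cor:dense class}(2) they are pairwise independent, and by Lemma~\ref{lemma:ind over par} independence persists after naming a single $E$-class. Fix an $E$-class $C$ and name it by a unary predicate. Since $E$ is not $\leq_i$-convex we have $E\not\le E_i$, so by Corollary~\ref{cor:dense class} $C$ defines no cut in any $V_i$ (so each $W_i$ stays minimal) and by Corollary~\ref{cor:dense class}(1) the projection of $C$ to each $W_i$ is dense. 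Let $C^*\subseteq W_1\times\cdots\times W_k$ be the set of tuples of projections of points of $C$; its closure is $\emptyset$-definable in the expansion, hence by Proposition~\ref{prop:product definable} a finite union of products $\prod_i D_i^{(m)}$ with $D_i^{(m)}$ a $\emptyset$-definable closed subset of the minimal order $W_i$. Weak transitivity forces each $D_i^{(m)}$ to be $\emptyset$ or $W_i$, and since $C^*\neq\emptyset$ one product is everything; thus every $E$-class is dense in $\prod_i W_i$, and likewise in $\prod_{i\ne j}W_i$ after discarding any one coordinate.

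Now the two cases. Case~(a), $E$ is $\leq_i$-convex for no $i$: picking $c,d$ in one $E$-class with projections to $\prod_i W_i$ satisfying $c<_i d$ coordinatewise realizes a pattern $\sigma$ with $E(c,d)$ (after reversing coordinates, $\sigma$ is ``all orders increasing''); picking $c,d$ in two distinct $E$-classes with projections again in that open region realizes $\sigma$ with $\neg E(c,d)$, contradicting quantifier elimination. Case~(b), $E$ is $\leq_j$-convex for exactly one $j$: every $E$-class is dense in $\prod_{i\ne j}W_i$, and the $E$-classes are linearly ordered by $\leq_j$; fix $E$-classes $C_1<_j C_2$. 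By density of $(C_1\times C_1)^*$ and pairwise independence of the $W_i$ over the parameter $C_1$, there is a pair $c,d$ in $C_1$ with $c<_i d$ for all $i\ne j$; it has some $\leq_j$-sign, and whichever it is, the full pattern it realizes is also realized across $C_1,C_2$ by orienting a cross-pair according to that sign (legitimate since $C_1<_j C_2$ makes both $\leq_j$-orientations available between the two classes) with projections again giving $c<_i d$ for $i\ne j$. The $C_1$-pair forces $E$, the cross-pair forces $\neg E$: contradiction.

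The real obstacle is Case~(b): whereas Proposition~\ref{prop:product definable} governs products of independent \emph{ambient} orders, here one must control the joint behaviour of all $n$ orders \emph{restricted to a single $E$-class} $C_1$. This forces one to check that the images of $C_1$ in the various $W_i$ are infinite transitive $C_1$-definable subsets, so that Lemma~\ref{lemma:ind over par} genuinely applies and naming $C_1$ neither introduces intertwinings nor kills density, and to reconcile this with the fact that $\leq_j$ already linearly orders the $E$-classes. The argument in fact uses nothing about meet-irreducibility beyond $E$ being proper, nontrivial, and $\emptyset$-definable; a more conceptual alternative would be to pass to $\Gamma/E$, or to restrict to a class of the unique upper cover of $E$ and invoke Lemma~\ref{lemma:refineconvex} there, but the direct adaptation above looks most robust.
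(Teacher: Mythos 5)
Your argument has a genuine gap, and it occurs at the exact point where maximality is used in Lemma \ref{lemma:refineconvex}. You write ``Since $E$ is not $\leq_i$-convex we have $E\not\le E_i$'' (where $E_i$ is the maximal $\leq_i$-convex $\emptyset$-definable relation), and you need this in order to invoke Corollary \ref{cor:dense class}(1) and get density of the projections of an $E$-class in each $W_i$. But that implication is false once $E$ is not maximal: a relation can strictly refine a $\leq_i$-convex relation without itself being $\leq_i$-convex (this is precisely what happens for a generic subquotient order with bottom relation $E_i$). In that situation $E\le E_i$, an $E$-class projects to a \emph{single point} of $W_i=\Gamma/E_i$, and the whole density argument in both your Case (a) and Case (b) collapses. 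In Lemma \ref{lemma:refineconvex} the hypothesis that $F$ is maximal is exactly what rules this out ($F\le E_i$ forces $F=E_i$, hence convexity), so the proof does not transfer verbatim to lower elements of $\Lambda$.

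Your closing remark that the argument ``uses nothing about meet-irreducibility beyond $E$ being proper, nontrivial, and $\emptyset$-definable'' is a symptom of the same problem: the strengthened statement is false. In the catalog structures a meet-reducible relation $E=F\meet G$ with $F,G$ incomparable is in general convex with respect to \emph{none} of the orders $\leq_1,\dots,\leq_n$ (incomparable relations cannot share a convexifying order, and the generic subquotient orders do not accidentally convexify meets); this is why the remark following the corollary in the paper only asserts convexity for \emph{some} $\emptyset$-definable order, not one in the language. The paper's proof uses meet-irreducibility essentially: it fixes the unique cover $E^+$, restricts to an $E^+$-class $C^+$ where $E$ becomes maximal so that Lemma \ref{lemma:refineconvex} applies and yields two orders $\leq_i,\leq_j$ convexifying $E$ \emph{inside} $C^+$, and then globalizes by a separate argument (passing to the $\leq_i$-convex closure $\overline{C^+}$, comparing with the maximal convex relation $G$ there, and using Corollary \ref{cor:dense class} to force $C=C^+$, a contradiction, unless $E^+$ and hence $E$ is $\leq_i$-convex). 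That globalization step — upgrading ``convex within $E^+$-classes'' to ``convex in $\Gamma$'' — is the real content of the lemma, and it is absent from your sketch; the route you dismiss as an ``alternative'' is in fact the one that works.
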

\begin{proof}
Let $E \in \Lambda$ be meet-irreducible, and $E^+$ the cover of $E$. Fix an $E^+$-class $C^+$, and $C \subset C^+$ an $E$-class. By Lemma \ref{lemma:refineconvex}, there are distinct $i,j \leq n$ such that $C$ is $\leq_i$-convex and $\leq_j$-convex in $C^+$.

If $E^+$ is both $\leq_i$-convex and $\leq_j$-convex, we are finished, so assume neither $E^+$ nor $E$ is $\leq_i$-convex. Let $\overline{C^+}$ be the $\leq_i$-convex closure of $C^+$. The structure $(\overline{C^+};\leq_1,\ldots,\leq_n)$ is homogeneous. Let $G$ be the maximal $\emptyset$-definable equivalence relation that is $\leq_i$-convex in $\overline {C^+}$, so $G$ is also $\leq_i$-convex in $\Gamma$. Then $E^+$ does not refine $G$, since otherwise $C^+/G$ would equal $\overline{C^+}$; thus $E$ also does not refine $G$, since we cannot have $E=G$ as $E$ is not $\leq_i$-convex. By Corollary \ref{cor:dense class}, the projections of both $C$ and $C^+$ are dense in $(\overline{C^+}/G; \leq_i)$. As $C$ is $\leq_i$-convex in $C^+$, these projections must be equal. As $(\overline{C^+}/G; \leq_i)$ is without endpoints, applying $\leq_i$-convexity again gives $C = C^+$, which is a contradiction.
\end{proof}

\begin{corollary}
	Let $(\Gamma; \leq_1, \ldots, \leq_n)$ be homogeneous, then any $\emptyset$-definable equivalence relation is an intersection of convex $\emptyset$-definable equivalence relations.
\end{corollary}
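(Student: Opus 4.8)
The plan is to deduce the corollary from Lemma~\ref{lemma:intersectionconvex} together with a purely lattice-theoretic fact. First I would observe that since $(\Gamma;\leq_1,\dots,\leq_n)$ is homogeneous in a finite relational language, it is $\omega$-categorical and has quantifier elimination, so there are only finitely many $\emptyset$-definable binary relations; in particular the lattice $\Lambda$ of $\emptyset$-definable equivalence relations is finite. Recall also that the meet in $\Lambda$ is ordinary intersection of equivalence relations, so "meet of a family" and "intersection of a family" coincide.

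Next I would invoke the standard structural fact that in any finite lattice each element is the meet of the meet-irreducible elements lying above it. The quick justification: if some element failed this, pick $E$ maximal among the elements of $\Lambda$ that are not a meet of meet-irreducibles; then $E$ is not itself meet-irreducible and is not the top element $\bbone$, so $E = F \wedge F'$ for some $F, F' > E$, each of which is a meet of meet-irreducibles by maximality, whence so is $E$, a contradiction. The top element $\bbone$ needs no separate treatment, but in any case it is trivially $\leq_i$-convex for every $i$, since $\Gamma$ is its only class. (Note no distributivity of $\Lambda$ is needed for this step.)

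Combining these two observations: an arbitrary $\emptyset$-definable equivalence relation $E$ is the intersection of the (finitely many) meet-irreducible elements of $\Lambda$ above it, and by Lemma~\ref{lemma:intersectionconvex} each such meet-irreducible is convex — in fact $\leq_i$-convex for at least two indices $i$. Hence $E$ is an intersection of convex $\emptyset$-definable equivalence relations, as claimed. There is essentially no obstacle here; the only points requiring any care are recording that $\Lambda$ is finite and, pedantically, the treatment of $\bbone$, both routine.
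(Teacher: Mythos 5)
Your proof is correct and is essentially the argument the paper intends: the corollary is stated without proof as an immediate consequence of Lemma~\ref{lemma:intersectionconvex} plus the standard fact that in the (finite) lattice $\Lambda$ every element is the meet, i.e.\ intersection, of the meet-irreducibles above it. Your extra remarks on the finiteness of $\Lambda$ and the trivial case of $\bbone$ are fine but routine.
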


\begin{remark}
By the proof of \cite{BraunThesis}*{Lemma 3.4.10}, any intersection of convex equivalence relations is convex for some $\emptyset$-definable order (not necessarily one of $\leq_1,\ldots,\leq_n$).
\end{remark}

\begin{corollary} \label{cor:dim bound}
Let $\Lambda$ be a finite distributive lattice, $\Lambda_0$ the poset of meet-irreducibles of $\Lambda \bs \set{\bbzero, \bbone}$, and $\ell$ the minimum number of chains needed to cover $\Lambda_0$. Let $d_\Lambda$ be the minimum dimension of a homogeneous permutation structure with lattice of $\emptyset$-definable equivalence relations isomorphic to $\Lambda$. Then $2\ell \leq d_\Lambda$.
\end{corollary}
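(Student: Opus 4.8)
The plan is to fix a homogeneous permutation structure $(\Gamma;\leq_1,\dots,\leq_n)$ whose lattice of $\emptyset$-definable equivalence relations is isomorphic to $\Lambda$, with $n=d_\Lambda$, and to show $n\geq 2\ell$. By Dilworth's theorem, the minimum number of chains covering $\Lambda_0$ equals the maximum size of an antichain of $\Lambda_0$, so I may fix meet-irreducibles $E_1,\dots,E_\ell\in\Lambda$, none equal to $\bbzero$ or $\bbone$, that are pairwise incomparable in $\Lambda$. By Lemma~\ref{lemma:intersectionconvex}, each set $S_k:=\set{i\leq n \,|\, E_k\text{ is }\leq_i\text{-convex}}$ has size at least $2$. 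It then suffices to show $S_1,\dots,S_\ell$ are pairwise disjoint, since then $n\geq\sum_{k=1}^{\ell}\abs{S_k}\geq 2\ell$.

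The disjointness will follow from the following lemma, which is the heart of the matter: \emph{in a transitive $\omega$-categorical structure, the $\emptyset$-definable equivalence relations that are convex with respect to a fixed linear order form a chain under refinement}. Granting it, if some $i$ lay in $S_k\cap S_m$ with $k\neq m$, then $E_k$ and $E_m$ would be $\leq_i$-convex $\emptyset$-definable equivalence relations of $\Gamma$; but $\Gamma$ is $\omega$-categorical and transitive (any two points realize the same $1$-type, since $\set{(a,b)}$ is always a partial isomorphism of a permutation structure), so the lemma would force $E_k$ and $E_m$ to be comparable, contradicting that they are distinct members of an antichain. Hence $S_k\cap S_m=\emptyset$.

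To prove the lemma, let $E,E'$ be $\leq$-convex $\emptyset$-definable equivalence relations and suppose they are incomparable. Passing to the quotient by $E\wedge E'$ — which is again $\leq$-convex, so the quotient is a transitive $\omega$-categorical structure carrying an induced linear order in which the images of $E$ and $E'$ remain $\leq$-convex and incomparable — I may assume $E\wedge E'=\bbzero$; in particular $E,E'\neq\bbzero$, and by transitivity every $E$-class and every $E'$-class is infinite (a class of finite size $\geq 2$ would make ``being the maximum of one's class'' a nontrivial $\emptyset$-definable property, violating transitivity). Now for any point $a$, the sets $a/E$ and $a/E'$ are $\leq$-convex, both contain $a$, and meet only in $a$; since two convex sets sharing a single point $a$ must sit on opposite sides of $a$, one of $a/E,a/E'$ lies in $\set{x\,|\,x\leq a}$ and the other in $\set{x\,|\,x\geq a}$. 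Whether $a/E$ lies below or above $a$ is a $\emptyset$-definable property of $a$, hence (by transitivity) the same for every $a$; so either every point is the $\leq$-minimum of its $E$-class or every point is the $\leq$-maximum of its $E$-class. In the first case, $a\mathrel{E}b$ gives $a\leq b$ and $b\leq a$, so $a=b$ and $E=\bbzero$; the second case gives $E=\bbzero$ likewise. Either way we contradict $E\neq\bbzero$, proving the lemma.

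The only genuine difficulty is the lemma; the remainder is bookkeeping with Dilworth's theorem and Lemma~\ref{lemma:intersectionconvex}. Within the lemma, the point needing a little care is that the quotient by $E\wedge E'$ really does preserve transitivity, $\omega$-categoricity, and incomparability (the last because $\overline{E}\leq\overline{E'}$ in the quotient would pull back to $E\leq E'$). Everything else reduces to the elementary fact that two convex sets meeting in a single point lie on opposite sides of it, together with transitivity of $\Gamma$. Incidentally, distributivity of $\Lambda$ plays no role in this argument.
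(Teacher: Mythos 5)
Your proposal is correct and follows essentially the same route as the paper: Lemma \ref{lemma:intersectionconvex} supplies at least two convexifying orders for each element of $\Lambda_0$, incomparable $\emptyset$-definable equivalence relations cannot be convex with respect to the same order, and a Dilworth antichain of size $\ell$ then forces $2\ell \leq d_\Lambda$. The paper asserts the second fact without proof and leaves the antichain count implicit; your quotient-and-transitivity argument for that fact and your explicit counting are both correct.
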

\begin{proof}
By Lemma \ref{lemma:intersectionconvex}, every element of $\Lambda_0$ must be convex for at least two linear orders in the language. However, if $E, F \in \Lambda$ are incomparable, then they cannot be convex with respect to the same order.
\end{proof}

\subsection{Distributivity and genericity}
We now prove distributivity of the lattice of $\emptyset$-definable equivalence relations and genericity of the reduct to the language of $\emptyset$-definable equivalence relations. Distributivity is proven first, although by Proposition \ref{prop:genericLambdaUltrametric} it follows from genericity, as we will use Proposition \ref{prop:genericLambdaUltrametric} to prove genericity.

\begin{definition}
Two equivalence relations $E$ and $F$ are \textit{cross-cutting} if every $E$-class intersects every $F$-class.
\end{definition}

We now prove that two $\emptyset$-definable equivalence relations are cross-cutting if their join is $\bbone$. Note this would be immediate if we already knew the genericity of the reduct to the language of $\emptyset$-definable equivalence relations.

\begin{lemma}\label{lemma:two relations}
	Let $(M;E,F,\cdots)$ be $\omega$-categorical, transitive, and order-like, where $E$ and $F$ are $\emptyset$-definable equivalence relations. Let $a,a',b\in M$ such that $aEa'$ and $aFb$, then there is $b'\in M$ with $bEb'$ and $a'Fb'$.
\end{lemma}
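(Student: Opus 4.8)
The goal is to show $(b/E)\cap(a'/F)\neq\emptyset$; any element $b'$ of this intersection satisfies $bEb'$ and $a'Fb'$. Two cases are immediate: if $aEb$ then $bEa'$ (as $aEa'$), so $b':=a'$ works; if $aFa'$ then $a'Fb$ (as $aFb$), so $b':=b$ works. So I would assume $\neg(aEb)$ and $\neg(aFa')$, whence also $\neg(a'Eb)$ and $\neg(a'Fb)$; writing $p:=\tp(a,a')$ and $r:=\tp(a,b)$, the type $p$ then forces $E\wedge\neg F$ and $r$ forces $F\wedge\neg E$, and by order-likeness the relations $R_p=\{(x,y):\tp(x,y)=p\}$ and $R_r$ are strict partial orders (transitive, and irreflexive since $p,r$ are nontrivial) contained in $E$ and $F$ respectively. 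It is also harmless to pass to the $(E\vee F)$-class $N$ of $a$: it contains $a'$ and $b$, and hence our eventual $b'$, and since $E\vee F$ is a $\emptyset$-definable equivalence relation the structure induced on $N$ (with its setwise stabilizer) is again $\omega$-categorical, transitive, and order-like — the orbit of any pair from $N$ under that stabilizer is just its $\mathrm{Aut}(M)$-orbit intersected with $N^2$, so transitivity of the type-relations is inherited — and on $N$ the relation $E\vee F$ is trivial. So one may assume any two points are joined by an alternating $E/F$-path, of length bounded uniformly.

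The crux must use order-likeness essentially. I would argue by contradiction, assuming $(b/E)\cap(a'/F)=\emptyset$. Choose $\sigma\in\mathrm{Aut}(M)$ with $\sigma(a)=b$; since $a$ and $b$ lie in the common $F$-class $D:=a/F$, $\sigma$ fixes $D$ setwise, and $c:=\sigma(a')$ lies in $b/E$ (because $a'Ea$) but, by assumption, outside $a'/F$. Iterating $\sigma$ produces, inside $D$, an ``$r$-ladder'' $\dots,\ \sigma^{-1}(a),\ a,\ b=\sigma(a),\ \sigma^{2}(a),\dots$ all of whose increasing pairs have type $r$ (consecutive ones by construction, the rest by order-likeness), together with companion points $a'=\sigma^{0}(a'),\ c=\sigma(a'),\ \sigma^{2}(a'),\dots$, each $E$-equivalent to the corresponding rung. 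The plan is to combine the (shift-invariant) failure of the intersection along this ladder with order-likeness applied to the type $\tp(a',b)$, or to $r$ itself, to manufacture a triple $x,y,z$ with $\tp(x,y)=\tp(y,z)$ and $\tp(x,z)$ different, contradicting order-likeness, in the spirit of the proof of Lemma~\ref{lemma:nocut}.

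The main obstacle is exactly this last manufacturing step, and the reason a softer argument is not available is worth isolating: the conditions ``$bEb'$'' and ``$a'Fb'$'' determine neither $\tp(b,b')$ nor $\tp(a',b')$, and every attempt to realize $b'$ by amalgamating a complete type over $\{a,a',b\}$ — for instance demanding $\tp(a',b')=r$ (which gives $a'Fb'$) and $\tp(b,b')=p^{\pm1}$ (which gives $bEb'$) — reduces to asking that a $3$-point configuration $\{a',b,b'\}$ with $\tp(a',b')\vdash F$ and $\tp(b,b')\vdash E$ lie in the age of $M$, which is again an instance of the statement being proved. Thus order-likeness has to be invoked to force a genuinely new finite configuration into the age, and pinning down which configuration, and how order-likeness delivers it, is where the real difficulty lies.
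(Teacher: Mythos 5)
Your proposal is not a complete proof: after the correct but routine preliminaries (the two trivial cases, the reduction to the $(E\vee F)$-class, the observation that the relevant pair types are transitive), the step that would actually establish the lemma---extracting a contradiction from the ``$r$-ladder'', or equivalently producing $b'$---is explicitly left open, and that step is the entire content of the statement. Moreover, the route you sketch is itself doubtful: every pair type realized along the orbit $\dots,\sigma^{-1}(a),a,b,\sigma^2(a),\dots$ together with the companion points $\sigma^k(a')$ is already a type realized in $M$, and order-likeness is a closure condition on the type space, so it is unclear that any finite configuration sitting inside $M$ can ever ``contradict'' it; the hypothesis has to be used positively, to force a new configuration into existence, not negatively in a proof by contradiction. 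Your own diagnosis of the circularity (prescribing $\tp(b,b')$ and $\tp(a',b')$ in advance just reproduces the lemma) is accurate, but the conclusion to draw is that one should not prescribe those types at all.

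This is exactly how the paper's proof goes, and it is short and direct. Work with pairs (element, $E$-class), i.e.\ with types in $M^{\mathrm{eq}}$. Put $e_1=a/E$, $e_2=b/E$. By transitivity there is an $E$-class $e_3$ with $\tp(b,e_3)=\tp(a',e_2)$ (image of $e_2$ under an automorphism sending $a'\mapsto b$). Since then $\tp(e_1,e_2)=\tp(e_2,e_3)$, one finds $c\in e_2$ with $\tp(c,e_3)=\tp(a,e_2)$, and then $d\in e_3$ with $\tp(c,d)=\tp(a,c)$. Order-likeness is used exactly once, to conclude $\tp(a,d)=\tp(a,c)$; this allows one to copy the triangle $(a,c,b)$ onto $(a,d,d')$ for some $d'$, and since $cEb$ and $aFb$ this gives $d'\in e_3$ with $bFd'$. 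Finally, transporting the pair $(b,e_3)$ back to $(a',e_2)$ by an automorphism yields $b'\in e_2$ with $a'Fb'$, i.e.\ $bEb'$ and $a'Fb'$. Note how this sidesteps the obstacle you isolated: no complete type of $b'$ over $\{a,a',b\}$ is ever specified; the desired crossing point is manufactured in a different pair of classes, where order-likeness guarantees its existence, and is then pulled back. That transport-and-compose idea is what is missing from your argument.
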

\begin{proof}
	Let $e_1$ be the $E$-class of $a$ and $e_2$ the $E$-class of $b$. Take $e_3$ so that $\tp(b,e_3)=\tp(a',e_2)$. Next take $c\in e_2$ such that $\tp(c,e_3)=\tp(a,e_2)$ (this is possible as $\tp(e_1,e_2)=\tp(e_2,e_3)$). Finally let $d\in e_3$ be such that $\tp(a,c)=\tp(c,d)$, so, since $M$ is order-like, $\tp(a,d)=\tp(a,c)$. Let $d'$ be such that $\tp(a,d,d')=\tp(a,c,b)$. Then we have $aFb$ and $dEd'$. So $d'\in e_3$ and $bFd'$. Since $\tp(b,e_3)=\tp(a',e_2)$, there is $b'\in e_2$ such that $a'Fb'$. 
\end{proof}

\begin{corollary} \label{cor:twocross}
Let $(M;E,F,\cdots)$ be $\omega$-categorical, transitive, and order-like, with lattice of $\emptyset$-definable equivalence relations $\Lambda$. Let $E, F \in \Lambda$ such that $E \join F = \bbone$. Then $E$ and $F$ are cross-cutting.
\end{corollary}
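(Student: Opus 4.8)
The plan is to compress the conclusion into a single $\emptyset$-definable equivalence relation and then invoke $E\vee F=\bbone$. Define the binary relation $S$ on $M$ by $S(x,y)\iff \exists z\,(xEz\wedge zFy)$. Since $M$ is $\omega$-categorical, $S$ is $\emptyset$-definable, and ``$E$ and $F$ are cross-cutting'' is literally the assertion that $S=\bbone$. Moreover $S$ visibly contains $E$ (take $z=y$) and $F$ (take $z=x$). So it will be enough to verify that $S$ is an equivalence relation: then $S\in\Lambda$ with $S\geq E$ and $S\geq F$, hence $\bbone=E\vee F\leq S\leq\bbone$, forcing $S=\bbone$.

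Reflexivity of $S$ is immediate (take $z=x$). For symmetry and transitivity I would apply Lemma~\ref{lemma:two relations} to suitable instances. For symmetry: from $xEz$ and $zFy$ apply the lemma with the roles $a=z$, $a'=x$, $b=y$ (legitimate since $zEx$ and $zFy$), producing $b'$ with $yEb'$ and $xFb'$, i.e.\ $S(y,x)$. For transitivity: given a witness $z_1$ for $S(x,y)$ and a witness $z_2$ for $S(y,w)$, we have $yEz_2$ and $yFz_1$; apply the lemma with $a=y$, $a'=z_2$, $b=z_1$ to get $b'$ with $z_1Eb'$ and $z_2Fb'$. Then $xEz_1Eb'$ and $b'Fz_2Fw$, so $xEb'$ and $b'Fw$, i.e.\ $S(x,w)$. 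Throughout one uses that $E,F$ are equivalence relations and that $M$ is $\omega$-categorical, transitive, and order-like --- exactly the hypotheses Lemma~\ref{lemma:two relations} needs, and exactly those of the corollary.

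I do not expect a genuine obstacle here: all the work is in Lemma~\ref{lemma:two relations}, which furnishes precisely the square-completion needed to close $S$ under symmetry and transitivity; the only mild point is the routine fact that the existentially defined $S$ is $\emptyset$-definable in an $\omega$-categorical structure, so that it lies in $\Lambda$ and is comparable to $E\vee F$. An equivalent route, if one prefers to avoid introducing $S$, is to induct on the length of an $E$--$F$ alternating path witnessing $x\,(E\vee F)\,y$, peeling off one step with Lemma~\ref{lemma:two relations} at a time; this carries out the same computation with the induction in place of checking transitivity of $S$.
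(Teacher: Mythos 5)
Your proof is correct and follows essentially the same strategy as the paper: use Lemma \ref{lemma:two relations} to manufacture a $\emptyset$-definable equivalence relation lying above both $E$ and $F$ in $\Lambda$, and then invoke $E \join F = \bbone$. The only difference is packaging — you verify directly that the composition $S = E\circ F$ is an equivalence relation (i.e.\ that $E$ and $F$ commute), whereas the paper works with the relation ``the $E$-classes of $a$ and $b$ meet the same $F$-classes'' — and both reductions are sound.
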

\begin{proof}
We first show that Lemma \ref{lemma:two relations} implies that given two $E$-classes $e_1$ and $e_2$, either $e_1$ and $e_2$ intersect the same $F$-classes, or they intersect disjoint sets of $F$-classes. Let $f$ be an $F$-class such that $e_1$ and $e_2$ intersect $f$. Let $a \in e_1 \cap f$ and $b \in e_2 \cap f$. Given any $a' \in e_1$, let $f' = a'/F$. Then by Lemma \ref{lemma:two relations}, we may find some $b' \in e_2 \cap f'$.

Now, let $G$ be the equivalence relation on $M$ which holds for $(a,b)$ if the $E$-class of $a$ and the $E$-class of $b$ intersect the same $F$-classes. Then $G$ is definable and is coarser than both $E$ and $F$, and in fact $G = E \join F$. As $E \join F = \bbone$, $E$ and $F$ must be cross-cutting.
\end{proof}


\begin{lemma}\label{lemma:nice lattice}
Let $(\Gamma;\leq_1,\ldots,\leq_n)$ be homogeneous. Let $E,F,G$ be $\emptyset$-definable equivalence relations such that $E$ is maximal, $F \join G = \bbone$, and neither $F$ nor $G$ refines $E$. Then $F\wedge G$ does not refine $E$.
\end{lemma}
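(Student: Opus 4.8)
The plan is to make a few easy lattice‑theoretic reductions and then derive a contradiction from an application of Proposition~\ref{prop:trivial acl} to a definable function into the minimal order $\Gamma/E$.

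\emph{Reductions.} Since $E$ is maximal and neither $F$ nor $G$ refines $E$, we get $E\join F=E\join G=\bbone$; together with the hypothesis $F\join G=\bbone$, Corollary~\ref{cor:twocross} makes all three pairs $\{E,F\}$, $\{E,G\}$, $\{F,G\}$ cross‑cutting, so in particular every $E$‑class meets every $F$‑class and every $G$‑class, and every $F$‑class meets every $G$‑class. We may assume $\Gamma$ transitive. By Lemma~\ref{lemma:refineconvex} fix a linear order $\leq$ of the language for which $E$ is convex; by maximality $E$ is then the coarsest proper $\leq$‑convex $\emptyset$‑definable equivalence relation, so by Lemma~\ref{lemma:toprank1} the quotient $V:=(\Gamma/E;\leq,\cdots)$ has topological rank $1$, and, being primitive, it is transitive --- hence dense by Lemma~\ref{lemma:acl} and weakly transitive, i.e.\ minimal. (Note $V$ is automatically infinite: it is primitive and carries the $\emptyset$‑definable linear order $\leq$.) Finally, since $F,G\not\le E$, Corollary~\ref{cor:dense class}(1) shows every $F$‑class and every $G$‑class projects onto a $\leq$‑dense, hence infinite, subset of $V$.

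\emph{A definable function and the contradiction.} Suppose toward a contradiction that $F\wedge G$ refines $E$. Then for any $a,b\in\Gamma$ the set $a/F\cap b/G$ is nonempty by cross‑cutting, and for $u,v$ in it we have $u\,F\,v$ and $u\,G\,v$, hence $u\,(F\wedge G)\,v$, hence $u\,E\,v$: so $a/F\cap b/G$ lies in a single $E$‑class. Let $f\colon\Gamma^2\to\Gamma/E$ send $(a,b)$ to that $E$‑class. Then $f$ is $\emptyset$‑definable, it factors through $a/F$ and $b/G$, it is surjective onto $\Gamma/E$ (as $f(p,p)=p/E$), and for fixed $a$ the map $b\mapsto f(a,b)=\pi(a/F\cap b/G)$ has image the full projection $\pi(a/F)$, which is infinite. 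Now fix a thorn‑independent type $p$ on $\Gamma\times\Gamma$ (Remark~\ref{rem:trivial acl}(1)). As $\Gamma$ is NIP and binary and $V$ is a $\emptyset$‑definable minimal linear order, Proposition~\ref{prop:trivial acl} gives $f(a,b)\in\dcl(a)\cup\dcl(b)$ for all $(a,b)\models p$; since the two alternatives are $\emptyset$‑definable conditions and $p$ is complete, we may assume $f(a,b)\in\dcl(a)$ for all $(a,b)\models p$. Fixing $a_0$, this forces $f(a_0,b)$ to lie in the fixed finite set $\dcl(a_0)\cap(\Gamma/E)$ for every $b$ with $(a_0,b)\models p$; one then reaches a contradiction by showing that such $b$'s are plentiful enough --- their $G$‑classes sweep out enough of $a_0/F$ --- that $\pi(a_0/F\cap b/G)$ nonetheless takes infinitely many values.

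\emph{Main obstacle.} The crux is precisely this last step: verifying that the thorn‑independent partners of a fixed $a_0$ witness infinitely many of the values $\pi(a_0/F\cap b/G)$, equivalently that only ``algebraically many'' $G$‑classes are thorn‑dependent on $a_0$. This should follow from the trivial‑algebraic‑closure phenomenon of minimal orders underlying Proposition~\ref{prop:trivial acl} (a thorn‑independent $b$ avoids $a_0/F$ and $a_0/G$ by Remark~\ref{rem:trivial acl}(2), and one must rule out that all but finitely many of the remaining $G$‑classes become thorn‑dependent on $a_0$). Two further technical points are the symmetric ``$\dcl(b)$'' alternative in Proposition~\ref{prop:trivial acl}, handled by running the same argument with the two coordinates (equivalently $F$ and $G$) interchanged, and any degenerate cases in which $F$ or $G$ has only finitely many classes, which must be disposed of directly (e.g.\ $F=\bbone$ would give $F\wedge G=G\le E$, contradicting $G\not\le E$).
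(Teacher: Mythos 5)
Your setup is fine as far as it goes, but the proof is not complete: the step you yourself flag as the ``main obstacle'' is a genuine gap, and it is not a routine one. After concluding $f(a,b)\in\dcl(a)$ for all $(a,b)\models p$, membership in $\dcl(a_0)$ is equivalent to $f(a_0,\cdot)$ being constant on the (single) $\mathrm{Aut}(\Gamma/a_0)$-orbit of realizations of $p(a_0,y)$, so to get a contradiction you must produce two realizations $b,b'$ of $p(a_0,y)$ whose $G$-classes meet $a_0/F$ in distinct $E$-classes. Nothing you have cited gives this: Remark \ref{rem:trivial acl} only guarantees the existence of one thorn-independent pair and says nothing about how the thorn-independent partners of a fixed $a_0$ distribute among $G$-classes, and you cannot appeal to genericity or distributivity of the lattice, since Proposition \ref{prop:distributivity} and Proposition \ref{prop:homreduct} are proved \emph{after} (and using) this lemma. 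So the decisive step of your argument is exactly the unproved ``plentifulness'' claim.

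The paper avoids this entirely by applying Proposition \ref{prop:trivial acl} in the quotient sorts rather than the home sort: take $X=\Gamma/F$, $Y=\Gamma/G$, pick a thorn-independent pair of classes $(C_F,C_G)$, use Corollary \ref{cor:twocross} to find a single point $a$ with $a/F=C_F$ and $a/G=C_G$, and use the map $f(x/F,x/G)=x/E$, which is well defined precisely because $F\wedge G\leq E$ is the assumption being refuted. The conclusion $a/E\in\dcl(a/F)\cup\dcl(a/G)$ then yields the contradiction in one line: if, say, $a/E\in\dcl(a/F)$, then any automorphism moving $a$ to another point of its $F$-class fixes $a/F$ setwise, hence fixes $a/E$, so the whole $F$-class of $a$ lies in a single $E$-class, and transitivity gives that $F$ refines $E$ --- contrary to hypothesis. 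No counting of values, and no claim about many thorn-independent partners, is needed. I recommend you recast your final step this way (your reductions, the well-definedness observation, and the symmetry argument for the two $\dcl$ alternatives all carry over unchanged).
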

\begin{proof}
Assume that $F\wedge G$ refines $E$. Pick any $F$-class $C_F$, let $C_G$ be a thorn-independent $G$-class, and let $p(x,y)$ be the type of $(C_F, C_G)$. By Corollary \ref{cor:twocross}, we may find $a$ such that $a/F = C_F$ and $a/G = C_G$. We then have a function $f\colon p(\Gamma/F,\Gamma/G) \to \Gamma/E$, given by $f(x/F,x/G)=x/E$. This is well-defined as $F\wedge G \leq E$. By Proposition \ref{prop:trivial acl}, $a/E \in \dcl(a/F) \cup \dcl(a/G)$. Assume say $a/E \in \dcl(a/F)$, then the $F$-class of $a$ is included in an $E$-class (by transitivity of $\Gamma$), so $F$ refines $E$.
\end{proof}

\begin{proposition} \label{prop:distributivity}
Let $(\Gamma;\leq_1,\ldots,\leq_n)$ be homogeneous, with lattice of $\emptyset$-definable equivalence relations $\Lambda$. Then $\Lambda$ is distributive.
\end{proposition}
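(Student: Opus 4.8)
The plan is to show that $\Lambda$ is \emph{modular} and contains no sublattice isomorphic to the diamond $M_3$; by the standard characterization of distributive lattices (forbidding $M_3$ and the pentagon $N_5$, the latter being equivalent to modularity), this yields distributivity.

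For modularity, I would first show that any two $\emptyset$-definable equivalence relations $E,F$ of $\Gamma$ \emph{permute}, i.e.\ $E\circ F=F\circ E$. Since $\Gamma$ is transitive, order-like and $\omega$-categorical, this is immediate from Lemma~\ref{lemma:two relations}: if $a(E\circ F)b$ via $aEc$ and $cFb$, then applying Lemma~\ref{lemma:two relations} in the roles $(a,a',b)\mapsto(c,a,b)$ produces $b'$ with $bEb'$ and $aFb'$, so $a(F\circ E)b$. Consequently $E\circ F$ is itself a $\emptyset$-definable equivalence relation (reflexivity and $\emptyset$-definability are clear, symmetry from $(E\circ F)^{-1}=F\circ E=E\circ F$, transitivity from $(E\circ F)\circ(E\circ F)=(E\circ E)\circ(F\circ F)=E\circ F$), so $E\vee F=E\circ F$ in $\Lambda$. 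Modularity then follows by the usual argument: if $E\le G$ and $(x,y)\in(E\vee F)\wedge G$, pick $z$ with $xEz$ and $zFy$; then $xGz$, hence $zGy$, so $z(F\wedge G)y$, giving $(x,y)\in E\circ(F\wedge G)\le E\vee(F\wedge G)$.

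It remains to rule out $M_3$. Suppose $\set{B_0,u_1,u_2,u_3,T_0}\subseteq\Lambda$ is a sublattice isomorphic to $M_3$, with $u_i\vee u_j=T_0$ and $u_i\wedge u_j=B_0$ for $i\ne j$. Fix a $T_0$-class $C$ and pass to the induced structure on $C$ in the language $\set{\leq_1,\ldots,\leq_n}$; this is again a homogeneous finite-dimensional permutation structure (the $\leq_i$ restrict to linear orders, and since $C$ is definable over any of its points, every finite partial automorphism of $C$ extends to an automorphism of $\Gamma$ preserving $C$ setwise). Writing $u_i'$ for the restriction of $u_i$ to $C$, one checks that $u_1',u_2',u_3'$ remain distinct proper $\emptyset$-definable equivalence relations of $C$ with $u_i'\vee u_j'=\bbone_C$ (the full relation) and $u_i'\wedge u_j'=B_0|_C=:B'<\bbone_C$ for $i\ne j$: the joins are $\bbone_C$ because $C$ is a single $(u_i\vee u_j)$-class and the relations permute, while any failure of distinctness, properness, or of these (in)equalities would propagate back to $\Lambda$ (as $u_i,u_j\le T_0$ are determined by their restrictions to the mutually isomorphic $T_0$-classes), contradicting $M_3$. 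Since $B'<\bbone_C$ and $C$ is $\omega$-categorical, choose a maximal $\emptyset$-definable equivalence relation $E$ of $C$ with $E\ge B'$. For each $i\ne j$, Lemma~\ref{lemma:nice lattice}, applied in $C$ and read contrapositively (using $u_i'\vee u_j'=\bbone_C$ and $u_i'\wedge u_j'=B'\le E$), gives $u_i'\le E$ or $u_j'\le E$. If $u_1'\not\le E$ then $u_2'\le E$ and $u_3'\le E$, so $\bbone_C=u_2'\vee u_3'\le E$, contradicting $E\ne\bbone_C$; hence $u_1'\le E$, and symmetrically $u_2'\le E$, whence $\bbone_C=u_1'\vee u_2'\le E$, again a contradiction. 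So $\Lambda$ has no $M_3$ sublattice and is therefore distributive. The step I expect to be the main obstacle is the relativization: verifying that the induced structure on a $T_0$-class is genuinely a homogeneous finite-dimensional permutation structure (so that Lemma~\ref{lemma:nice lattice}, whose proof invokes Proposition~\ref{prop:trivial acl} and hence binarity and NIP, applies to it), together with the bookkeeping that the $u_i'$ still form a copy of $M_3$ with top $\bbone_C$.
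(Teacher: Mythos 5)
Your proof is correct, and it splits into a half that mirrors the paper and a half that is genuinely different. For $M_3$ you do essentially what the paper does: relativize to a class of the top element (the paper likewise says ``pick a $G$-class and work within it''), note the restricted relations still form an $M_3$ with top $\bbone$, and feed a maximal (coatom) equivalence relation into Lemma~\ref{lemma:nice lattice}; your bookkeeping (contrapositive applied to all three pairs above a coatom containing the bottom) versus the paper's (a coatom $F'\geq F_1$, so that neither $F_2$ nor $F_3$ refines $F'$ while $F_2\meet F_3\leq F_1\leq F'$) is the same argument. Your worry about the relativization is legitimate but harmless: since $\Gamma$ has quantifier elimination in the binary language of the orders, the $\emptyset$-definable equivalence relations are quantifier-free definable, so a partial order-isomorphism inside a class preserves them and extends to an automorphism of $\Gamma$ fixing the class setwise; the paper uses this kind of relativization without comment (also in Lemma~\ref{lemma:intersectionconvex}). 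Where you diverge is $N_5$: the paper excludes it directly, using Corollary~\ref{cor:twocross} to get that $F_1$ and $F_3$ are cross-cutting and then the external fact that each $F_2$-class splits into infinitely many $F_3$-classes (citing \cite{BraunThesis}) to see $F_1\meet F_3\neq F_1\meet F_2$. You instead extract from Lemma~\ref{lemma:two relations} that any two $\emptyset$-definable equivalence relations permute, so $E\join F=E\circ F$ is itself $\emptyset$-definable, and then the classical argument gives the modular law, hence no $N_5$. This is a clean and slightly more conceptual route: it avoids the citation to the thesis lemma and isolates permutability, which is exactly the content Corollary~\ref{cor:twocross} packages as cross-cutting in the case $E\join F=\bbone$; the paper's version, in exchange, never needs to discuss composition of relations or the identification of the lattice join with the set-theoretic one. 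Your use of permutability again when checking that the restricted joins equal $\bbone$ on the class is the right way to handle that subtlety.
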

\begin{proof}
We must prove the lattices $M_3$ and $N_5$ (see Figure \ref{fig:lat}) do not appear as sublattices of $\Lambda$.

\begin{figure}[h]
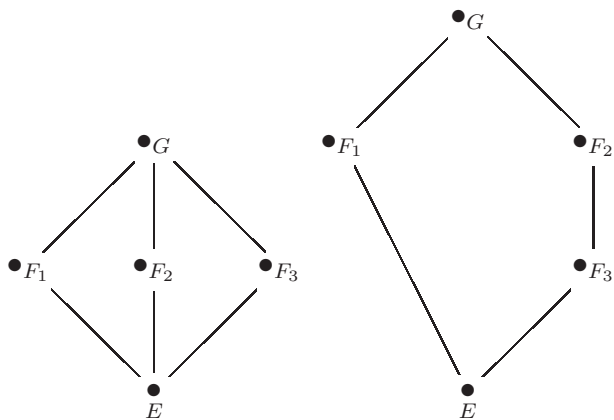

\begin{diagram}[size=2em]
& & & & & & & \bullet_G \\
& & & & & & \ruLine & & \luLine \\
& & \bullet_G & & &  \bullet_{F_1} & & & & \bullet_{F_2} \\
& \ruLine & \uLine & \luLine & & & \rdLine(2,4)  & &  & \uLine \\
 \bullet_{F_1} & & \bullet_{F_2} & & \bullet_{F_3}  & {} & &  & & \bullet_{F_3} &   \\
& \luLine & \uLine  & \ruLine  & & &   &  & \ldLine  \\
& & \underset{E}{\bullet} & & & & &  \underset{E}{\bullet}  \\
\end{diagram}
\caption{$M_3$ and $N_5$}
\label{fig:lat}
\end{figure}

Suppose $M_3$ appears as a sublattice. Let $E$ be the minimum element, $G$ the maximum element, and $F_1, F_2, F_3$ the non-trivial elements. We pick a $G$-class and work within it, so we may assume $G = \bbone$. Let $F' \geq F_1$ be maximal below $G$. Then neither $F_2$ nor $F_3$ refines $F'$, so by Lemma \ref{lemma:nice lattice} neither does $F_2 \meet F_3$. But this is a contradiction.

Now suppose $N_5$ appears as a sublattice. Let $E$ be the minimum element, $G$ the maximum element, and $F_1, F_2, F_3$ the non-trivial elements, with $F_2 > F_3$. We pick a $G$-class and work within it, so we may assume $G = \bbone$. Then $F_1$ and $F_3$ are cross-cutting by Corollary \ref{cor:twocross}, as $F_1 \join F_3 = \bbone$. As there are infinitely many $F_3$-classes in each $F_2$-class (see \cite{BraunThesis}*{Lemma 5.2.2}), we cannot have that $F_1 \meet F_3 = F_1 \meet F_2$.
\end{proof}

\begin{proposition} \label{prop:homreduct}
Let $(\Gamma, \leq_1, \ldots, \leq_n)$ be homogeneous. Then the reduct of $\Gamma$ to the language of $\emptyset$-definable equivalence relations is generic.
\end{proposition}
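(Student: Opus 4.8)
The plan is to recognise the reduct as the generic $\Lambda$-ultrametric space by a richness argument. First I would invoke Proposition~\ref{prop:distributivity}, so that $\Lambda$ is distributive and hence, by Proposition~\ref{prop:genericLambdaUltrametric}, the class $\MM_\Lambda$ of finite $\Lambda$-ultrametric spaces is an amalgamation class; let $\Delta$ denote its \fraisse limit. By Proposition~\ref{prop:lambdaisomorphism} the reduct $\Gamma_0$ of $\Gamma$ to the language of $\emptyset$-definable equivalence relations is, up to the functorial translation, a $\Lambda$-ultrametric space; it is $\omega$-categorical because $\Gamma$ is, and its age is contained in $\MM_\Lambda$. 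Thus it suffices to prove the one-point extension property: for every finite $A\subseteq\Gamma$ and every extension $A\cup\set b\in\MM_\Lambda$ there is $b'\in\Gamma$ with $A\cup\set{b'}\cong A\cup\set b$ over $A$. This, together with the age of $\Gamma_0$ being contained in $\MM_\Lambda$, identifies $\Gamma_0$ with $\Delta$, and in particular makes $\Gamma_0$ homogeneous and generic.

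Next I would reduce the extension property to a placement problem. Since $\Gamma$ is homogeneous in the full language, realising $b$ amounts to finding one quantifier-free type over $A$ in the orders $\leq_1,\dots,\leq_n$ that is realised in $\Gamma$ and induces the prescribed distances. By distributivity the prescribed distances are equivalent to specifying, for each meet-irreducible $E\in\Lambda$, the set $S_E=\set{a\in A: E(b,a)}$, and the triangle inequality for $A\cup\set b$ says precisely that a nonempty $S_E$ equals $A\cap C_E$ for a uniquely determined $E$-class $C_E$. So the task is to place $b'$ simultaneously inside $C_E$ for each meet-irreducible $E$ with $S_E\neq\emptyset$ and inside a fresh $E$-class (one disjoint from $A$) for each meet-irreducible $E$ with $S_E=\emptyset$. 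Each such requirement is a single convexity constraint: by Lemma~\ref{lemma:intersectionconvex}, $E$ is $\leq_j$-convex for at least two indices $j$, so ``$b'\in C_E$'' follows from putting $b'$ in the $\leq_j$-interval spanned by $C_E$, while ``$b'$ in a fresh class'' follows from putting $b'$ in a $\leq_j$-gap between the finitely many $E$-classes met by $A$ (or past their extremes); such gaps exist and are wide because, by Corollary~\ref{cor:dense class}, the relevant classes are dense and define no cuts.

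Finally I would argue that these finitely many interval constraints, spread over the orders, are jointly realisable in $\Gamma$ and realise \emph{exactly} the prescribed distances. Here I would pass to the quotients $V_i=\Gamma/E_i$ by the maximal $\leq_i$-convex $\emptyset$-definable equivalence relations, and to their representatives $W_1,\dots,W_m$ up to monotone definable bijection, which by the proof of Lemma~\ref{lemma:refineconvex} are pairwise independent minimal orders; Proposition~\ref{prop:product definable} then reduces joint realisability to choosing, independently in each $W_i$, a point of a prescribed nonempty open interval, which is possible by Proposition~\ref{prop:dense definable}, while Corollary~\ref{cor:dense class} and the order-likeness of $\Gamma$ prevent the realised distances from being finer than prescribed. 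Compatibility of a ``fresh class'' demand for a meet-irreducible $E$ with a ``prescribed class'' demand for an incomparable $E'$ would be handled by distributivity together with the cross-cutting statements Corollary~\ref{cor:twocross} and Lemma~\ref{lemma:nice lattice}.

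The hard part will be exactly this last step: assembling the per-meet-irreducible convexity placements into a single consistent quantifier-free type over $A$ realised by an actual point of $\Gamma$, while certifying that nothing finer than the prescribed distance pattern is forced. The two delicate points are ruling out accidental membership of $b'$ in a class it should avoid --- controlled by density of classes (Corollary~\ref{cor:dense class}) and independence of the minimal quotients (Lemma~\ref{lemma:refineconvex}, Proposition~\ref{prop:product definable}) --- and resolving simultaneous ``fresh class'' demands coming from incomparable meet-irreducibles, which needs the join-interaction lemmas (Corollary~\ref{cor:twocross}, Lemma~\ref{lemma:nice lattice}).
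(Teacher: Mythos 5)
Your overall strategy---reduce to a one-point extension property for the equivalence-relation reduct and then realize the required placement inside $\Gamma$---starts the same way as the paper, but the realization step you sketch has a genuine gap, and it is exactly at the point you yourself flag as ``the hard part.'' You propose to convert each constraint ``$b'\in C_E$'' ($E$ meet-irreducible, $S_E\neq\emptyset$) into an interval constraint in some order $\leq_j$ for which $E$ is convex, and then to realize all constraints at once by passing to the quotients $W_i$ of $\Gamma$ by the \emph{maximal} $\leq_i$-convex relations and applying Propositions \ref{prop:product definable} and \ref{prop:dense definable}. This cannot work as stated: a class $C_E$ of a meet-irreducible $E$ that lies strictly below the maximal convex relation $E_j$ is contained in a single $E_j$-class, so it projects to a \emph{single point} of $W_j$, not to a nonempty open interval. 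The independence/density machinery for the minimal quotients only controls the image of $b'$ modulo the maximal convex relations; it says nothing about which class of a finer relation $b'$ lands in inside a given fiber, and the constraints coming from incomparable meet-irreducibles low in $\Lambda$ live inside such fibers. (There is also a smaller issue: forcing $b'\in C_E$ by convexity requires endpoints \emph{inside} $C_E$ on both sides of $b'$, and $S_E=A\cap C_E$ may be a single point or one-sided, so your interval constraints need auxiliary parameters outside $A$.) To handle constraints inside fibers you need some recursion into classes, and your sketch provides none; the closing appeal to ``distributivity together with Corollary \ref{cor:twocross} and Lemma \ref{lemma:nice lattice}'' does not supply it.

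For comparison, the paper avoids the order/convexity machinery entirely here and argues by induction on the height of $\Lambda$: given the type $p(x/A)$, pick $a\in A$ with $p\vdash xFa$ for a maximal $F$ and set $C=a/F$; use cross-cutting (Corollary \ref{cor:twocross}) to replace each parameter $a_i$ with $p\vdash d(x,a_i)=E_i$, $E_i$ incomparable with $F$, by a proxy $a_i'\in C\cap a_i/E_i$; then use distributivity (Proposition \ref{prop:distributivity}) and the amalgamation property of finite $\Lambda$-ultrametric spaces (Proposition \ref{prop:genericLambdaUltrametric}) to assign consistent distances $d(x,a_i')\leq E_i$, and recurse inside $C$, whose lattice has smaller height. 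That inductive descent into a class of a maximal relation is precisely the mechanism your one-shot, top-level reduction is missing; if you want to salvage your route, you would have to add an analogous recursion (and justify minimality/independence of the relevant restricted quotient orders within classes), at which point you are essentially reconstructing the paper's argument with extra overhead.
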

\begin{proof}
Let $\Lambda$ be the lattice of $\emptyset$-definable equivalence relations of $\Gamma$. Let $A \subset \Gamma$ be finite. We must realize any maximal quantifier-free 1-type $p(x/A)$, in the language of equivalence relations, that is consistent with the $\Lambda$-triangle inequality. (For the equivalence between being a \fraisse limit and satisfying such a 1-point extension property, see \cite{Hodges}*{\S 7.1}.) We proceed by induction on the height of $\Lambda$. The statement is trivial if $\Lambda$ has 2 elements.

Let $a \in A$ such that $p \proves xFa$ for some maximal $F \in \Lambda$, and let $C = a/F$. We now wish to inductively continue inside $C$, but might not have $A \subset C$. For every $E_i \in \Lambda$ such that $F$ and $E_i$ are incomparable and every $a_i \in A$ such that $p \proves d(x, a_i) = E_i$, we will find some $a'_i \in \Gamma$ such that $a'_i \in C \cap a_i/E_i$. Let $A'$ be $A$ with each $a_i$ replaced by $a'_i$. We then create a new type $p'(x/A')$ by choosing distances $d_i = d(a'_i, x) \leq E_i$ such that $p'$ is still consistent with the $\Lambda$-triangle inequality. By induction, there will be some element realizing $p'$, which will then also realize $p$.

For each $E_i$ incomparable to $F$, we have that $E_i$ and $F$ are cross-cutting by Corollary \ref{cor:twocross}. Thus for each $a_i$ such that $p \proves d(x, a_i) = E_i$, we have that $C \cap a_i/E_i \neq \emptyset$. We may pick any element of this intersection to be $a'_i$. 

We view the assignment of the distances $d_i$ as an amalgamation problem. The base is $A$, the first factor is $A \cup \set{x}$, and the second factor is $A \cup A'$. As $\Lambda$ is distributive by Proposition \ref{prop:distributivity}, we can apply Proposition \ref{prop:genericLambdaUltrametric} to complete the amalgamation diagram while respecting the $\Lambda$-triangle inequality. This forces $d_i \leq d(a'_i, a_i) \join d(a_i, x) = E_i$.
\end{proof}

\section{Classification}

All homogeneous finite-dimensional permutation structures are assumed to be presented in a language of equivalence relations and subquotient orders.

It is not immediate that the quotient of a homogeneous finite-dimensional permutation structure by a $\emptyset$-definable equivalence relation is again homogeneous. However, the next few results establish the Primitivity Conjecture (or actually something slightly stronger) when working in the quotient.

\begin{lemma} \label{lemma:genericquotient}
Let $\Gamma$ be a homogeneous permutation structure with lattice of $\emptyset$-definable equivalence relations $\Lambda$. Let $E,F \in \Lambda$ with $E < F$. Let $C$ be an $F$-class, and let $\leq_1, \dots, \leq_n$ be $\emptyset$-definable topologically primitive linear orders on $C/E$. Then $\leq_1, \dots, \leq_n$ are generic, modulo the agreement of certain orders up to reversal.
\end{lemma}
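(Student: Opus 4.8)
The plan is to run the proof of the Primitivity Conjecture (Theorem~\ref{thm:primitivity conj}) inside the $F$-class $C$ and its quotient $C/E$, using the quotient statement Lemma~\ref{lemma:toprank1} where the primitive case invokes Lemma~\ref{lemma:top prim top rank1}. Fixing a point of $C$ and recalling that $C$ is definable from it, the structure $C$ (with that point adjoined as a constant) is $\omega$-categorical, binary and homogeneous, and hence so is $C/E$ as far as $\omega$-categoricity goes; moreover $\Gamma$ is order-like and binary, and an order-like binary structure remains order-like over finite parameter sets, so $C$ and $C/E$ are order-like. Finally $C/E$ is transitive, being the image of the transitive set $C$ (on which the setwise stabilizer of $C$ in $\mathrm{Aut}(\Gamma)$ acts transitively). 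First I would discard orders so that no two of $\leq_1,\dots,\leq_n$ are equal or reverse to one another, reducing the problem to showing the survivors $\leq_1,\dots,\leq_m$ are generic.

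Next I would check each $\leq_i$ is minimal on $C/E$. Topological primitivity of $\leq_i$ means equality is the only proper $\leq_i$-convex $\emptyset$-definable equivalence relation on $C/E$; pulling this back, and using that the $\leq_i$-convex parameter-definable equivalence relations on $C$ are linearly ordered by refinement, one gets that $E$ itself is the coarsest proper $\leq_i$-convex such relation on $C$. Then Lemma~\ref{lemma:toprank1}, applied to $(C;\leq_i,\cdots)$, gives that $(C/E;\leq_i,\cdots)$ has topological rank~1; together with transitivity and Lemma~\ref{lemma:acl} (density and trivial algebraic closure), each $\leq_i$ is minimal.

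Then I would show no two distinct orders are intertwined, arguing as in Lemma~\ref{lemma:nocut}. Suppose for contradiction that for some $i\ne j$ there is a $\emptyset$-definable non-decreasing map $f\colon (C/E,\leq_i)\to\overline{(C/E,\leq_j)}$ (the case of the reverse of $\leq_j$ being symmetric). Lifting $f$ to a map on $C$, the value $f(a)$ is, for each $a\in C$, a cut of $(C,\leq_j)$ definable from $a$ together with the parameters defining $C$; since $E$ is the coarsest proper $\leq_j$-convex equivalence relation on $C$, Lemma~\ref{lemma:nocut} confines $f(a)$ to the $E$-class of $a$, so $f(a)$ must be the cut $\{x : x/E <_j a/E\}$. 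Monotonicity of $f$ with respect to $\leq_i$ then gives $a/E \leq_i b/E \Rightarrow a/E \leq_j b/E$, i.e., $\leq_i$ equals $\leq_j$ (both being linear), contradicting the reduction; the reverse case forces $\leq_i$ to be the reverse of $\leq_j$, also excluded. Hence $\leq_1,\dots,\leq_m$ are pairwise independent.

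To conclude, $(C/E;\leq_1,\dots,\leq_m)$ is $\omega$-categorical, transitive, with each $\leq_i$ a linear order of topological rank~1 and no two distinct orders intertwined, so Proposition~\ref{prop:determined reduct} says its reduct to $\{\leq_1,\dots,\leq_m\}$ is determined up to isomorphism by the pattern of equalities, reversals and independences among the orders, which here is: all independent. The \fraisse limit of all finite sets equipped with $m$ linear orders realizes the same pattern (it is $\omega$-categorical, transitive, and its orders are topologically primitive, hence of topological rank~1 by binariness and Lemma~\ref{lemma:top prim top rank1}, and pairwise independent), so the reduct is isomorphic to that limit, i.e., $\leq_1,\dots,\leq_m$ are generic; undoing the reduction, $\leq_1,\dots,\leq_n$ are generic modulo the agreement of certain orders up to reversal. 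I expect the main obstacle to be the bookkeeping hidden in the minimality step: checking that passing to $C$ over a parameter preserves the binariness and order-likeness required by Lemmas~\ref{lemma:toprank1} and~\ref{lemma:nocut}, and especially establishing that the $\leq_i$-convex parameter-definable equivalence relations on $C$ form a chain, so that the given $E$ really is the coarsest proper $\leq_i$-convex one. Once that is in hand, the rest is a faithful transcription of the primitive case.
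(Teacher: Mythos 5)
Your proposal is correct and follows essentially the same route as the paper's own (much terser) proof: apply Lemma~\ref{lemma:toprank1} to get topological rank~1 of $(C/E;\leq_i)$, use Lemma~\ref{lemma:nocut} to rule out any intertwining between distinct orders (or an order and a reverse), and conclude by Proposition~\ref{prop:determined reduct}. The extra scaffolding you supply (minimality, the parameter/induced-structure point for $C$, matching the independence pattern of the \fraisse limit) is just an expansion of what the paper leaves implicit; note also that for applying Lemmas~\ref{lemma:toprank1} and~\ref{lemma:nocut} one only needs $E$ to be maximal among proper $\leq_i$-convex $\emptyset$-definable relations on $C$, which is immediate from topological primitivity of $\leq_i$ on $C/E$, so the chain condition you flag as the main obstacle is not actually needed.
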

\begin{proof}
By Lemma \ref{lemma:toprank1}, $C/E$ has topological rank 1 with respect to each $\leq_i$. By Proposition \ref{prop:determined reduct}, it suffices to show that no order is intertwined with another, or its reverse.

Suppose $\leq_i$ is intertwined with $\leq_j$ via some intertwining map $f$. Then, given any $a \in C/E$, $f$ produces an $a$-definable cut in $\leq_j$. If $\leq_i, \leq_j$ are not equal, there must be some $a \in C/E$ for which $f(a) \neq a$. But this contradicts Lemma \ref{lemma:nocut}.
\end{proof}

\begin{definition}
Let $\Gamma$ be a homogeneous finite-dimensional permutation structure, $<$ a subquotient order on $\Gamma$, and $E$ a $\emptyset$-definable equivalence relation. Then the \textit{restriction of $<$ to $E$}, when defined, is the subquotient order $< \upharpoonright_E$ with top relation $E$ given by $x < \upharpoonright_E y$ iff $(xEy) \wedge (x < y)$.
\end{definition}

\begin{lemma} \label{lemma:equalrestrictions}
Let $\Gamma$ be a homogeneous finite-dimensional permutation structure, with lattice of $\emptyset$-definable equivalence relations $\Lambda$. Let $<_i, <_j$ be subquotient orders with bottom relation $E \in \Lambda$ and top relations $F_i, F_j \in \Lambda$, respectively, with $F_i \leq F_j$. Assume $<_i, <_j$ are convex with respect to no $\emptyset$-definable equivalence relations between $E$ and $F_i$.

 Let $G \in \Lambda$ with $E < G < F_i$. If ${<_i \upharpoonright_{G}} = {<_j \upharpoonright_{G}}$, then ${<_i} = {<_j \upharpoonright_{F_i}}$.
\end{lemma}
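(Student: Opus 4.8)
The plan is to localize to a single $F_i$-class and there realize the discrepancy between $<_i$ and $<_j\upharpoonright_{F_i}$ as a $\emptyset$-definable $\leq_i$-convex equivalence relation lying in $\Lambda$ strictly between $E$ and $F_i$ — which the hypothesis forbids.

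First I would fix an $F_i$-class $C$, naming it by a parameter so that $(C;\cdots)$ and $(C/E;\cdots)$ are $\omega$-categorical and binary. Writing $<'_j$ for $<_j\upharpoonright_{F_i}$ read inside $C$, both $<_i$ and $<'_j$ are linear orders of $C/E$, and they agree inside every $G$-class: restricting $<_j$ first to $F_i$ and then to $G$ is just $<_j\upharpoonright_G = <_i\upharpoonright_G$ (by hypothesis), while $<_i\upharpoonright_{F_i}=<_i$. Since $<_i$ is convex with respect to no $\emptyset$-definable equivalence relation strictly between $E$ and $F_i$, Lemma~\ref{lemma:toprank1} (or Lemma~\ref{lemma:top prim top rank1}), together with Lemma~\ref{lemma:acl} and transitivity of $C/E$ over $C$, gives that $(C/E;<_i,\cdots)$ is minimal, and likewise for $<'_j$. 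The goal is thereby reduced to: $<_i$ and $<'_j$ coincide on $C/E$.

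Next I would introduce the agreement relation $\sim$ on $C/E$, declaring $x\sim y$ iff $<_i$ and $<'_j$ induce one and the same linear order on $I_{<_i}(x,y)\cup I_{<'_j}(x,y)$, the union of the closed intervals between $x$ and $y$ in the two orders. A short check shows $\sim$ is an equivalence relation, convex for $<_i$ (and for $<'_j$), and that $x\sim y$ forces both $I_{<_i}(x,y)=I_{<'_j}(x,y)$ and $x<_i y\iff x<'_j y$. Globalizing, the relation on $\Gamma/E$ given by ``$d(x,y)\leq F_i$, and inside the common $F_i$-class $<_i$ and $<_j$ induce the same order on $I_{<_i}(x,y)\cup I_{<_j}(x,y)$'' is built from $<_i,<_j,E,F_i$, hence $\emptyset$-definable; it is an equivalence relation and pulls back to a member $\theta\in\Lambda$ with $E\leq\theta\leq F_i$, and $<_i$ is $\theta$-convex. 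By the convexity hypothesis $\theta\in\{E,F_i\}$. If $\theta=F_i$ then $\sim$ is total on each $F_i$-class, i.e. $<_i=<_j\upharpoonright_{F_i}$, the desired conclusion. Everything therefore reduces to excluding $\theta=E$, i.e. to showing $\sim$ is not equality on $C/E$.

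Excluding $\theta=E$ is the heart of the matter and the step I expect to be the main obstacle. The point is that although $<_i$ and $<'_j$ agree inside each $G$-class, $<_i$ is not $G$-convex, so a $G$-class need not be an interval and one cannot simply deduce $G\leq\theta$. Instead, assuming $\sim$ were equality, one starts from $x<_i y$ (hence $x<'_j y$) in a common $G$-class $D$ and uses the minimality of $(C/E,<_i)$ and $(C/E,<'_j)$ — via the density of realizations of types in minimal orders (Proposition~\ref{prop:dense definable}) and the control of parameter-definable cuts of the kind obtained in the proof of Lemma~\ref{lemma:nocut} — to slide $x$ and $y$ together inside $D$ until the open $<_i$- and $<'_j$-intervals between them coincide and carry the same order; this yields $x\neq y$ with $x\sim y$, contradicting that $\sim$ is equality. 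Heuristically: any incompatibility in the way $<_i$ and $<'_j$ interleave the $G$-classes is itself definable and would manufacture a forbidden convex equivalence relation between $E$ and $F_i$. Once $\theta=E$ is ruled out, $\theta=F_i$, and $<_i=<_j\upharpoonright_{F_i}$.
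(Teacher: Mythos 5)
Your reduction to the agreement relation is set up carefully, but the step you yourself flag as ``the heart of the matter'' --- excluding $\theta=E$ by sliding $x$ and $y$ together inside a $G$-class until their $<_i$- and $<'_j$-intervals coincide --- cannot be carried out, and in fact the intended configuration provably does not exist. Suppose $<_i\,\neq\,<_j\upharpoonright_{F_i}$ on $C/E$. Then there is a complete $2$-type $p(u,v)$ (over the parameter naming $C$, which may be treated as a constant) with $p\vdash u<_i v$ and $p\vdash v<'_j u$. Under your own running hypotheses $(C/E;<_i,\cdots)$ is minimal, so Proposition \ref{prop:dense definable} lets you realize $p$ with $u,v$ placed in any two prescribed open $<_i$-intervals; taking both intervals inside $(x,y)_{<_i}$ shows that \emph{every} nondegenerate $<_i$-interval contains a disagreement pair. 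Hence for every pair of distinct $E$-classes $x\neq y$ we have $x\not\sim y$: your relation $\sim$ is equality whenever the two orders differ at all, no matter how close $x$ and $y$ are taken inside a common $G$-class. So the dichotomy $\theta\in\{E,F_i\}$ gives no contradiction by itself ($\theta=E$ is perfectly consistent with the convexity hypothesis), and the ``incompatibility in the way $<_i$ and $<'_j$ interleave the $G$-classes'' is not definably visible as a convex equivalence relation strictly between $E$ and $F_i$.

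What is actually needed, and what the paper uses, is the trichotomy for distinct minimal definable orders rather than a convexity argument: by Lemma \ref{lemma:genericquotient} (resting on Proposition \ref{prop:determined reduct} and the intertwining analysis), if $<_i$ and $<_j\upharpoonright_{F_i}$ were distinct on $C/E$ they would have to be independent (agreement inside $G$-classes already rules out reversal), and then Lemma \ref{lemma:ind over par} localizes this independence to the infinite set $P$ of realizations of $\tp(b/a)$ for $b\in a/G\setminus\{a\}$ (infinite by Lemma \ref{lemma:acl}); since $P$ lies in a single $G$-class, the two orders agree on $P$, contradicting independence over the parameter $a$. Your proposal never invokes independence or Lemmas \ref{lemma:genericquotient} and \ref{lemma:ind over par}, and the tools you cite for the missing step (Proposition \ref{prop:dense definable} and the cut analysis behind Lemma \ref{lemma:nocut}) work against you rather than for you, as the density argument above shows. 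So the proof as proposed has a genuine gap at its decisive step, and that step needs to be replaced by the independence argument rather than repaired within your framework.
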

\begin{proof}
Suppose not. Let $C$ be an $F_i$-class. By Lemma \ref{lemma:genericquotient}, $<_i$ and $<_j$ are independent on $C/E$. Let $a \in C/E$ and $A = a/G$. Let $b \in C/E \cap (A \bs \set{a})$, $p = \tp(b/a)$, and $P \subset C/E$ the realizations of $p$. Then $P$ is infinite, as $\acl(a) = \set{a}$ by Lemma \ref{lemma:acl}, and so $<_i$ and $<_j$ are independent on $P$ by Lemma \ref{lemma:ind over par}. However, this is a contradiction as $P$ lies within a single $G$-class, so ${<_i} = {<_j}$ on $P$. 
\end{proof}

\begin{corollary} \label{corollary:homquotient}
Let $\Gamma$ be a homogeneous finite-dimensional permutation structure, with lattice of $\emptyset$-definable equivalence relations $\Lambda$. Let $E$ be meet-irreducible in $\Lambda$ and $E^+$ its unique cover. Let $C$ be an $E^+$-class, and consider $C/E$ equipped with the restriction to $E^+$ of every subquotient order with bottom relation $E$. If none of the original subquotient orders are equal up to reversal to any restriction of another, then $C/E$ is generic. 
\end{corollary}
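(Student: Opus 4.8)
The plan is to deduce the statement from Lemma~\ref{lemma:genericquotient} applied with $F = E^+$. First I would note that the restrictions are well-defined linear orders on the set $C/E$: if $<_i$ is a subquotient order with bottom relation $E$ and top relation $F_i$, then $F_i > E$, hence $F_i \geq E^+$ since $E^+$ is the unique cover of $E$, so ${<_i}\upharpoonright_{E^+}$ is defined and, because $C$ is a single $E^+$-class, it induces an honest linear order of $C/E$. Write ${<_1}\upharpoonright_{E^+}, \dots, {<_m}\upharpoonright_{E^+}$ for the orders so obtained.

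The first substantive step is to check that each reduct $(C/E; {<_i}\upharpoonright_{E^+})$ is topologically primitive; I would in fact show $C/E$ carries no proper nontrivial $\emptyset$-definable equivalence relation whatsoever. Since $\Gamma$ is presented in a language of equivalence relations and subquotient orders, a single element carries no structure, so $\Gamma$ is transitive and $\mathrm{Aut}(\Gamma)$ acts transitively on $E^+$-classes. Given an equivalence relation $R$ on $C/E$ invariant under $\mathrm{Stab}_{\mathrm{Aut}(\Gamma)}(C)$, lift it to the $\mathrm{Stab}(C)$-invariant $E$-coarsening $\tilde R$ of the induced equivalence relation on $C$, and then transport $\tilde R$ to every $E^+$-class by the transitive action; this yields a $\emptyset$-definable equivalence relation $R^{\sharp}$ of $\Gamma$ with $E \leq R^{\sharp} \leq E^+$. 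As $E^+$ covers $E$, either $R^{\sharp} = E$, in which case $R$ is equality, or $R^{\sharp} = E^+$, in which case $R$ is the trivial relation. So each ${<_i}\upharpoonright_{E^+}$ is topologically primitive, and Lemma~\ref{lemma:genericquotient} applies, showing that the orders ${<_1}\upharpoonright_{E^+}, \dots, {<_m}\upharpoonright_{E^+}$ on $C/E$ are generic, modulo the agreement of some of them up to reversal.

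It remains to rule out any such agreement, and this is where the hypothesis is used. Suppose ${<_i}\upharpoonright_{E^+}$ equals ${<_j}\upharpoonright_{E^+}$ or its reverse for some $i \neq j$; reversing $<_j$ if necessary (which keeps it a subquotient order from $E$ to $F_j$) we may assume ${<_i}\upharpoonright_{E^+} = {<_j}\upharpoonright_{E^+}$, and say $F_i \leq F_j$. These two relations agree on the $E^+$-class $C$, hence on all of $\Gamma$ by transitivity on $E^+$-classes. If $F_i = E^+$ then $<_i = {<_i}\upharpoonright_{E^+} = {<_j}\upharpoonright_{E^+}$ is equal, up to reversal, to a restriction of the original $<_j$, contradicting the hypothesis. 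If $F_i > E^+$, then $G := E^+$ satisfies $E < G < F_i$ and ${<_i}\upharpoonright_{G} = {<_j}\upharpoonright_{G}$, so Lemma~\ref{lemma:equalrestrictions} gives ${<_i} = {<_j}\upharpoonright_{F_i}$, once more equal up to reversal to a restriction of the original $<_j$ and so again contradicting the hypothesis. Thus the $m$ orders on $C/E$ are pairwise distinct and pairwise non-reverse, and $C/E$ is the \fraisse limit of all finite $m$-dimensional permutation structures, i.e.\ generic.

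I expect two points to need care. The routine one is making the transport argument of the second paragraph precise: checking that $R^{\sharp}$ is well-defined, is genuinely an equivalence relation, and is $\mathrm{Aut}(\Gamma)$-invariant --- all of which follow from transitivity of $\mathrm{Aut}(\Gamma)$ on $E^+$-classes. The real obstacle is verifying that Lemma~\ref{lemma:equalrestrictions} is applicable in the last paragraph, i.e.\ that neither $<_i$ nor $<_j$ is convex with respect to a $\emptyset$-definable equivalence relation strictly between $E$ and $F_i$; this should follow from the subquotient orders of our presentation being in reduced form --- any such convexity would let one split the order at the intermediate relation --- but this normalization must be invoked or re-derived in the present setting.
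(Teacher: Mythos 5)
Your proof follows the paper's own route: Lemma \ref{lemma:genericquotient} gives genericity of $C/E$ modulo the agreement of some orders up to reversal, and Lemma \ref{lemma:equalrestrictions} (with $G=E^{+}$) rules out any such agreement via the hypothesis on restrictions --- this is exactly the paper's two-step argument, with your primitivity check (lifting an invariant relation on $C/E$ to an element of $\Lambda$ between $E$ and its cover) and your reversal case analysis merely filling in details the paper leaves implicit. The convexity normalization you flag at the end is indeed required for Lemma \ref{lemma:equalrestrictions} and is precisely the standing presentation assumption the paper itself invokes (explicitly in Lemma \ref{lemma:1pointextension} and in the proof of the final theorem), so your argument is correct in the same sense and to the same degree of rigor as the paper's.
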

\begin{proof}
By Lemma \ref{lemma:genericquotient}, $C/E$ is generic modulo the agreement of certain orders up to reversal. By Lemma \ref{lemma:equalrestrictions}, as none of the original subquotient orders were equal up to reversal, none of the restricted subquotient orders are either. 
\end{proof}

The structures in the catalog have subquotient orders with only meet-irreducible bottom relations. The next few results show we may ignore the possibility of subquotient orders with meet-reducible bottom relations.




\begin{lemma} \label{lemma:0to1}
Let $\Gamma$ be a homogeneous finite-dimensional permutation structure with lattice of $\emptyset$-definable equivalence relations $\Lambda$. Let $<$ be a subquotient order from $E$ to $F$, convex with respect to no intermediate $\emptyset$-definable equivalence relation. Let $E = G_1 \meet G_2$. Then $F \not \geq G_1 \join G_2$.
\end{lemma}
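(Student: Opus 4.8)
The plan is to argue by contradiction. Suppose $F \geq G_1 \join G_2$. We may assume $G_1, G_2 > E$ (if, say, $G_1 \leq G_2$ then $E = G_1 \meet G_2 = G_1$, and there is nothing to decompose); this forces $G_1$ and $G_2$ to be incomparable. First I would localize: fix a $(G_1 \join G_2)$-class $C$. Since $G_1 \join G_2 \leq F$ and $C$ lies inside a single $F$-class, the subquotient order $<$ restricts to a genuine linear order on $C/E$, and $\Gamma \upharpoonright C$ is again homogeneous, transitive, order-like, binary, and NIP. As in the proof of Lemma~\ref{lemma:genericquotient} (via Lemma~\ref{lemma:toprank1} and Lemma~\ref{lemma:finite rank}, with Lemma~\ref{lemma:acl} for density), and using that by hypothesis no equivalence relation strictly between $E$ and $F$ is $<$-convex, I would verify that $(C/E; <)$ is a \emph{minimal} definable linear order: transitivity gives weak transitivity for free, and a parameter-definable $<$-convex equivalence relation on $C/E$ with infinitely many infinite classes would lift to a forbidden tower in the binary structure $\Gamma \upharpoonright C$.

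Next, let $\bar G_1, \bar G_2$ be the equivalence relations induced on $C/E$ by $G_1, G_2$; these are $\emptyset$-definable in $\Gamma \upharpoonright C$, with $\bar G_1 \meet \bar G_2$ equal to equality (since $E = G_1 \meet G_2$) and $\bar G_1 \join \bar G_2 = \bbone$ (since $C$ is a single $(G_1 \join G_2)$-class). By Corollary~\ref{cor:twocross}, $\bar G_1$ and $\bar G_2$ are cross-cutting, so the map $\phi\colon C/E \to (C/G_1) \times (C/G_2)$, $x \mapsto (x/G_1, x/G_2)$, is a $\emptyset$-definable bijection; in particular, for every $u \in C/G_1$ and $v \in C/G_2$ there is a unique $E$-class $\phi^{-1}(u,v)$ contained in both.

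The heart of the argument is an application of Proposition~\ref{prop:trivial acl} inside $M := \Gamma \upharpoonright C$ (passing to $M^{\mathrm{eq}}$ for the quotient sorts). Take a thorn-independent pair $(a,b)$ in $M \times M$ (Remark~\ref{rem:trivial acl}(1)); since $E$, $G_1$, $G_2$ all have infinitely many classes on $C$, Remark~\ref{rem:trivial acl}(2) gives $a/E \neq b/E$, $a/G_1 \neq b/G_1$, $a/G_2 \neq b/G_2$, so $(a/E, b/E)$ is a ``diagonal'' pair in $C/E$. Let $p$ be its complete type and define $f\colon p(M \times M) \to C/E$ by $f(x,y) = \phi^{-1}(x/G_1, y/G_2)$; this is a $\emptyset$-definable function into the minimal linear order $C/E$, everywhere defined on $p(M\times M)$ by cross-cutting. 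Proposition~\ref{prop:trivial acl} then forces $\gamma := f(a,b) \in \dcl(a) \cup \dcl(b)$. But $\gamma/G_1 = a/G_1$ and $\gamma/G_2 = b/G_2$, so $\gamma \in \dcl(a)$ yields $b/G_2 \in \dcl(a)$, and $\gamma \in \dcl(b)$ yields $a/G_1 \in \dcl(b)$. In either case, an element of an infinite transitive quotient ($\Gamma/G_2$ or $\Gamma/G_1$) lies in the $\dcl$ of a single member of a thorn-independent pair (using Remark~\ref{rem:trivial acl}(3)), hence is algebraic over $\emptyset$ by a standard property of thorn-independence together with triviality of algebraic closure on such quotients — a contradiction, which completes the proof.

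I expect the main obstacle to be the verification that $(C/E; <)$ really is a minimal definable linear order: one must take care that $<$ is here a \emph{subquotient} order rather than one of the defining orders, that the $<$-convex $\emptyset$-definable equivalence relations on $C/E$ correspond precisely to the intermediate equivalence relations excluded by hypothesis, and that the topological-rank-$1$ argument of Lemma~\ref{lemma:toprank1} transfers through the quotient. A secondary point is to make the application of Proposition~\ref{prop:trivial acl} to the imaginary sort $C/E$ fully rigorous.
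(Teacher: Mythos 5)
Your proposal is correct and takes essentially the same route as the paper's proof: localize to a single $(G_1 \join G_2)$-class, note that $<$ induces a minimal linear order on $C/E$ via Lemma~\ref{lemma:toprank1} and the no-intermediate-convexity hypothesis, use cross-cutting (Corollary~\ref{cor:twocross}) to define the $\emptyset$-definable ``intersection'' map into this order, and apply Proposition~\ref{prop:trivial acl} to a thorn-independent pair. The only difference is cosmetic: the paper works with a pair of classes and says directly that the intersection class lies in neither $\dcl(a_1/G_1)$ nor $\dcl(a_2/G_2)$, whereas you work with a pair of points and derive the contradiction from $b/G_2 \in \dcl(a)\cap\dcl(b)$ via anti-reflexivity of thorn-independence (which indeed follows from Remark~\ref{rem:trivial acl}(2),(3) once one notes the quotient is infinite and transitive).
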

\begin{proof}
Suppose $F \geq G_1 \join G_2$. Define $\Gamma'$ to be $\Gamma/E$ restricted to a single $G_1 \join G_2$-class. Then $<$ is a transitive linear order on $\Gamma'$, and by Lemma \ref{lemma:toprank1} has topological rank 1, and thus is minimal. Choose a thorn-independent pair $a_1, a_2 \in \Gamma'$. By Remark \ref{rem:trivial acl}(2), $d(a_1, a_2) = G_1 \join G_2$. Also, by Remark \ref{rem:trivial acl}(3), $(a_1/G_1,a_2/G_2)$ is a thorn-independent pair. But $a_1/G_1 \cap a_2/G_2 \in \Gamma'$ is neither in $\dcl(a_1/G_1)$ nor in $\dcl(a_2/G_2)$, contradicting Proposition \ref{prop:trivial acl}.
\end{proof}

\begin{lemma} \label{lemma:nameclass}
Let $\Gamma$ be a homogeneous finite-dimensional permutation structure, $E$ a $\emptyset$-definable equivalence relation, and $C_1, C_2 \subset \Gamma$ two $E$-classes. Then $C_1$ remains homogeneous after expanding the language by a unary predicate naming $C_2$. 
\end{lemma}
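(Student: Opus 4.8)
The plan is to reduce the statement to the single fact that the substructure of $\Gamma$ induced on an $E$-class is itself homogeneous, the point being that a unary predicate naming $C_2$ carries no information once we restrict attention to $C_1$.

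First I would fix notation: let $P$ be the unary predicate picking out $C_2$, so that we are working in $(\Gamma, P)$ and must show that the structure this induces on $C_1$ is homogeneous. That induced structure is exactly $\Gamma \upharpoonright C_1$ together with the set $C_2 \cap C_1$. Since $C_1$ and $C_2$ are $E$-classes, they are either equal or disjoint, so $C_2 \cap C_1$ is either all of $C_1$ (the always-true predicate) or empty (the always-false predicate). In either case $P$ is interpreted on $C_1$ as a constant predicate, which changes neither the finite partial automorphisms nor the automorphisms of the structure; hence it suffices to prove that $\Gamma \upharpoonright C_1$ is homogeneous.

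For that, I would take a finite partial isomorphism $\sigma \colon X \to Y$ with $X, Y \subseteq C_1$. Since $\Gamma \upharpoonright C_1$ is in the language of $\Gamma$ and $\sigma$ only constrains relations among the finitely many points of $X$, the map $\sigma$ is also a finite partial isomorphism of $\Gamma$, so by homogeneity of $\Gamma$ it extends to some $\tau \in \mathrm{Aut}(\Gamma)$. If $X \neq \emptyset$, pick $x \in X$; then $\tau(x) = \sigma(x) \in C_1$, and since $E$ is $\emptyset$-definable, $\tau$ permutes the $E$-classes, so $\tau(C_1) = \tau(x/E) = \tau(x)/E = C_1$. Therefore $\tau \upharpoonright C_1$ is an automorphism of $\Gamma \upharpoonright C_1$ extending $\sigma$; if $X = \emptyset$ the identity works. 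This shows $\Gamma \upharpoonright C_1$ is homogeneous and, with the reduction above, completes the proof.

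The argument is short, and the only subtlety — a minor one — is the first reduction: one must note precisely that naming $C_2$ adds nothing genuine to $C_1$ because distinct $E$-classes are disjoint, so that the whole statement collapses to the observation that an $E$-class inherits homogeneity from $\Gamma$, which in turn uses only that an automorphism of $\Gamma$ preserving some point of $C_1$ must preserve $C_1$ setwise.
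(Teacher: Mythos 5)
Your reduction in the first paragraph is where the proof fails: the claim that ``$P$ is interpreted on $C_1$ as a constant predicate, which changes neither the finite partial automorphisms nor the automorphisms'' misreads what the lemma asserts. The relevant object is not the trace $P\cap C_1$ (which is indeed trivially empty or all of $C_1$), but the structure that naming $C_2$ induces on $C_1$ through the ambient structure: once $C_2$ is named, an element $x\in C_1$ acquires new definable properties, such as its position relative to $C_2$ under the various orders (e.g.\ ``$x\leq_i c$ for every $c\in C_2$''), and homogeneity of $C_1$ in the expanded language means that finite partial automorphisms of $C_1$ extend to automorphisms of $(\Gamma,P)$ stabilizing $C_1$ --- equivalently, to automorphisms of $\Gamma$ fixing $C_2$ setwise. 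That is exactly how the lemma is used later: in Lemma \ref{lemma:0toE} one needs an automorphism sending $x_1$ to $y_1$ \emph{and leaving $C_2$ invariant}. The automorphism $\tau\in\mathrm{Aut}(\Gamma)$ you obtain by extending $\sigma$ fixes $C_1$ setwise but may move $C_2$ anywhere, and its restriction to $C_1$ need not preserve the induced relations just described; so your argument proves only a trivialized statement and leaves the genuine content of the lemma untouched.

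What the paper actually establishes is that for every finite $A\subset C_1$ (the nontrivial case being $C_1\neq C_2$; the case $C_1=C_2$ is the easy one your argument does handle) there is $c\in C_2$ having the same type over every $a\in A$. Then $\sigma\cup\{(c,c)\}$ is a finite partial automorphism of $\Gamma$, its extension fixes $c$ and hence fixes $C_2=c/E$ setwise, which gives the required extension. Producing such a $c$ is the real work: one passes to the cover $F^+$ of the maximal $F\geq E$ separating $C_1$ from $C_2$, discards the orders handled by an intermediate convex equivalence relation, and uses Corollary \ref{cor:dense class} together with Proposition \ref{prop:product definable} (as in the proof of Lemma \ref{lemma:refineconvex}) to find $c\in C_2$ lying $\leq_i$-above all of $A$ simultaneously for all remaining orders. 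None of this can be bypassed by the observation that distinct $E$-classes are disjoint.
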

\begin{proof}
Let $A \subset C_1$ be finite. It suffices to find some $c \in C_2$ such that $c$ has the same type over every $a \in A$, as we may then always extend any finite partial isomorphism of $C_1$ to one fixing $c$.

Let $F \geq E$ be maximal such that $C_1, C_2$ lie in distinct $F$-classes. Note $F$ must be meet-irreducible, so let $F^+$ be its cover. Let $C$ be the $F^+$-class of $C_1$, which is also the $F^+$-class of $C_2$ and we now work in $C$. So we may assume $F^+ = \bbone$. Let $C'_2$ be the $F$-class of $C_2$. 

We now move to the language of linear orders. If $\leq_i$ is an order for which there is $\leq_i$-convex $G$ with $E \leq G \leq F$, then any $x \in C'_2$ has the same $\leq_i$-type over every $a \in A$. Enumerate the remaining orders---those for which there is no such $G$---as $\leq_1, \ldots, \leq_m$. For each $i\leq m$, let $E_i \in \Lambda$ be maximal $\leq_i$-convex below $F^+$ and let $V_i = (C/E_i; \leq_i)$. By Corollary \ref{cor:dense class}, $C'_2$ projects densely onto each $V_i$.

We now work inside $C'_2$. Let $F_i \in \Lambda$ be the maximal $\leq_i$-convex relation below $F$. Then $E$ does not refine $F_i$, as we ignored orders where that would be the case. Now let $W_i = (C'_2/F_i; \leq_i)$, and $X_1, \ldots, X_k$ representatives of the $W_i$'s up to monotonic bijection. Let $C_2^* \subset \prod X_i$, where each element of $C_2^*$ is the tuple of projections onto each $X_i$ of an element of $C_2$. Then the same argument as in Lemma \ref{lemma:refineconvex} gives that $C_2^*$ is dense in the product $\prod_i X_i$ equipped with the product topology. Namely, by Corollary \ref{cor:dense class}(1), $C_2$ is dense in each $X_i$. As each $X_i$ is minimal, Proposition \ref{prop:product definable} gives density in $\prod_i X_i$. By the last sentence of the previous paragraph, each $W_i$ contains a point $\leq_i$-greater than all of $A$. Thus we may find some $c \in C_2$ that is $\leq_i$-greater than all of $A$ for each $i$.
\end{proof}

For the next lemma, note that if $<'$ is a subquotient order in a generic $\Lambda$-ultrametric space from $F$ to $F \join G$, then $<' \upharpoonright_G$ is a subquotient order from $F \meet G$ to $G$ (see \cite{BraunThesis}*{Lemma 3.4.7}).

\begin{lemma} \label{lemma:0toE}
Let $\Gamma$ be a homogeneous finite-dimensional permutation structure with lattice of $\emptyset$-definable equivalence relations $\Lambda$. Let $E \in \Lambda$ be meet-reducible, and $<$ a subquotient order from $E$ to $G$, convex with respect to no intermediate $\emptyset$-definable equivalence relations. Then there exists some $F>E$ and subquotient order $<'$ from $F$ to $F \join G$ such that ${<} = {<' \upharpoonright_G}$.

Furthermore, $<$ and $<'$ are interdefinable.
\end{lemma}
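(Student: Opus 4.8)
The strategy is to find the relation $F$ directly inside $\Lambda$ and then produce $<'$ as the order induced by $<$ on a suitable quotient. First I would locate the right candidate for $F$. Since $E$ is meet-reducible, write $E = G_1 \meet G_2$ with $G_1, G_2 > E$; by Lemma \ref{lemma:0to1} we know $G \not\geq G_1 \join G_2$, so at least one of $G_1, G_2$ — say $G_1$ — satisfies $G \not\geq G_1$, i.e. $F := G_1$ is a relation strictly above $E$ with $F \not\leq G$. Set $F' := F \join G$. The plan is then to show that $<$ restricted appropriately defines a subquotient order $<'$ from $F$ to $F'$ and conversely that $<'$ restricted to $G$ recovers $<$; the "furthermore" clause will follow because each will be visibly quantifier-free definable from the other.

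The key technical point is the following: inside a single $F'$-class $C$, the quotient $C/E$ carries the transitive linear order $<$, and I want to show that $<$ is $F$-convex on $C/E$, i.e. each $F$-class (which lies inside $C$ since $F \leq F'$) is an interval. Here is where the hypotheses combine. If $<$ were $F$-convex this is exactly what is needed: then $<$ descends to a linear order $<'$ on $C/F$, which ranges over all of $C/F$ and hence is a subquotient order from $F$ to $F'$, and moreover $<' \upharpoonright_G$ agrees with $<$ because within a $G$-class the $F$-class structure is trivial (one uses $F \meet G = E$, which holds because $E = G_1 \meet G_2 \leq G_1 \meet G = F \meet G$ and $F \meet G \leq G_1 = F$... actually one must check $F \meet G = E$, which follows since $E \leq F \meet G \leq G_1 \meet G_2 \meet G$ is not quite right — rather $F \meet G \geq E$, and if $F \meet G > E$ then $<$ would be convex with respect to the intermediate relation $F \meet G$, contradicting the hypothesis that $<$ is convex with respect to no intermediate $\emptyset$-definable equivalence relation; so indeed $F \meet G = E$). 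So the whole lemma reduces to establishing $F$-convexity of $<$ on $C/E$.

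To prove $F$-convexity I would argue by contradiction using the rank-1 machinery, exactly in the spirit of Lemma \ref{lemma:equalrestrictions} and Corollary \ref{cor:dense class}. Work in $\Gamma' := \Gamma/E$ restricted to a single $F'$-class; by Lemma \ref{lemma:toprank1} this is a minimal (transitive, topological rank 1) linear order under $<$. The relation $F$ on $\Gamma'$ is a $\emptyset$-definable equivalence relation with infinitely many infinite classes. Since $<$ is convex with respect to no intermediate relation, $F$ is in particular not $<$-convex; combining with minimality and Corollary \ref{cor:dense class}, each $F$-class of $\Gamma'$ must intersect a dense set of "positions", and then two elements of a single $F$-class can be separated by an element of another $F$-class. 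The contradiction is then extracted as in Lemma \ref{lemma:equalrestrictions}: pick $a$ and $b$ in the same $F$-class of $\Gamma'$ with $b$ in the orbit $p := \tp(b/a)$; the realization set $P$ of $p$ is infinite (as $\acl(a) = \set{a}$ by Lemma \ref{lemma:acl}), lies in a single $F$-class, yet by $F$-non-convexity one can also find a realization of $p$ outside the $<$-interval cut out by the $F$-class of $a$, contradicting that $P$ is contained in one $F$-interval if $<$ were convex — more carefully, the contradiction is with order-likeness of $\Gamma$ exactly as in the proof of Lemma \ref{lemma:nocut}. I expect this convexity argument to be the main obstacle: it requires carefully checking that the non-convexity of $<$ with respect to intermediate relations does force $F$ (which is one particular intermediate relation, sitting between $E$ and $F'$ but not below $G$) to behave non-convexly in a way that contradicts minimality plus order-likeness, and getting the quantifiers for "dense set of $F$-classes" right. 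Once convexity is in hand, the construction of $<'$ and the interdefinability are routine: $<'$ is $<$ read off between $F$-classes, and $<$ is recovered from $<'$ and from $<\upharpoonright_{?}$ — in fact since $F \meet G = E$, the order $<$ within a $G$-class is determined by $<'$ on the $F$-classes met by that $G$-class together with nothing else, because each $G$-class meets each $F$-class in at most one $E$-class; so $<$ and $<'$ are quantifier-free interdefinable.
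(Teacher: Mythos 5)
There is a genuine gap, and it lies exactly at what you identify as the key technical point. Your plan treats $<$ as a transitive \emph{linear} order on $C/E$ for an $F'$-class $C$ (with $F' = F \join G$), and then invokes Lemma \ref{lemma:toprank1} and the rank-1/independence machinery to prove ``$F$-convexity''. But $<$ is a subquotient order from $E$ to $G$: it only compares $E$-classes lying in a common $G$-class, and since $F \not\leq G$ we have $G \lneq F'$, so $<$ is emphatically not a linear order on $C/E$ and Lemma \ref{lemma:toprank1} does not apply to it there. Worse, once $F \meet G = E$ holds, each $F$-class meets each $G$-class in a single $E$-class, so the ``$F$-convexity'' you aim to prove is vacuous and carries no information. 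The real content of the lemma is a coherence statement \emph{across} $G$-classes: if $x_1,y_1$ lie in one $F$-class, $x_2,y_2$ in another, with $x_1 G x_2$ and $y_1 G y_2$, then $x_1 < x_2$ iff $y_1 < y_2$; only this makes the induced relation on $C/F$ well defined. Your sketch never addresses this, and the machinery you propose (minimality of $<$ on $C/E$, density of $F$-classes, the argument of Lemma \ref{lemma:equalrestrictions}) is not set up to. The paper proves coherence by a quite different, softer argument: assuming a failure with $F$-classes $C_1, C_2$, it uses Lemma \ref{lemma:nameclass} (an $F$-class stays homogeneous after naming another $F$-class) to build an automorphism sending $x_1 \mapsto y_1$ while fixing $C_2$ setwise; since $F \meet G = E$, the pair $(x_1, C_2)$ determines $x_2/E$, so the automorphism carries $x_1 < x_2$ to $y_1 < y_2$, a contradiction.

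A secondary problem is your choice of $F$ and the claim $F \meet G = E$. Taking an arbitrary decomposition $E = G_1 \meet G_2$ and choosing $F = G_1$ with $G \not\geq G_1$ does not guarantee $F \meet G = E$ (e.g.\ in a Boolean lattice with atoms $a,b,c$, take $E = \bbzero$, $G = a \join b$, $G_1 = a \join c$, $G_2 = b$: then $G_1 \meet G = a > E$). Your patch --- ``if $F \meet G > E$ then $<$ would be convex with respect to $F \meet G$'' --- is a non sequitur; nothing forces convexity with respect to that intermediate relation. The correct move, and the paper's, is to use Lemma \ref{lemma:0to1} to see that $G$ lies above at most one cover of $E$; since $E$ is meet-reducible it has at least two covers, so one can take $F$ to be a cover of $E$ with $F \not\leq G$, and then $E = F \meet G$ is automatic. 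Your remarks on interdefinability at the end are essentially fine (restriction in one direction, and an existential definition using cross-cutting, i.e.\ Corollary \ref{cor:twocross}, in the other), but the existence of $<'$ itself is not established by your argument.
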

\begin{proof}
By Lemma \ref{lemma:0to1}, $G$ cannot be above two covers of $E$, so we can find $F \in \Lambda$ such that $E = F \meet G$. Suppose the first part of the lemma is false for this $F$. Then there exist $F$-classes $C_1, C_2$ and $x_i, y_i \in C_i$ such that $x_1 < x_2$ and $y_1 > y_2$. In particular, $x_1Gx_2$ and $y_1Gy_2$, so $x_1,y_1$, and $C_2$ determine $x_2/E$ and $y_2/E$. We wish to produce an automorphism of $C_1$ sending $(x_1/E, x_2/E)$ to $(y_1/E, y_2/E)$, which will yield a contradiction. It suffices to produce an automorphism sending $x_1$ to $y_1$ and leaving $C_2$ invariant. By Lemma \ref{lemma:nameclass}, there is such an automorphism, and we are finished.

For the last part, first note any projection of $<'$ is $\emptyset$-definable from $<'$. For the other direction, we have that $F$ and $G$-classes within the same $(F \join G)$-class are cross-cutting by Corollary \ref{cor:twocross}, so we may define $x <' y \iff \exists z ((xFz) \wedge (z < y))$.
\end{proof}

\begin{corollary} \label{corollary:no meet red}
Let $\Gamma$ be a homogeneous finite-dimensional permutation structure with lattice of $\emptyset$-definable equivalence relations $\Lambda$. Let $<$ be a subquotient order with a meet-reducible bottom relation, convex with respect to no intermediate $\emptyset$-definable equivalence relations. Then $<$ is interdefinable with some subquotient order $<'$ with a meet-irreducible bottom relation.
\end{corollary}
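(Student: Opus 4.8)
The plan is to reduce to Lemma~\ref{lemma:0toE}, but applied with the element $F$ occurring there chosen to be meet-irreducible; the substance of the argument is in showing that such a choice is available. Write $E$ and $G$ for the bottom and top relations of $<$, and assume $E<G$ (otherwise $<$ is trivial). Since $<$ is convex with respect to no intermediate $\emptyset$-definable equivalence relation, Lemma~\ref{lemma:0to1} applies to it, so $G\not\geq G_1\join G_2$ whenever $E=G_1\meet G_2$; in particular $G$ lies above at most one cover of $E$, since two distinct covers of $E$ meet to $E$.

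The key step is the lattice-theoretic claim that there is a meet-irreducible $F\in\Lambda$ with $E<F$ and $F\meet G=E$. Let $F_1,\dots,F_k$ be the meet-irreducibles of $\Lambda$ lying above $E$; since $E$ is meet-reducible, $k\geq2$, each $F_i>E$, and $\bigwedge_i F_i=E$. Suppose for contradiction that $F_i\meet G>E$ for every $i$. Pick a cover $E^{(1)}$ of $E$ with $E^{(1)}\leq F_1\meet G\leq G$. From $\bigwedge_i(F_i\meet G)=(\bigwedge_i F_i)\meet G=E$ we cannot have $E^{(1)}\leq F_j\meet G$ for every $j$, so fix $j$ with $E^{(1)}\not\leq F_j\meet G$ and pick a cover $E^{(2)}\leq F_j\meet G\leq G$; then $E^{(1)}\neq E^{(2)}$. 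Thus $G$ lies above two distinct covers of $E$, contradicting the previous paragraph. Hence some $F:=F_{i_0}$ satisfies $F\meet G=E$, and this $F$ is meet-irreducible with $E<F$.

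Finally, the argument proving Lemma~\ref{lemma:0toE} uses nothing about its chosen $F$ beyond $F\in\Lambda$ and $E=F\meet G$ (whence also $F>E$, as $E<G$): it assumes the sought $<'$ does not exist, extracts two $F$-classes inside a common $(F\join G)$-class with incompatible $<$-behaviour, uses $E=F\meet G$ to see that each such $F$-class meets each $G$-class of that $(F\join G)$-class in a single $E$-class, and then invokes Lemma~\ref{lemma:nameclass} and Corollary~\ref{cor:twocross} to reach a contradiction (the latter also yielding interdefinability). Running this argument with the meet-irreducible $F$ produced above gives a subquotient order $<'$ from $F$ to $F\join G$, interdefinable with $<$ and with ${<}={<'\upharpoonright_G}$; since $F$ is meet-irreducible, $<'$ is the desired subquotient order. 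I expect the lattice step to be the main obstacle --- in particular, checking that ``convex with respect to no intermediate equivalence relation'' is precisely what licenses Lemma~\ref{lemma:0to1} for the pairs of covers used --- while the only bookkeeping nuisance is that we cite the \emph{proof} of Lemma~\ref{lemma:0toE} rather than its statement, which one could instead absorb by stating that lemma for an arbitrary $F$ with $E=F\meet G$.
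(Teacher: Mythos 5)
Your proof is correct, and it takes a somewhat different route from the paper. The paper disposes of the corollary in two lines by iterating Lemma \ref{lemma:0toE}: each application moves the bottom relation strictly upward in the finite lattice $\Lambda$, and since the maximal elements of $\Lambda$ are meet-irreducible the process terminates. You instead do everything in one step: your lattice argument (writing $E$ as the meet of the meet-irreducibles above it, and using Lemma \ref{lemma:0to1} to forbid two distinct covers of $E$ below $G$) produces a meet-irreducible $F$ with $F \meet G = E$, and you then run the proof of Lemma \ref{lemma:0toE} with that $F$. Your key observation --- that the proof of Lemma \ref{lemma:0toE} uses nothing about its $F$ beyond $\emptyset$-definability and $E = F \meet G$ --- is accurate: $F$ enters only through the identification of $C_2 \cap x_1/G$ as a single $E$-class (which is exactly $E = F \meet G$), through Lemma \ref{lemma:nameclass}, and through Corollary \ref{cor:twocross}, none of which require $F$ to cover $E$; also note that any $F_{i_0}$ with $F_{i_0}\meet G = E < G$ is automatically a proper element of $\Lambda$, so it genuinely is one of the catalog's meet-irreducibles. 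What your route buys: it avoids the iteration, and in particular avoids the check (left implicit in the paper) that the hypothesis ``convex with respect to no intermediate equivalence relation'' persists for the new order $<'$ at each step --- this does hold, via a convexity-transfer argument together with distributivity of $\Lambda$, but it is not verified there. What it costs is only the bookkeeping you flag yourself: either restate Lemma \ref{lemma:0toE} for an arbitrary $F$ with $E = F \meet G$, or cite its proof rather than its statement.
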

\begin{proof}
Starting with $<$, iteratively apply Lemma \ref{lemma:0toE} until a subquotient order with meet-irreducible bottom relation is produced. This must eventually happen, as at each step the bottom relation moves upward in $\Lambda$, and the maximal elements of $\Lambda$ are meet-irreducible.
\end{proof}

Finally, we establish that the subquotient orders are generic by proving a suitable 1-point extension property.

\begin{lemma} \label{lemma:1pointextension}
Let $\Gamma$ be a homogeneous finite-dimensional permutation structure. Suppose every subquotient order of $\Gamma$ has a meet-irreducible bottom relation, and that no one is equal up to reversal to the restriction of another. Then the subquotient orders of $\Gamma$ are generic.
\end{lemma}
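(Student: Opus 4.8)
The plan is to verify that $\Gamma^*$ --- meaning $\Gamma$ together with its subquotient orders, regarded as a structure in the language of Theorem \ref{theorem:amalg} --- is the \fraisse limit of a class $\AA^*$ as in that theorem, by checking the one-point extension property (cf.\ \cite{Hodges}*{\S 7.1}). Concretely: fix a finite $A \subseteq \Gamma^*$ and a quantifier-free $1$-type $p(x/A)$ realized by some one-point extension of $A$ inside $\AA^*$ --- equivalently, a $p$ whose distance data obeys the $\Lambda$-triangle inequality and whose order data obeys the partial-order axioms for each subquotient order $<_i$ (so $x/E_i$ is declared comparable to $a/E_i$ exactly when $p$ puts them in a common $F_i$-class) --- and realize $p$ in $\Gamma^*$. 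Note that by Proposition \ref{prop:homreduct} the reduct of $\Gamma$ to the equivalence-relation language is already generic, so the distance part of $p$ alone can be realized; the content of the lemma is to realize the distances and the subquotient-order positions \emph{simultaneously}. We argue by induction on the height of $\Lambda$.

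In the base case $\Lambda = \set{\bbzero,\bbone}$ the structure $\Gamma$ is primitive and every subquotient order is a linear order; the hypothesis that no subquotient order equals another up to reversal then says these $n$ linear orders are pairwise distinct up to reversal, so Theorem \ref{thm:primitivity conj} identifies $\Gamma$ with the \fraisse limit of all finite sets carrying $n$ linear orders, which is exactly the \fraisse limit of $\AA^*$ in this case.

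For the inductive step we split $p$ into a ``within a class'' part and a ``top'' part. If $p$ forces $d(x,a_0) < \bbone$ for some $a_0 \in A$, fix a coatom $F$ of $\Lambda$ with $d(x,a_0) \leq F$ and put $C = a_0/F$; the structure induced on $C$ is homogeneous, its lattice of $\emptyset$-definable equivalence relations is the strictly shorter interval $[\bbzero,F]$, and --- after using Corollary \ref{corollary:no meet red} to replace the induced subquotient orders on $C$ by interdefinable ones with meet-irreducible bottom relations, and using Corollary \ref{corollary:homquotient} together with Lemma \ref{lemma:equalrestrictions} to see that none of them equals another up to reversal --- the induction hypothesis applies to $C$. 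One then transports $p$ to a type over a finite $A' \subseteq C$ exactly as in the proof of Proposition \ref{prop:homreduct}: each $a \in A$ whose prescribed distance $d(x,a)$ is incomparable to $F$ is replaced by a point $a' \in C \cap (a/d(x,a))$, which is nonempty since $d(x,a) \join F = \bbone$ (as $F$ is a coatom) makes $d(x,a)$ and $F$ cross-cutting by Corollary \ref{cor:twocross}, and the prescribed distances are adjusted by an amalgamation in the class of $\Lambda$-ultrametric spaces (Proposition \ref{prop:genericLambdaUltrametric}). In this situation every subquotient order relating $x$ to such an $a$ has top relation $\leq F$, so the constraint survives the move to $a'$, and a realization of the transported type inside $C$ given by induction realizes the part of $p$ supported on points in the same $F$-class as $x$. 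It remains to handle the constraints coming from points $a\in A$ that $p$ forces into a different $F$-class than $x$ --- whose only effect on $x$ is through subquotient orders with top relation $\bbone$ --- together with the residual case where $p$ forces $d(x,a)=\bbone$ for every $a\in A$: for these we use genericity of the equivalence-relation reduct (Proposition \ref{prop:homreduct}) to place $x$ in a \emph{fresh} $F$-class and then, for each subquotient order $<_i$ with $F_i=\bbone$, position $x/E_i$ correctly relative to all the $a/E_i$; since the relevant quotient orders are of topological rank $1$ and pairwise independent (Lemma \ref{lemma:genericquotient}, Corollary \ref{cor:dense class}(2)), the product-density statement Proposition \ref{prop:product definable} --- applied as in the proofs of Lemma \ref{lemma:refineconvex} and Lemma \ref{lemma:nameclass} --- lets us meet all of these order constraints at once. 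Finally one checks that the ``within $C$'' realization and the ``top'' placement pin down disjoint pieces of the structure (the finer coordinates inside $C$, respectively the coarse position of $x/E_i$ in each $\Gamma/E_i$), so the two partial specifications combine into a single realization of $p$.

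The step I expect to be the main obstacle is precisely this last compatibility: carrying out the ``within a class'' refinement and the ``top-level'' placement \emph{simultaneously} rather than sequentially, since fixing $x$'s coarse position to satisfy the distance-$\bbone$ and $F_i=\bbone$ order constraints a priori risks leaving too little room to also realize the finer subquotient-order positions inside each smaller class, and conversely. Controlling this requires combining, in one amalgamation, the genericity of the equivalence-relation reduct (Proposition \ref{prop:homreduct}), the pairwise independence of the quotient orders at each level (Corollary \ref{cor:dense class}(2), Lemma \ref{lemma:genericquotient}), and the product-density of $\emptyset$-definable closed sets (Proposition \ref{prop:product definable}); the bookkeeping is essentially that of Lemma \ref{lemma:refineconvex} and Lemma \ref{lemma:nameclass}, now iterated down the lattice $\Lambda$.
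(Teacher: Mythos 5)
Your proposal has a genuine gap, and it is exactly at the point you flag as "the main obstacle": the claim that the within-class realization and the top-level placement "pin down disjoint pieces of the structure" is false in general, and nothing in the proposal repairs it. If $<_i$ is a subquotient order whose bottom relation $E_i$ lies below your chosen coatom $F$ but whose top relation $F_i$ does not (the most basic example being a plain linear order, i.e.\ a subquotient order from $\bbzero$ to $\bbone$, which is present in essentially every case), then $<_i$ simultaneously constrains the fine coordinate $x/E_i$ relative to points \emph{inside} $C=a_0/F$ and relative to points \emph{outside} $C$. Realizing the transported type inside $C$ by induction produces a specific element of $C$, and there is no reason it satisfies the deferred cross-class $<_i$-constraints; conversely, fixing the cross-class constraints first leaves you with a dense-type argument you have not carried out. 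Your auxiliary claim that points in a different $F$-class affect $x$ "only through subquotient orders with top relation $\bbone$" is also false once $\Lambda$ has incomparable coatoms (one only gets $F_i\not\leq F$), and the sentence asserting that constraints to points $a$ with $d(x,a)\not\leq F$ "survive the move to $a'$" is unjustified: replacing $a$ by $a'\in C\cap a/d(x,a)$ changes $a/E_i$ whenever $E_i < d(x,a)$, so order constraints do not transport the way distance constraints do in Proposition \ref{prop:homreduct}. The base case via Theorem \ref{thm:primitivity conj} is fine, but the inductive step as written does not go through.

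For comparison, the paper does not induct on the height of $\Lambda$ and does not split the type by a coatom. Instead it eliminates the order constraints altogether: processing the relevant meet-irreducible bottom relations $E_k$ in an order in which no earlier one is strictly below a later one, it replaces \emph{all} conditions involving subquotient orders with bottom relation $E_k$ by a single new condition $xE_kb_k$, where $b_k$ is found by intersecting the prescribed $<_{k,j}$-intervals inside $C_{E_k^+}/E_k$ — nonempty by the density statement Corollary \ref{cor:dense class}(1) and by genericity of that quotient (Corollary \ref{corollary:homquotient}) — and consistency of the resulting purely metric type is checked using distributivity, meet-irreducibility of $E_k$, and cross-cutting (Corollary \ref{cor:twocross}). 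The final type, now solely in the language of equivalence relations, is realized by Proposition \ref{prop:homreduct} and implies the original $p$. Some mechanism of this kind — trading order constraints for membership in a well-chosen class, one bottom relation at a time — is what your sketch is missing; the tools you list (Proposition \ref{prop:product definable}, Lemma \ref{lemma:genericquotient}) are relevant, but invoking them "in one amalgamation" is not an argument.
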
 
   \begin{proof}
   We assume that no subquotient orders are convex with respect to any intermediate $\emptyset$-definable equivalence relation. Let $\Lambda$ be the lattice of $\emptyset$-definable equivalence relations of $\Gamma$.
   
   Let $A \subset \Gamma$ be finite, and $p(x)$ a complete quantifier-free 1-type over $A$ in the language of equivalence relations and subquotient orders, such that its restriction to the language of equivalence relations and any single subquotient order is consistent. To satisfy Definition \ref{def:generic}, it suffices to show $p(x)$ is realized. (For the equivalence between being a \fraisse limit and satisfying such a 1-point extension property, see \cite{Hodges}*{\S 7.1}.) Our plan will be to produce a consistent type $q(x)$ solely in the language of $\emptyset$-definable equivalence relations such that $q(x) \implies p(x)$. As the reduct to the language of $\emptyset$-definable equivalence relations is generic, we may then realize $q(x)$.
   
   We will produce a finite sequence of types $p_0(x), \dots, p_{\ell}(x) = q(x)$ such that the reduct of each to the language of $\emptyset$-definable equivalence relations is consistent, $p_i(x) \implies p_{i-1}(x)$, and $p_0(x) \implies p(x)$. Let $p_0(x)$ be $p(x)$, but removing any condition $x <_{i,j} a$ or $x >_{i,j} a$ for any $a \in A$ and any subquotient order $<_{i,j}$ with bottom relation $E_i$ such that there is some $b \in A$ such that $p \proves x E_i b$. By the consistency conditions on $p$, $p_0(x) \implies p(x)$.
   
   Let $E_1, \dots, E_\ell \in \Lambda$, ordered such that if $i < j$ then $E_i \not< E_j$, be the meet-irreducibles of $\Lambda$ such that there is some subquotient order $<_{i,j}$ with bottom relation $E_i$ and some $a_i \in A$ such that $p_0 \proves a_i <_{i,j} x$ or $p_0 \proves x <_{i,j} a_i$. Given $p_{k-1}$, we will produce $p_k$ by removing any condition $x <_{k,j} a$ or $x >_{k,j} a$ for any $a \in A$ and any subquotient order $<_{k,j}$ with bottom relation $E_k$, then finding a suitable $b_k \in \Gamma$ and adding $xE_k b_k$.
   
   Suppose we have found $p_{k-1}$.  Enumerate the subquotient orders with bottom relation $E_k$ as $<_i$ for $i \leq n$, let $F_i$ be the top relation of $<_i$, and let $E^+_k$ be the cover of $E_k$. As the reduct of $\Gamma$ to the language of equivalence relations is generic, the corresponding reduct of $p_{k-1}(x)$ is realized. Pick a realization, and let $C_{E_k^+}$ be its $E^+_k$-class. As $C_{E_k^+}$ is homogeneous, $C_{E_k^+}/E_k$ is generic by Corollary \ref{corollary:homquotient}. 
    
    For each $i \leq n$, $p_{k-1}(x)$ restricts $x/E_k$ to a $<_i$-interval $J_i$ with endpoints in $A \cup \set{\pm \infty}$. It also restricts $x$ to lie in a given $F_i$-class $C_{F_i}$, with $C_{F_i}$ containing the endpoints of $J_i$. By the consistency of the reduct of $p_{k-1}(x)$ to the language of $\emptyset$-definable equivalence relations, we have $C_{F_i} = C_{E_k^+}/F_i$.
    
     Let $I_i = C_{E_k^+}/E_k \cap J_i$, so $I_i$ is a $<_i$-interval in $C_{E_k^+}/E_k$. Working within $C_{F_i}/E_k$, since $<_i$ is not convex with respect to any intermediate $\emptyset$-definable equivalence relations, we may apply Corollary \ref{cor:dense class}(1) (where $E$ and $F$ there are $E_k$ and $E^{+}_k$ here, respectively) to get that $C_{E_k^+}/E_k$ is $<_i$-dense in $C_{F_i}/E_k$, and so $I_i\neq \emptyset$. By genericity of $C_{E_k^+}/E_k$, $\bigcap I_i \neq \emptyset$. Let $y \in \bigcap I_i$ be an $E_k$-class not among the finitely many $p_{k-1}(x)$ specifies are to be avoided. Then we may take $b_k \in \Gamma$ to be any element of $y$.
     
     We now show the reduct of $p_k(x)$ to the language of $\emptyset$-definable equivalence relations is consistent. Suppose the reduct of $p_{k-1}(x)$ forces $x$ to be in some $G_i$-class $C_{G_i}$, for some $G_i \in \Lambda$. Then for each such $i$, $G_i \not< E_k$ by assumption, so $\bigwedge G_i \not< E_k$ since $E_k$ is meet-irreducible and $\Lambda$ is distributive, so $((\bigwedge G_i) \join E_k) \geq E^+_k$. As $C_{E_k^+}$ is the $E^+_k$-class of some realization of $p_{k-1}$, we have that the $((\bigwedge G_i) \join E_k)$-class of $C_{E_k^+}$ equals the $((\bigwedge G_i) \join E_k)$-class of $\bigcap C_{G_i}$. Then by Corollary \ref{cor:twocross}, $\bigcap C_{G_i}$ meets any $E_k$-class in $C_{E_k^+}$, and so meets $b_k/E_k$.
   \end{proof}

\begin{theorem}
Let $\Gamma$ be a homogeneous finite-dimensional permutation structure. Then there is a finite distributive lattice $\Lambda$ such that $\Gamma$ is interdefinable with an expansion of the generic $\Lambda$-ultrametric space by generic subquotient orders, such that every meet-irreducible of $\Lambda$ is the bottom relation of some subquotient order.
\end{theorem}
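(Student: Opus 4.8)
The plan is to assemble the structural results of the preceding two sections. Since $\Gamma$ is a finite-dimensional permutation structure, it is interdefinable with a structure presented in a language $\set{\leq_1,\ldots,\leq_n}$ of finitely many linear orders, and I would work with such a presentation. Let $\Lambda$ be its lattice of $\emptyset$-definable equivalence relations; $\Lambda$ is finite, and distributive by Proposition \ref{prop:distributivity}. By Proposition \ref{prop:homreduct} the reduct of $\Gamma$ to the language of $\emptyset$-definable equivalence relations is exactly the generic $\Lambda$-ultrametric space, so it remains only to re-present the orders of $\Gamma$, interdefinably and over that reduct, by subquotient orders that are generic in the sense of Definition \ref{def:generic} and whose bottom relations run over all meet-irreducibles of $\Lambda$.

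First I would decompose each $\leq_i$. The $\emptyset$-definable $\leq_i$-convex equivalence relations form a chain $\bbzero = E_i^0 < E_i^1 < \cdots < E_i^{m_i} = \bbone$ in $\Lambda$, since incomparable relations cannot both be convex with respect to the same order (as in the proof of Corollary \ref{cor:dim bound}); iterating the interdefinable exchange described in \S2 replaces $\leq_i$ by subquotient orders from $E_i^k$ to $E_i^{k+1}$ for $k < m_i$, none of which is convex with respect to an intermediate $\emptyset$-definable equivalence relation. Carrying this out for all $i$ yields an interdefinable presentation $\Gamma_1$ of $\Gamma$ by subquotient orders. Now by Lemma \ref{lemma:intersectionconvex} every meet-irreducible $E$ of $\Lambda$ is $\leq_i$-convex for some $i$, hence equals some $E_i^k$; and since $E \neq \bbone$ we have $k < m_i$, so $E$ is the bottom relation of a subquotient order of $\Gamma_1$.

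Next I would normalize. Apply Corollary \ref{corollary:no meet red} to each subquotient order of $\Gamma_1$ with meet-reducible bottom relation (legitimate, since by the previous paragraph each such order is convex with respect to no intermediate $\emptyset$-definable equivalence relation), replacing it by an interdefinable subquotient order with meet-irreducible bottom relation; the subquotient orders witnessing the meet-irreducibles of $\Lambda$ already have meet-irreducible bottom relation, so they are untouched and every meet-irreducible still occurs as a bottom relation. Then delete any subquotient order that is equal, up to reversal, to the restriction of another: such a restriction has the same bottom relation as the order it restricts, so no meet-irreducible is lost as a bottom relation. The outcome is a presentation $\Gamma^*$, interdefinable with $\Gamma$, whose equivalence-relation reduct is the generic $\Lambda$-ultrametric space, in which every subquotient order has meet-irreducible bottom relation, no subquotient order is equal up to reversal to the restriction of another, and every meet-irreducible of $\Lambda$ occurs as a bottom relation. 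By Lemma \ref{lemma:1pointextension} the subquotient orders of $\Gamma^*$ are generic, and since $\Lambda$ is finite distributive, $\Gamma^*$ is exactly the presentation asserted by the theorem.

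The substantive content lies in the results cited above — principally Lemma \ref{lemma:1pointextension}, which itself depends on Corollary \ref{corollary:homquotient}, Lemma \ref{lemma:0toE}, the convexity results of \S4, and the model-theoretic input of \S3 (especially Propositions \ref{prop:determined reduct}, \ref{prop:trivial acl}, and \ref{prop:product definable}) — so I expect no new obstacle in the deduction itself. The one point requiring care is the bookkeeping of the third paragraph: checking that the two normalization steps — raising meet-reducible bottom relations via Corollary \ref{corollary:no meet red}, and deleting restriction-redundant orders — can be carried to completion while jointly securing both hypotheses of Lemma \ref{lemma:1pointextension} and preserving that every meet-irreducible of $\Lambda$ is a bottom relation. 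Everything else is a direct appeal to the stated results.
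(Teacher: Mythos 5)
Your proposal is correct and follows essentially the same route as the paper's own proof: establish distributivity (Proposition \ref{prop:distributivity}) and genericity of the equivalence-relation reduct (Proposition \ref{prop:homreduct}), normalize the presentation so that every subquotient order has meet-irreducible bottom relation (Corollary \ref{corollary:no meet red}), is non-convex with respect to intermediate relations, and is not a restriction of another up to reversal, invoke Lemma \ref{lemma:intersectionconvex} to see every meet-irreducible occurs as a bottom relation, and finish with Lemma \ref{lemma:1pointextension}. The only difference is that you spell out the decomposition of each $\leq_i$ along its chain of convex relations, which the paper subsumes under its standing convention that structures are presented in the language of equivalence relations and subquotient orders together with its ``we may assume'' normalization.
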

\begin{proof}
Let $\Lambda$ be the lattice of $\emptyset$-definable equivalence relations of $\Gamma$. Then $\Lambda$ is distributive by Proposition \ref{prop:distributivity}, and the reduct of $\Gamma$ to the language of $\emptyset$-definable equivalence relations gives the generic $\Lambda$-ultrametric space by Proposition \ref{prop:homreduct}.

We may assume $\Gamma$ is presented in a language such that no subquotient order is equal up to reversal to the restriction of another, nor convex with respect to any intermediate $\emptyset$-definable equivalence relation. By Corollary \ref{corollary:no meet red}, we may also assume every subquotient order has a meet-irreducible bottom relation. By Lemma \ref{lemma:intersectionconvex}, every meet-irreducible is the bottom relation of some subquotient order. By Lemma \ref{lemma:1pointextension}, all subquotient orders are generic.
\end{proof}

\subsection*{Acknowledgements}

We would like to thank Gregory Cherlin for looking over some early forms of these results. The first author further thanks Gregory Cherlin for introducing him to this problem, and for many discussions on and around it during the his thesis work. The second author would  like to thank David Bradley-Williams, who brought this problem to his attention.
We thank the referee for helpful comments.


\end{document}